\numberwithin{equation}{section}
\theoremstyle{plain}
\newtheorem{theorem}{Theorem}[section]	
\newtheorem{lemma}{Lemma}[section]
\newtheorem{corollary}{Corollary}[section]
\newtheorem{proposition}{Proposition}[section]
\theoremstyle{definition}
\newtheorem{remark}{Remark}[section]
\DeclareMathOperator{\Hess}{Hess}
\newcommand{\eps}{\varepsilon}
\newcommand{\R}{\mathbb{R}}
\renewcommand{\qed}{\hfill{\tiny \ensuremath{\blacksquare} }}%
\renewcommand{\Pr}{{ \mathrm{I\!P}}}
\newcommand{\Ep}{{ \mathrm{I\!E}}}
\renewcommand{\Hess}{\nabla^2}
\def\be#1{\begin{equation*}#1\end{equation*}}
\def\ben#1{\begin{equation}#1\end{equation}}
\def\besn#1{\begin{equation}\begin{split}#1\end{split}\end{equation}}
\def\bm#1{\begin{multline*}#1\end{multline*}}
\def\bmn#1{\begin{multline}#1\end{multline}}
\def\ba#1{\begin{align*}#1\end{align*}}
\def\ban#1{\begin{align}#1\end{align}}
\begin{document}
\title[Nearly optimal high-dimensional CLT and Bootstrap]{Nearly optimal central limit theorem and bootstrap approximations in high dimensions}

\thanks{We are grateful to Xiaohong Chen, Xiao Fang, and Kengo Kato for helpful discussions.}

\author[Chernozhukov]{Victor Chernozhukov}
\author[Chetverikov]{Denis Chetverikov}
\author[Koike]{Yuta Koike}

\address[V. Chernozhukov]{
Department of Economics and Center for Statistics \& Data Science, MIT, 50 Memorial Drive, Cambridge, MA 02142, USA.
}
\email{vchern@mit.edu}

\address[D. Chetverikov]{
Department of Economics, UCLA, Bunche Hall, 8283, 315 Portola Plaza, Los Angeles, CA 90095, USA.
}
\email{chetverikov@econ.ucla.edu}

\address[Y. Koike]{
Mathematics and Informatics Center and Graduate School of Mathematical Sciences, The University of Tokyo, 3-8-1 Komaba, Meguro-ku, Tokyo 153-8914, Japan.
}
\email{kyuta@ms.u-tokyo.ac.jp}

\date{\today.}

\begin{abstract}
In this paper, we derive new, nearly optimal bounds for the Gaussian approximation to scaled averages  of $n$ independent high-dimensional centered random vectors $X_1,\dots,X_n$ over the class of rectangles in the case when the covariance matrix of the scaled average is non-degenerate. In the case of bounded $X_i$'s, the implied bound for the Kolmogorov distance between the distribution of the scaled average and the Gaussian vector takes the form $$C (B^2_n \log^3 d/n)^{1/2} \log n,$$ where $d$ is the dimension of the vectors and $B_n$ is a uniform envelope constant on components of $X_i$'s. This bound is sharp in terms of $d$ and $B_n$, and is nearly (up to $\log n$) sharp in terms of the sample size $n$.  In addition, we show that similar bounds hold for the multiplier and empirical bootstrap approximations. Moreover, we establish bounds that allow for unbounded $X_i$'s, formulated solely in terms of moments of $X_i$'s.  Finally, we demonstrate that the bounds can be further improved in some special smooth and zero-skewness cases.
\end{abstract}

\maketitle

\section{Introduction}
Let $X_1,\dots,X_n$ be a sequence of centered independent random vectors in $\mathbb R^d$. Denote
$$
W := \frac{1}{\sqrt n}\sum_{i=1}^n X_i
$$
and let $\mathcal R$ be the class of rectangles in $\mathbb R^d$, which we choose to be sets of the form $A=\prod_{j=1}^d(a_j,b_j]$ for some $-\infty\leq a_j\leq b_j\leq \infty$, $j=1,\dots,d$. In this paper, we are interested in deriving new bounds on
\begin{equation}\label{eq: varrho}
\varrho :=\sup_{A\in\mathcal R}|\Pr (W\in A)-\Pr (Z\in A)|,\quad Z\sim N(0,\Sigma_W),
\end{equation}
where $\Sigma_W := \Ep [WW^T]$. We are particularly interested in the high-dimensional case, where $d$ is potentially much larger than $n$.

The problem of bounding $\varrho$ has attracted considerable attention in the literature because the class of rectangles $\mathcal R$ strikes an interesting balance: it is large enough so that bounds on $\varrho$ are useful in mathematical statistics, and, at the same time, it is small enough so that, as $n\to\infty$, under minimal conditions, we have $\varrho=\varrho_n\to0$ even if $d=d_n\to\infty$ much faster than $n\to\infty$, as was originally shown in \cite{CCK13}, making bounds on $\varrho$ particularly useful in high-dimensional statistics and machine learning, e.g.~in multiple hypothesis testing with the family-wise error rate control and in selecting penalty parameters for regularized estimators of high-dimensional models; see \cite{BCCHKN18} for details on these and other examples.

Various bounds on $\varrho$ and on closely related quantities were derived in \cite{CCK13, CCK14, CCK17, ZW17, DZ17, ZC18, OST18, K19, CCKK19, FK20, L20, KR20, D20, DL20} but in our discussion, we only focus on the results that are particular relevant for comparisons with our results. In addition, for clarity of the introduction, we assume below that components of $X_i$'s are uniformly bounded by the envelope constant $B_n = B_n (d)$, i.e. $\|X_i\|_\infty:=\max_{1\leq j\leq d}|X_{ij}| \leq B_n$, for all $i=1,\dots,n$, even though all aforementioned papers, as well as ours, considered the case of unbounded $X_i$'s as well. It then follows from \cite{CCKK19} that
\begin{equation}\label{eq: previous bound}
\varrho \leq C\left( \frac{ B^2_n ( \log(d n))^5}{n}\right)^{1/4},
\end{equation}
where $C>0$ is a constant that is independent of $n$ and $d$. This bound is conjectured to be near-optimal when $\Sigma_W$ is unrestricted.

Next, \cite{FK20} demonstrated that if, in addition, we assume that all eigenvalues of  $\Sigma_W$ are bounded below from zero (strongly non-degenerate case, in their terminology), then the bound \eqref{eq: previous bound} can be substantially improved: they showed that
\begin{equation}\label{eq: previous bound 2}
\varrho \leq C\left( \frac{B^2_n(\log(d n))^4}{n}\right)^{1/3}
\end{equation}
in this case. Moreover, they established that this bound can be further improved to the near-sharp $n^{-1/2} \log n$ and the sharp $(\log d)^3$ dependencies but only for the case of jointly log-concave $X_i$'s. Their results exploit the implicit smoothing that occurs when $\Sigma_W$ is strongly non-degenerate. Further, \cite{L20} and \cite{KR20} demonstrated, again in the strongly non-degenerate case, that
\begin{equation}\label{eq: previous bound 3}
\varrho 
\leq C\left( \left( \frac{B_n^6 (\log d)^4 \log(d n)}{n} \right)^{1/2}+ 
\left( \frac{(\log d)^{7} \log(d n)}{n} \right)^{1/2} \right) \log n,
\end{equation}
which nearly matches the dependence on $n$ in the classical Berry-Esseen bound for the one-dimensional ($d=1$) case, e.g.~Theorem 2.2.15 in \cite{T12}, but does not provide optimal dependence on $B_n = B_n(d)$ and $\log d$.

In this paper, our main result is to establish that in the strongly non-degenerate case, \begin{equation}\label{eq: our main bound introduction}
\varrho \leq C\left(\frac{B^2_n (\log d)^3}{n}\right)^{1/2}\log n,
\end{equation}
which we show to be optimal up to the $\log n$ factor, i.e. in general
\begin{equation}
\varrho \geq c\left( \frac{B^2_n(\log d)^3}{n}\right)^{1/2}.
\end{equation}
In addition, we extend this result to allow for unbounded $X_i$'s, which yields a bound depending solely on some moments of $X_i$'s. 
A critical ingredient in our proofs is an explicit use of smoothing, combined with the previous implicit smoothing ideas, as we further comment on below.

Our result (\ref{eq: our main bound introduction}) strongly improves bounds \eqref{eq: previous bound 2} and \eqref{eq: previous bound 3}, and features the optimal dependence on the ambient dimension $d$, the optimal dependence on the envelope constant $B_n$,
and a nearly optimal dependence on the sample size $n$ (up to the $\log n$ factor).  This result improves  over  \eqref{eq: previous bound 3} by replacing
$(\log d)^8$ by the optimal $(\log d)^3$ and replacing $B_n^6$ by the optimal $B^2_n$. The first improvement is particularly important 
when $\log d$ is growing as some fractional power of the sample size $n$, in which case  our bound (\ref{eq: our main bound introduction})  has much better dependence on $n$.
The second improvement  is important 
when the envelope constant $B_n$ is increasing with the sample size $n$, in which case  our bound (\ref{eq: our main bound introduction})  also has much better dependence on $n$. This, for example, occurs in the many local means settings arising in nonparametric statistics, discussed in detail in Section 5, where the envelope constant $B_n = B_n(d)$ has dependency on the dimension of problem $d$ of the form $B_n(d) \propto \sqrt{d}$. In particular, the bound \eqref{eq: previous bound 3} would then require $d^3 \ll n$ for $\varrho \to 0$, whereas our bound would only require $d \ll n$. Therefore, the improvement
is critical for obtaining the sharp dependency on the dimension $d$ in general. Also, as discussed in Section 5, in the many local means setting, our bound tends to be either at least as sharp (up to log factors) or much sharper than the Gaussian approximation based on the Hungarian coupling.  

Moreover, we also consider bootstrap approximations, i.e. we derive bounds on
$$
\varrho^* := \sup_{A\in\mathcal R}|\Pr(Z\in A)-\Pr(W^*\in A\mid X_1,\dots,X_n)|,
$$
where $W^*$ denotes a bootstrap version of $W$. These approximations are important in mathematical statistics because they allow to estimate probabilities $\Pr(Z\in A)$, $A\in\mathcal R$, using random vectors $X_1,\dots,X_n$, which is useful when $\Sigma_W$ is unknown so that probabilities can not be calculated directly. For the multiplier and empirical bootstrap approximations, we derive bounds that are generally similar to those for the Gaussian approximation \eqref{eq: varrho}.

Finally, we show that the $\log n$ factor in \eqref{eq: our main bound introduction} can be removed if we assume that $X_i$'s have a Gaussian component, and we also show that if $X_i$'s have a Gaussian component and satisfy a zero-skewness condition, then
$$
\varrho \leq \frac{C(\log d)^2}{n}.
$$
This last bound substantially extends the results of \cite{DL20}, who showed that $\varrho\to 0$ if $(\log d)^2/n\to0$ in the case when $X_i$'s have independent components with vanishing odd moments.

Our results are built using the exchangeable pair approach coupled with the Slepian-Stein method and employ ideas of many authors, e.g.~\cite{B90, G91, AHT98, B03, ChMe08, ReRo09, CCK13, CCK17, FK20, L20, KR20}, but the key technical tool behind our results is a set of new smoothing inequalities, which we call mixed smoothing inequalities. Specifically, for any rectangle $A\in\mathcal R$, we first approximate the indicator of $A$  by a Lipschitz-smooth function and then approximate it further via convolution with a centered Gaussian distribution. Building on the results of \cite{B90, AHT98, FK20}, we then prove several bounds for sums of absolute values of partial derivatives of the resulting function, which play a crucial role in our derivations. 

This mixed smoothing turns out important for two reasons. First, smoothing via convolutions allows to obtain nearly optimal dependence on $n$, as demonstrated by \cite{G91} in the moderate-dimensional case and then by \cite{FK20, L20, KR20} in the high-dimensional case. Second, smoothing via Lipschitz-smooth functions allows to obtain optimal dependence on $d$, as follows from our results. Our approach here is inspired by \cite{B03}, who used related but different mixed smoothing to derive a Berry-Esseen bound with good dependence on $d$ for convex sets in the moderate-dimensional case.



 
The rest of the paper is organized as follows. In the next section, we consider Gaussian approximations and derive various bounds on $\varrho$. In Section \ref{sec: bootstrap approximation}, we derive bounds for bootstrap approximations. In Section \ref{sec: special cases}, we discuss special cases, where bounds for the Gaussian approximations can be improved. In Section \ref{sub}, we demonstrate usefulness of our results in a particular problem of nonparametric statistics: many local means problem. In Section \ref{sec: smoothing inequalities}, we develop our new smoothing inequalities.  in Sections \ref{sec: main proofs}--\ref{sec: proofs smoothing inequalities}, we give all the proofs. Finally, in Section \ref{sec: auxiliary lemmas}, we collect several known lemmas, which are used in our derivations. 

\subsection{Notation} In the following, we assume $n\geq3$ and $d\geq3$ so that $\log n>1$ and $\log d>1$. Also, for any $\eta>0$, we use $R(0,\eta)$ to denote the centered $\ell_{\infty}$-ball with radius $\eta$, namely $R(0,\eta) := \{y\in\mathbb R^d\colon \|y\|_{\infty}\leq \eta\}$. For any $A = \prod_{j=1}^d(a_j,b_j]\in\mathcal R$ and $t\in\mathbb R$, we denote $A^t := \prod_{j=1}^d(a_j-t, b_j+t]$ and $(\partial A)^t := A^t\setminus A^{-t}$. For any $r = (r_1,\dots,r_d)^T\in\mathbb R^d$ and $t\in\mathbb R$, we denote $r+t := (r_1+t,\dots,r_d + t)^T\in\mathbb R^d$.
 Moreover, for any matrix $S = (S_{jk})_{j,k=1}^{J,K}$, we use $\|S\|_{\infty}$ to denote its $\ell_{\infty}$-norm, i.e. $\|S\|_{\infty} := \max_{1\leq j\leq J}\max_{1\leq k\leq K}|S_{ij}|$. For any matrices $S = (S_{jk})_{j,k=1}^{J,K}$ and $Q = (Q_{jk})_{j,k=1}^{J,K}$, we denote $\langle S,Q \rangle:= \sum_{j=1}^J\sum_{k=1}^K S_{jk}Q_{jk}$. Furthermore, we write $a\lesssim b$ if there exists a universal constant $C>0$ such that $a\leq Cb$. Finally, for any random variable $X$ and $q\geq 1$, we write $\|X\|_{L_q}$ and $\|X\|_{\psi_q}$ to denote the $L_q$- and $\psi_q$-norms of $X$, respectively, i.e. $\|X\|_{L_q}:=(\Ep |X|^q)^{1/q}$ and $\|X\|_{\psi_q} := \inf\{C>0\colon \Ep\psi_q(|X|/C)\leq 1\}$, where $\psi_q(x):=\exp(x^q)-1$ for all $x>0$.
We formally define $\|X\|_{\psi_q}$ in the same way even when $q\in(0,1]$, although it is not a norm but a quasi-norm. 

\section{Gaussian Approximations}\label{sec: main results}

Let $\Sigma$ be any $d\times d$ positive definite symmetric matrix with unit diagonal entries and let $\sigma_*>0$ be the square root of its smallest eigenvalue. Define
$$
\varrho_{\Sigma} = \sup_{A\in\mathcal R}|\Pr (W\in A)-\Pr (Z\in A)|,\quad Z\sim N(0,\Sigma),
$$
so that $\varrho = \varrho_{\Sigma_W}$. In this subsection, we will derive bounds on $\varrho_{\Sigma}$. By substituting $\Sigma=\Sigma_W$, we are then able to derive direct bounds on $\varrho$. In addition, it will sometimes be possible to obtain better bounds on $\varrho$ using the triangle inequality, namely $\varrho \leq |\varrho - \varrho_{\Sigma}| + \varrho_{\Sigma}$. The latter is possible when $\Sigma_W$ is degenerate but can be well approximated by a non-degenerate $\Sigma$ in the $\|\cdot\|_{\infty}$-norm; see Remark \ref{rem: degenerate cases} below for details.


Denote
$$
\Delta_0:=\frac{\log d}{\sigma_*^2}\|\Sigma-\Sigma_W\|_\infty\quad\text{and}\quad
\Delta_1:=\frac{(\log d)^2}{n^2\sigma_*^4}\max_{1\leq j\leq d}\sum_{i=1}^n\Ep X_{ij}^4.
$$
First, we derive a bound on $\varrho_{\Sigma}$ in the case of bounded $X_i$'s:
\begin{theorem}[Gaussian Approximation, Bounded Case]\label{t1}
Suppose that there is a constant $\delta>0$ such that $\|X_i\|_\infty/\sqrt{n} \leq \delta$ for every $i=1,\dots,n$ almost surely. Then
$$
\varrho_{\Sigma} \leq C\left\{\left(1\vee\left|\log\left(\frac{\Delta_1}{\log d} + \frac{\delta^2\log d}{\sigma_*^2}\right)\right|\right)\left(\Delta_0 + \sqrt{\Delta_1\log d} + \frac{(\delta\log d)^2}{\sigma_*^2}\right) + \frac{\delta(\log d)^{3/2}}{\sigma_*}\right\},
$$
where $C>0$ is a universal constant.
\end{theorem}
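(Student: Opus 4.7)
The plan is to combine the Slepian--Stein (exchangeable pair) approach with a two-step \emph{mixed smoothing} of the indicator of each rectangle. Fix $A=\prod_{j=1}^d(a_j,b_j]\in\mathcal R$ and two smoothing parameters $\beta,\tau>0$ to be optimized at the end. I would first replace $\mathbf{1}_A$ by a Lipschitz-smooth sandwich pair $g^{\pm}_{\beta,A}$ with $g^{-}_{\beta,A}\le \mathbf{1}_A\le g^{+}_{\beta,A}$, built from soft-max compositions, so that each $g^\pm_{\beta,A}$ differs from $\mathbf{1}_A$ only on the strip $(\partial A)^{C/\beta}$ and has explicit derivative bounds proportional to powers of $\beta$. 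Then convolve with the density $\varphi_{\tau}$ of $N(0,\tau^2 I_d)$ to produce smooth test functions $h_A^{\pm}:=g_{\beta,A}^{\pm}\ast\varphi_{\tau}$, whose $\ell_1$-sums of partial derivatives can be controlled by combinations of $\beta$ and $\tau^{-1}$. The surplus between $h_A^{\pm}$ and $\mathbf{1}_A$ evaluated on $Z$ is controlled by Nazarov-type anti-concentration of $N(0,\Sigma)$ on a strip of width $\beta^{-1}+c\tau\sqrt{\log d}$, so the whole problem reduces to bounding $|\Ep h_A^{\pm}(W)-\Ep h_A^{\pm}(Z)|$ plus this anti-concentration estimate.

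Next, I would bound $|\Ep h_A^\pm(W)-\Ep h_A^\pm(Z)|$ via the Slepian--Stein method as in \cite{B03,FK20,KR20}: introduce an interpolation $W(t)$ between $W$ and $Z$ (for instance $\sqrt{1-t}\,W+\sqrt t\,Z$ or the Ornstein--Uhlenbeck variant $e^{-t}W+\sqrt{1-e^{-2t}}\,Z$), differentiate in $t$, and integrate by parts against the Gaussian component. Two kinds of contributions arise: a covariance-mismatch term of size $\|\Sigma-\Sigma_W\|_\infty\cdot \Ep\sum_{j,k}|\partial_j\partial_k h_A^\pm(W(t))|$, which after applying the second-order derivative bound will yield the $\Delta_0$ contribution, and a non-Gaussian correction driven by third (and, after one further integration by parts to extract $\sum_i \Ep X_{ij}^4$, fourth) moments of the $X_i/\sqrt n$ paired with $\Ep\sum_{j,k,\ell}|\partial_j\partial_k\partial_\ell h_A^\pm(W(t))|$, which will yield the $\sqrt{\Delta_1\log d}$ and $(\delta\log d)^2/\sigma_*^2$ contributions.

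The critical technical input is the set of mixed smoothing inequalities developed in Section \ref{sec: smoothing inequalities}: they show that the $\ell_1$-sums $\sum|\partial_j\partial_k h_A^\pm|$ and $\sum|\partial_j\partial_k\partial_\ell h_A^\pm|$ remain only polylogarithmic in $d$, with explicit $\sigma_*^{-1}$ and smoothing-parameter dependence. Substituting these into the Slepian--Stein integrand and optimizing $\beta$ and $\tau$ then produces the three principal pieces of the bound; the residual $\delta(\log d)^{3/2}/\sigma_*$ term comes directly from the anti-concentration step once $\tau$ is chosen of order $\delta\log d$, while the prefactor $(1\vee|\log(\cdot)|)$ arises from integrating a singular kernel of the form $t^{-1/2}$ or $t^{-1}$ in the interpolation variable over $(\epsilon,1)$, where $\epsilon$ is the optimized smoothing scale.

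The main obstacle will be the derivative-sum bound itself: a naive estimate of $\sum_{j,k}|\partial_j\partial_k h_A^\pm|$ scales like $d^2$, and keeping it polylogarithmic in $d$ requires exploiting both the Gaussian tails of $\varphi_\tau$ and the exponential-type structure of the soft-max defining $g_{\beta,A}^\pm$, along the lines of \cite{B90,AHT98,FK20}; this is precisely what the new mixed smoothing inequalities have to accomplish. A secondary subtlety is ensuring that the covariance mismatch enters only through $\|\Sigma-\Sigma_W\|_\infty$ (not any spectral norm) with the optimal $\log d/\sigma_*^2$ weight, which is what produces the clean $\Delta_0$ form in the final bound. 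A final bookkeeping point is the sandwich step: one must check that the anti-concentration bound on the strip $(\partial A)^{\beta^{-1}+c\tau\sqrt{\log d}}$ is applied only to $Z$ (using Nazarov), while the $W$-side slack is absorbed into the $h_A^\pm$ comparison, so that the final bound does not contain $\varrho_\Sigma$ on both sides.
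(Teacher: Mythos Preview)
Your overall architecture matches the paper's: mixed smoothing (Lipschitz plus Gaussian convolution), Slepian interpolation, exchangeable-pair Stein, and the new mixed smoothing inequalities as the technical engine. However, your final paragraph rejects the very step that makes the paper's argument work. You write that one must ensure ``the final bound does not contain $\varrho_\Sigma$ on both sides''; in fact the paper's proof \emph{deliberately} produces a recursive inequality with $\varrho_\Sigma$ on both sides and then solves it.

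Here is where this matters. After the exchangeable-pair symmetrization, the fourth-order remainder is
\[
|R_2(s)|\;\lesssim\;\max_j\sum_i \Ep\xi_{ij}^4\cdot
\Ep\Bigl[\sum_{j,k,l,m}\sup_{\|y\|_\infty\le 4\delta}\bigl|\partial_{jklm}h_s(\sqrt{1-s}(W+y))\bigr|\Bigr].
\]
The paper does \emph{not} apply Lemma~\ref{lem: smoothing inequality mixed 1} globally; it combines Lemma~\ref{lem: smoothing inequality mixed 1} on the strip $(\partial A)^{\eps_s}$ with Lemma~\ref{lemma:vanish} on its complement, giving the localized bound
\[
\sum_{j,k,l,m}\sup_{\|y\|\le 4\delta}\bigl|\partial_{jklm}h_s(\sqrt{1-s}(W+y))\bigr|
\;\lesssim\;
\frac{\phi(\log d)^{3/2}}{(\sigma_*\sqrt s)^3}\,\mathbf 1_{\{\sqrt{1-s}\,W\in(\partial A)^{\eps_s}\}}\;+\;\text{(small)}.
\]
Taking expectation then forces one to control $\Pr(\sqrt{1-s}\,W\in(\partial A)^{\eps_s})$, and the only available estimate is
\[
\Pr(\sqrt{1-s}\,W\in(\partial A)^{\eps_s})\;\le\;\Pr(\sqrt{1-s}\,Z\in(\partial A)^{\eps_s})+2\varrho_\Sigma
\;\lesssim\;\eps_s\sqrt{\log d}+\varrho_\Sigma,
\]
which reintroduces $\varrho_\Sigma$ on the right. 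The paper then chooses $\phi=\bigl(2c\sqrt{\Delta_1|\log t|}\bigr)^{-1}$ so that the coefficient of $\varrho_\Sigma$ is at most $1/2$ and can be absorbed into the left-hand side. This is not bookkeeping: it is precisely the mechanism that decouples the Lipschitz scale $\phi$ from the Gaussian scale $\sqrt t$ and yields $\sqrt{\Delta_1\log d}$.

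If you instead apply Lemma~\ref{lem: smoothing inequality mixed 1} without localization (so that no anti-concentration for $W$ is needed and no recursion arises), the integral $\int_t^1|R_2(s)|\,ds$ carries an unmitigated factor $\phi/(\sigma_*^3\sqrt t)$. Balancing this against the $\sqrt{\log d}/\phi$ cost from the Lipschitz step and the $\sqrt t\,\log d$ cost from the outer smoothing produces a rate strictly worse than $\sqrt{\Delta_1\log d}$ (roughly $(\Delta_1\log d)^{1/3}$ instead of $(\Delta_1\log d)^{1/2}$), so the claimed terms do not emerge from your optimization. A minor additional point: the passage from third to fourth moments is not an integration by parts but the exchangeable-pair symmetry identity (writing $R_2$ two ways and subtracting) followed by one more Taylor step; this is what produces the fourth-moment quantity $\Delta_1$ rather than a third-moment one.
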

\begin{remark}[Optimality of Theorem \ref{t1}]\label{rem: main optimality}
The most important feature of Theorem \ref{t1} is that it implies a nearly optimal bound on $\varrho$. Indeed, assuming that (i) $n^{-1}\sum_{i=1}^n\Ep X_{ij}^2=1$ for all $j=1,\dots,d$, (ii) $|X_{i j}|\leq B_n$ almost surely for all $i=1,\dots,n$ and $j=1,\dots,d$ and some constant $B_n>0$, possibly depending on $n$, and (iii) $\sigma_{*,W}\geq b$ for some constant $b>0$, where $\sigma_{*,W}$ is the square root of the smallest eigenvalue of the correlation matrix of $W$, it follows from Theorem \ref{t1} that
\begin{equation}\label{eq: upper bound 1}
\varrho\leq \frac{CB_n(\log d)^{3/2}}{\sqrt n}\log n,
\end{equation}
where $C>0$ is a constant depending only on $b$; see Corollary \ref{thm: simple} below for details. On the other hand, we will show in Proposition \ref{prop:lower-bound} below that under mild conditions on $B_n$ and $d$, there exists a distribution of $X_i$'s such that
\begin{equation}\label{eq: lower bound 1}
\varrho\geq \frac{cB_n(\log d)^{3/2}}{\sqrt n},
\end{equation}
where $c>0$ is a constant that is independent of $(n,d,B_n)$. Comparing \eqref{eq: upper bound 1} and \eqref{eq: lower bound 1}, we conclude that the bound in Theorem \ref{t1} is optimal up to the $\log n$ factor. In addition, we will be able to get rid of the excessive $\log n$ factor in the case when $X_i$'s have an additive Gaussian component; see Theorem \ref{t-QG} below.
\qed
\end{remark}

\begin{remark}[Relation to Previous Work]\label{rem:ga-bounded}
The bound in \eqref{eq: upper bound 1} is as sharp as the bound obtained by \cite{FK20} for the log-concave $X_i$'s, which is the first
(nearly) sharp result in the non-degenerate case using implicit smoothing ideas and Stein's method. Subsequent work of \cite{L20} and \cite{KR20}, using the same implicit smoothing ideas combined with Lindeberg's method, obtained the following
bound for more general non-degenerate cases:
\begin{equation}\label{eq: upper bound alt}
\varrho\leq  C'\left( \left( \frac{B_n^6 (\log d)^4 \log(d n)}{n} \right)^{1/2}+ 
\left( \frac{(\log d)^{7} \log(d n)}{n} \right)^{1/2} \right) \log n,
\end{equation}
under the same conditions as those aforementioned in Remark \ref{rem: main optimality} and assuming also that $\Ep X_{ij}^2=1$ for all $i=1,\dots,n$ and $j=1,\dots,d$, where $C'>0$ is a constant depending only on $b$. Our bound \eqref{eq: upper bound 1} is considerably sharper. First, it has much better dependence on the dimension $d$. For example, with $B_n$ being independent of $n$ and $d\geq n$, \eqref{eq: upper bound 1} depends on $d$ via $(\log d)^{3/2}$ whereas \eqref{eq: upper bound alt} depends on $d$ via $(\log d)^4$, which is a large improvement in the high-dimensional case, where $\log d$ is growing as some fractional power of the sample size $n$. Second, \eqref{eq: upper bound 1} has much better dependence on the envelope constant $B_n$: \eqref{eq: upper bound 1} depends on $B_n$ linearly whereas \eqref{eq: upper bound alt} depends on $B_n$ via $B_n^3$. This second improvement is particularly important in classical applications to nonparametric statistics, where the intrinsic dimensionality of the problem often shows up not only via $d$ but also via $B_n$. We illustrate this point in Section \ref{sub} through an example. 
\qed
\end{remark}

In Remark \ref{rem: degenerate cases} below, we discuss how the bound on $\varrho_{\Sigma}$ in Theorem \ref{t1} can be used to obtain bounds on $\varrho_{\Sigma_W}$ when $\Sigma_W$ is degenerate. To this end, we need the following Gaussian comparison lemma, which is a special case of Theorem 1.1 in \cite{FK20} and is similar to Theorem 2.2 in \cite{L20}.

\begin{lemma}[Gaussian Comparison; \cite{FK20}, Theorem 1.1]\label{FK:GC} Let $Z \sim N(0, \Sigma)$, where $\Sigma$ has unit entries on the diagonal, and $Z' \sim N(0, \Sigma')$, then
$$
\sup_{A \in \mathcal{R}} | \Pr(Z \in A) - \Pr (Z' \in A) | \leq  C \frac{D}{\sigma^{2}_*} \log d \left(1\vee \left | \log \frac{D}{\sigma^{2}_*} \right | \right),
$$
where $\sigma^{2}_*$ is the smallest eigenvalue of $\Sigma$ and $D =  \| \Sigma - \Sigma' \|_\infty$.
\end{lemma}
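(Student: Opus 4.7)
The result is a Gaussian-to-Gaussian comparison over rectangles, and the natural route is a Slepian-type smart-path interpolation combined with a mixed smoothing of the rectangle indicator that exploits the non-degeneracy of $\Sigma$. Without loss of generality I would assume $D \le \sigma_*^2/2$, since otherwise the right-hand side exceeds $1$ and the bound is trivial.

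\textbf{Smoothing the indicator.} Fix $A = \prod_{j=1}^d(a_j,b_j] \in \mathcal R$ and write $\mathbf 1_A(x) = \mathbf 1(\max_{\ell=1,\dots,2d} u_\ell(x)\le 0)$, where $u_\ell$ enumerates the affine functions $x_j - b_j$ and $a_j - x_j$. Replace the maximum by the log-sum-exp $F_\beta(x) := \beta^{-1}\log\sum_\ell e^{\beta u_\ell(x)}$, and compose with a smooth 1D cutoff $g_\epsilon : \R \to [0,1]$ that equals $1$ on $(-\infty,0]$, vanishes on $[\epsilon,\infty)$, and satisfies $\|g_\epsilon^{(k)}\|_\infty \lesssim \epsilon^{-k}$ for $k \in \{1,2\}$. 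Finally, convolve with a small Gaussian blur of covariance $s\Sigma$: set $h_A(x) := \Ep[g_\epsilon(F_\beta(x + \sqrt s\,\Sigma^{1/2}\xi))]$ for $\xi \sim N(0,I_d)$. Then $\mathbf 1_{A^{-\eta}} \le h_A \le \mathbf 1_{A^\eta}$ with $\eta \lesssim \epsilon + \beta^{-1}\log d + \sqrt{s\log d}$.

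\textbf{Interpolation and the key Hessian estimate.} Let $Z(t) := \sqrt t\,Z + \sqrt{1-t}\,Z'$ with $Z$ and $Z'$ independent. Gaussian integration by parts gives the Slepian identity
\[
\Ep[h_A(Z)] - \Ep[h_A(Z')] = \tfrac12 \int_0^1 \langle \Sigma - \Sigma',\, \Ep[\Hess h_A(Z(t))]\rangle\,dt,
\]
so that the smooth comparison is dominated by $\tfrac12 D \sup_{t\in[0,1]} \Ep\sum_{j,k}|\partial_j\partial_k h_A(Z(t))|$. The Gaussian blur is essential here: one may integrate by parts twice in $\xi$ to transfer both spatial derivatives onto the convolution measure, producing a Hermite-type quadratic form in $\xi$ normalized by $\Sigma^{-1}$. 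Together with the standard identities $\sum_\ell|\partial_\ell F_\beta| = 1$ and $\sum_{\ell,m}|\partial_\ell\partial_m F_\beta| \le 2\beta$, and the chain rule through $g_\epsilon$, this yields the pointwise bound $\sum_{j,k}|\partial_j\partial_k h_A(x)| \lesssim 1/(s\sigma_*^2)$, provided the parameters are balanced so that $\epsilon,\,\beta^{-1}\log d \lesssim \sqrt s\,\sigma_*$.

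\textbf{Anti-concentration, optimization, and main obstacle.} Nazarov's inequality applied to $N(0,\Sigma)$ (unit diagonal) and $N(0,\Sigma')$ (diagonal in $[1/2,3/2]$ using $D\le\sigma_*^2/2$) yields $\Pr(Z\in(\partial A)^\eta) + \Pr(Z'\in(\partial A)^\eta) \lesssim \eta\sqrt{\log d}$. Combining with the interpolation estimate,
\[
|\Pr(Z\in A)-\Pr(Z'\in A)| \lesssim \frac{D}{s\sigma_*^2} + \sqrt s\,\log d,
\]
and the scale $s$ is then chosen to balance the two terms; the factor $1\vee|\log(D/\sigma_*^2)|$ enters through this balancing when $D/\sigma_*^2$ is very small. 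The main obstacle is producing the linear dependence on $D/\sigma_*^2$ rather than the $2/3$-power that one-shot LSE smoothing would deliver: the Gaussian convolution performed in the \emph{$\Sigma$-metric} is precisely what forces the Hessian bound to scale with $\sigma_*^{-2}$, and coordinating the three smoothing scales $(\beta,\epsilon,s)$ against the non-degeneracy parameter $\sigma_*$ is the most delicate piece of the argument.
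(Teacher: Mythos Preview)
The paper does not give its own proof of this lemma; it is quoted as Theorem~1.1 of \cite{FK20}. That said, your outline contains a real gap that would stop it short of the stated linear-in-$D$ bound.

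The problem is the last step. With a \emph{fixed} Gaussian blur of scale $\sqrt s$ and a separate Slepian path $Z(t)$, you end up (as you write) with
\[
|\Pr(Z\in A)-\Pr(Z'\in A)|\ \lesssim\ \frac{D}{s\sigma_*^2}\ +\ \sqrt s\,\log d.
\]
But balancing $a/s$ against $b\sqrt s$ over a single scale $s$ yields $(ab^2)^{1/3}$; no choice of $s$ can make the first term linear in $D$, and a logarithm simply does not appear from this trade-off. Your sentence ``the factor $1\vee|\log(D/\sigma_*^2)|$ enters through this balancing'' is therefore incorrect: the displayed inequality can only produce $(D/\sigma_*^2)^{1/3}\log d$, which is exactly the sub-optimal power you warned about two lines earlier. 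The Gaussian convolution in the $\Sigma$-metric is necessary but not sufficient; it gives the right Hessian scaling in $\sigma_*$, but at a \emph{single} scale it cannot improve the exponent in $D$.

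What is missing is to let the smoothing scale coincide with the interpolation time. One writes, for $h_s(w):=\Pr(w+\sqrt s\,Z\in A)$ and using Stein's identity for $Z'\sim N(0,\Sigma')$ to collapse the gradient term,
\[
\Ep\,T_t\mathbf 1_A(Z')\ =\ -\tfrac12\int_t^1 \Ep\big\langle \Sigma-\Sigma',\,\nabla^2 h_s(\sqrt{1-s}\,Z')\big\rangle\,ds,
\]
so that the Hessian bound $\sum_{j,k}|\partial_{jk}h_s|\lesssim(\log d)/(s\sigma_*^2)$ (Lemma~\ref{lemma:fk2.2} with $\phi=\infty$) sits \emph{inside} the $ds$-integral. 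Integrating $s^{-1}$ from $t$ to $1$ converts the $1/s$ into $|\log t|$, giving $|\Ep T_t\mathbf 1_A(Z')|\lesssim (D\log d/\sigma_*^2)|\log t|$. Combining with Lemma~\ref{lem: rao} and Nazarov adds $\sqrt t\,\log d$, and taking $t\asymp(D/\sigma_*^2)^2$ delivers the bound with the $1\vee|\log(D/\sigma_*^2)|$ prefactor. Note that for this purely Gaussian comparison neither the log-sum-exp layer nor the cutoff $g_\epsilon$ is needed; both derivatives are absorbed by the Gaussian convolution at the running scale $\sqrt s$.
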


\begin{remark}[On Degenerate Cases]\label{rem: degenerate cases}
As we briefly mentioned above, having bounds on $\varrho_{\Sigma}$ for general $\Sigma$ in Theorem \ref{t1} rather than for $\Sigma=\Sigma_W$ is useful when $\Sigma_W$ is degenerate. Indeed, the direct application of Theorem \ref{t1} with $\Sigma=\Sigma_W$ gives a trivial bound as $\sigma_*=0$ in this case. Instead, by the triangle inequality and  Lemma 
\ref{FK:GC}, we have
\begin{equation}\label{eq: degenerate corollary}
\varrho \leq \varrho_{\Sigma} + C\Delta_0\left(1\vee \left|\log\left(\frac{\Delta_0}{\log d}\right)\right|\right),
\end{equation}
where $C>0$ is a universal constant. This bound can be combined with Theorem \ref{t1} to obtain useful bounds on $\varrho$ whenever there exists $\Sigma$ such that the square root of its smallest eigenvalue $\sigma_*$ is strictly positive and $\|\Sigma-\Sigma_W\|_{\infty}$ is small. We illustrate this point in Section \ref{sub} through an example.
\qed
\end{remark}

Next, we extend the result in Theorem \ref{t1} to allow for unbounded $X_i$'s. Denote
$$
\mathcal M := \left(\Ep\left[\max_{1\leq j\leq d}\max_{1\leq i\leq n}|X_{ij}|^4\right]\right)^{1/4}.
$$
Also, denote
$$
\Lambda_1:=(\log d)^2(\log n)\log(dn).
$$
Finally, for all $\psi>0$, denote
$$
M(\psi):=\max_{1\leq i\leq n}\Ep\Big[\|X_i\|_{\infty}^41\{\|X_i\|_{\infty}>\psi\}\Big].
$$
We then have the following result:
\begin{theorem}[Gaussian Approximation, Unbounded Case]\label{thm: general unbounded}
For all $\psi>0$,
$$
\varrho_{\Sigma}\leq C\left\{(\log n)\left(\Delta_0 + \sqrt{\Delta_1\log d} + \frac{(\mathcal M \log d)^2}{n\sigma_*^2}\right) + \sqrt{\frac{\Lambda_1M(\psi)}{n\sigma_*^4}} + \frac{\psi(\log d)^{3/2}}{\sigma_*\sqrt n}\right\},
$$
where $C>0$ is a universal constant.
\end{theorem}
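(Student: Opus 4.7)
The overall strategy is to reduce to the bounded case by truncating the $X_i$'s at level $\psi$ and controlling the truncation residual via a maximal inequality together with Gaussian anti-concentration. I would introduce the truncated-and-centered summands
\[
\tilde X_i := X_i\mathbf{1}\{\|X_i\|_\infty\leq\psi\} - \Ep\bigl[X_i\mathbf{1}\{\|X_i\|_\infty\leq\psi\}\bigr],
\]
and set $\tilde W:=n^{-1/2}\sum_i\tilde X_i$ and $R:=W-\tilde W$. Using $\Ep X_i=0$, the centering bias has $\ell_\infty$-norm at most $M(\psi)/\psi^3$, so $\|\tilde X_i\|_\infty\lesssim \psi$. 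For any $A\in\mathcal R$ and $t>0$, the inclusion $\{W\in A,\,\|R\|_\infty\le t\}\subset\{\tilde W\in A^t\}$ together with its reverse gives
\[
\varrho_\Sigma\leq \sup_{A\in\mathcal R}\bigl|\Pr(\tilde W\in A)-\Pr(Z\in A)\bigr| + \sup_{A\in\mathcal R}\Pr\bigl(Z\in(\partial A)^t\bigr) + \Pr(\|R\|_\infty>t).
\]
Nazarov's inequality bounds the middle term by $\lesssim t\sqrt{\log d}/\sigma_*$, reducing matters to bounding the first and third terms and optimizing over $t$.

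For the first term I would apply Theorem \ref{t1} to $\tilde W$ with $\delta\asymp\psi/\sqrt n$. The truncated moment quantities $\tilde\Delta_0,\tilde\Delta_1$ need to be compared with $\Delta_0,\Delta_1$: a short computation based on Cauchy--Schwarz gives $\|\Sigma_{\tilde W}-\Sigma_W\|_\infty\lesssim M(\psi)/\psi^2$ and an analogous bound for the fourth moments, and the resulting discrepancies are absorbed into the other terms of the final bound. The outer $\log n$ factor arises from the $1\vee|\log(\cdot)|$ prefactor in Theorem \ref{t1}. Crucially, for the $(\delta\log d)^2/\sigma_*^2$ piece I would revisit the proof of Theorem \ref{t1}: this contribution ultimately stems from the quantity $\Ep[\max_i\|X_i\|_\infty^4]/n^2$, which in the bounded case equals $\delta^4$ but in general is $\mathcal M^4/n^2$, giving the desired $(\mathcal M\log d)^2/(n\sigma_*^2)$ term without any almost-sure requirement.

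For the third term, the residual $R$ is a centered sum of independent vectors supported on the events $\{\|X_i\|_\infty>\psi\}$. A Fuk--Nagaev/chaining-type maximal inequality applied to such a sum yields an estimate of the shape $\Pr(\|R\|_\infty>t)\lesssim \Lambda_1 M(\psi)/(n\,t^2\,\sigma_*^2)$, and balancing this against the anti-concentration cost $t\sqrt{\log d}/\sigma_*$ by choosing $t\asymp(\Lambda_1 M(\psi))^{1/2}/(\sqrt n\,\sigma_*)$ produces the $\sqrt{\Lambda_1 M(\psi)/(n\sigma_*^4)}$ term. The main obstacle is the $\mathcal M$-refinement described above: a direct black-box use of Theorem \ref{t1} would yield $(\psi\log d)^2/(n\sigma_*^2)$ instead, and since $\psi$ is already taxed by the last term $\psi(\log d)^{3/2}/(\sigma_*\sqrt n)$, one cannot afford to take $\psi$ large in this quadratic term. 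Hence one must track inside the bounded-case argument precisely where the $\delta^4$ factor originates and confirm that it is controlled by the $L^4$-moment $\mathcal M^4$ of the entry-wise maximum rather than by an a.s.\ bound.
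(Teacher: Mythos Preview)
Your truncate-first strategy is genuinely different from the paper's route, and the residual step contains a gap. The estimate you posit, $\Pr(\|R\|_\infty>t)\lesssim \Lambda_1 M(\psi)/(n\,t^2\sigma_*^2)$, cannot be right as written: $R=W-\tilde W$ is a functional of the $X_i$'s alone and has nothing to do with $\Sigma$, so no power of $\sigma_*$ can enter. A Nemirovski-type second-moment bound gives $\Ep\|R\|_\infty^2\lesssim(\log d)M(\psi)/\psi^2$, hence $\Pr(\|R\|_\infty>t)\lesssim(\log d)M(\psi)/(\psi^2 t^2)$; balancing this against the anti-concentration cost $t\sqrt{\log d}$ produces a cube-root expression of order $(\log d)^{2/3}(M(\psi)/\psi^2)^{1/3}$, not the square-root term $\sqrt{\Lambda_1 M(\psi)/(n\sigma_*^4)}$. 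The $\sigma_*^{-4}$ power and the specific combination $\Lambda_1=(\log d)^2(\log n)\log(dn)$ are signatures of how the tail moment $M(\psi)$ feeds into the choice of the internal smoothing parameters $t$ and $\phi$ in the Stein argument; they do not emerge from an external residual-plus-anti-concentration balance.

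The paper does not pre-truncate and invoke Theorem~\ref{t1}. It reopens the proof of Theorem~\ref{t1} and keeps the \emph{untruncated} $X_i$'s throughout: the $\mathcal M$ term then arises directly in the $R_1(s)$ bound by replacing $\delta^2\log d$ with $\sqrt{\Ep[\max_{i,j,k}\xi_{ij}^2\xi_{ik}^2]}\,\log d\le \mathcal M^2(\log d)/n$, essentially as you anticipated. The truncation at level $\psi$ is applied only inside the fourth-order remainder $R_2(s)$, by splitting on the event $\varsigma_i=\mathbf 1\{\|\xi_i\|_\infty\vee\|\xi_i'\|_\infty\le 2\psi/\sqrt n\}$ in the exchangeable-pair expression. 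On $\{\varsigma_i=1\}$ the bounded-case argument carries over; on the complement one applies Lemmas~\ref{lem: smoothing inequality mixed 1} and~\ref{lemma:vanish} with $\eta=0$ and controls the tail via $M(\psi)$ together with Chebyshev's association inequality. Because $W^{(i)}$ rather than $W$ appears in these terms, the recursion now involves leave-one-out Kolmogorov distances $\varrho^{\{i\}}$, and the proof closes by iterating the resulting inequality $[\log n]+1$ times (each iteration gains a factor $e^{-1}$, so the residual after iteration is at most $1/n$ and is absorbed by $\sqrt{\Delta_1\log d}$).
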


We now apply Theorems \ref{t1} and \ref{thm: general unbounded} to derive bounds on $\varrho = \varrho_{\Sigma_W}$ under easily interpretable conditions. Let $q\geq 4$ be a constant and let $\{B_n\}_{n\geq 1}$ be a sequence of positive constants, possibly growing to infinity. Also, let $\sigma_{*,W}$ be the square root of the smallest eigenvalue of the correlation matrix of $W$ and for all $j=1,\dots,d$, denote $\sigma_j:=(\Ep[W_{j}^2])^{1/2}$. Consider the following conditions:
\begin{itemize}
\item[{(E.1)}] $|X_{ij}/\sigma_j|\leq B_n\text{ for all }i=1,\dots,n\text{ and }j=1,\dots,d\text{ almost surely}$;
\medskip
\item[{(E.2)}] $\|X_{ij}/\sigma_j\|_{\psi_2}\leq B_n\text{ for all }i=1,\dots,n\text{ and }j=1,\dots,d$;
\medskip
\item[{(E.3)}] $\|\max_{1\leq j\leq d}|X_{ij}/\sigma_j|\|_{L_q}\leq B_n\text{ for all }i=1,\dots,n$;
\end{itemize}
and also consider condition 
\begin{itemize}
\item[{(M)\  }] $n^{-1} \sum_{i=1}^{n}\Ep|X_{i j}/\sigma_j|^{4}\leq B^2_n$ {\em for all} $j=1,\dots,p$;
\medskip
\end{itemize}
Similar conditions were previously used and motivated by applications in \cite{CCK13,CCK17,CCKK19,DZ17,FK20,D20}. 
\begin{corollary}[Gaussian Approximation under Simple Conditions]\label{thm: simple}
Under condition (E.1), we have
\begin{equation}\label{eq: simple corrolary e1}
\varrho\leq \frac{C B_n(\log d)^{3/2}\log n}{\sqrt n \sigma_{*,W}^2},
\end{equation}
where $C>0$ is a universal constant; under conditions (M) and (E.2), we have
$$
\varrho \leq C\left( \frac{B_n(\log d)^{3/2}\log n}{\sqrt n \sigma_{*,W}^2} + \frac{B_n(\log d)^{2}}{\sqrt n\sigma_{*,W}} \right),
$$
where $C>0$ is a universal constant; under conditions (M) and (E.3), we have
$$
\varrho\leq C\left\{\frac{B_n(\log d)^{3/2}\log n}{\sqrt n \sigma_{*,W}^2} + \frac{B^2_n(\log d)^{2}\log n}{n^{1-2/q} \sigma_{*,W}^2} + \left(\frac{B_n^q(\log d)^{3q/2-4}(\log n)\log(d n)}{n^{q/2-1}\sigma_{*,W}^q}\right)^{\frac{1}{q-2}}\right\},
$$
where $C>0$ is a constant depending only on $q$.
\end{corollary}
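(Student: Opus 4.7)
The plan is to apply Theorems \ref{t1} and \ref{thm: general unbounded} to $\Sigma = \Sigma_W$ (after a trivial coordinate rescaling) and then bound each of the resulting terms under the respective conditions. First I would rescale, replacing $X_{ij}$ by $X_{ij}/\sigma_j$, which does not alter $\varrho$ since the class $\mathcal R$ is invariant under coordinate scaling, but it makes $\Sigma_W$ into a correlation matrix with unit diagonal and aligns the normalization with that used in Theorems \ref{t1} and \ref{thm: general unbounded}. After rescaling, $\sigma_* = \sigma_{*,W}$, and since $\Sigma_W$ has trace $d$ and size $d$, its minimum eigenvalue is at most $1$, so $\sigma_{*,W} \leq 1$ (a fact I will use repeatedly to collapse $1/\sigma_{*,W}$ factors into $1/\sigma_{*,W}^2$). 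Setting $\Sigma = \Sigma_W$ forces $\Delta_0 = 0$, and condition (M) immediately yields $\Delta_1 \leq B_n^2(\log d)^2/(n\sigma_{*,W}^4)$, hence $\sqrt{\Delta_1 \log d}\leq B_n(\log d)^{3/2}/(\sqrt n\, \sigma_{*,W}^2)$, which is the source of the first term in every case.

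For case (E.1), I would apply Theorem \ref{t1} with $\delta = B_n/\sqrt n$. The logarithmic prefactor $1\vee|\log(\Delta_1/\log d + \delta^2\log d/\sigma_*^2)|$ is bounded by a constant multiple of $\log n$ whenever the main quantity inside is at most a power of $n$; if it is not, then the main bound already exceeds $1$ and the result is trivial via $\varrho \le 1$. The terms $\delta^2(\log d)^2/\sigma_{*,W}^2$ and $\delta(\log d)^{3/2}/\sigma_{*,W}$ are dominated by the main $\sqrt{\Delta_1\log d}$ term after multiplication by $\log n$ (using $\sigma_{*,W}\leq 1$).

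For case (E.2) with (M), I would apply Theorem \ref{thm: general unbounded}. The sub-Gaussian condition gives the standard maximal inequality $\|\max_{i\le n,\,j\le d}|X_{ij}|\|_{\psi_2} \lesssim B_n\sqrt{\log(dn)}$, hence $\mathcal M \lesssim B_n\sqrt{\log(dn)}$. I would choose $\psi = C_0 B_n\sqrt{\log(dn)}$ for a sufficiently large universal $C_0$, so that $\Pr(\|X_i\|_\infty>\psi) \leq (dn)^{-c}$; combined with the sub-Gaussian eighth-moment bound via Cauchy--Schwarz, this makes $M(\psi)$ polynomially small in $dn$ and renders the $\sqrt{\Lambda_1 M(\psi)/(n\sigma_*^4)}$ term negligible. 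The surviving $\psi(\log d)^{3/2}/(\sigma_{*,W}\sqrt n)$ term yields $B_n(\log d)^2/(\sqrt n\,\sigma_{*,W})$ (absorbing the $\sqrt{\log n}$ part into the first term via $\sigma_{*,W}\le 1$), and the $(\mathcal M\log d)^2/(n\sigma_{*,W}^2)$ term is absorbed into $\sqrt{\Delta_1\log d}$ in the regime where the claimed bound is less than $1$.

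For case (E.3) with (M), I would again apply Theorem \ref{thm: general unbounded}. By Lyapunov and a union bound over $i$, $\mathcal M \leq (\Ep\max_{i,j}|X_{ij}|^q)^{1/q} \leq (n B_n^q)^{1/q} = B_n n^{1/q}$, which produces the middle term $B_n^2(\log d)^2\log n/(n^{1-2/q}\sigma_{*,W}^2)$. For the tail term, a Markov-type estimate $M(\psi) \leq \max_i\Ep[\|X_i\|_\infty^q]\,\psi^{4-q} \leq B_n^q\psi^{4-q}$ gives a bound in terms of $\psi$; balancing $\sqrt{\Lambda_1 M(\psi)/(n\sigma_*^4)}$ against $\psi(\log d)^{3/2}/(\sigma_*\sqrt n)$ leads to $\psi^{(q-2)/2} \asymp B_n^{q/2}\sqrt{\Lambda_1}/((\log d)^{3/2}\sigma_{*,W})$, and substituting back yields exactly $(B_n^q(\log d)^{3q/2-4}(\log n)\log(dn)/(n^{q/2-1}\sigma_{*,W}^q))^{1/(q-2)}$. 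The main obstacle is purely bookkeeping: ensuring that each ancillary term is either absorbed into one of the three stated terms via $\sigma_{*,W}\leq 1$ and $\log(dn)\leq \log d + \log n$, or makes the overall bound at least $1$ (so the trivial estimate $\varrho\leq 1$ suffices). The calculations are routine given the two theorems, but the optimization of $\psi$ in the (E.3) case is where all the exponents line up to match the statement.
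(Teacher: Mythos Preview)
Your proposal is correct and follows essentially the same route as the paper's proof: rescale to unit variances, apply Theorem~\ref{t1} (for (E.1)) or Theorem~\ref{thm: general unbounded} (for (E.2), (E.3)) with $\Sigma=\Sigma_W$, and bound each term using the stated moment conditions; your choices of $\psi$ in cases (E.2) and (E.3) coincide with the paper's. The only place where the paper is slightly cleaner is the treatment of the logarithmic prefactor under (E.1): rather than arguing by cases on whether the bound exceeds $1$, the paper observes via Jensen that $\Delta_1\ge(\log d)^2/(n\sigma_{*,W}^4)\ge(\log d)/n$, which directly gives $|\log(\Delta_1/\log d+\delta^2\log d/\sigma_*^2)|\le\log n$ and handles the lower bound on the argument of the logarithm that your sketch leaves implicit.
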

\begin{remark}[Dropping Condition (M)]
Like in the case of condition (E.1), meaningful bounds on $\varrho$ can be obtained without imposing condition (M) in the cases of (E.2) and (E.3) as well. This is so because both (E.2) and (E.3) imply bounds on the left-hand side of the inequality in condition (M). Indeed, it is straightforward to check that,  for all $j=1,\dots,d$, under (E.2), we have $n^{-1}\sum_{i=1}^n \Ep | X_{i j}/\sigma_j |^4 \lesssim B_n^2\log n + B_n^4/n^2$ whereas under (E.3), we have $n^{-1}\sum_{i=1}^n \Ep | X_{i j}/\sigma_j |^4 \lesssim B_n^{2q/(q-2)}$. We do not present the implied bounds on $\varrho$ for brevity of the paper.
\qed
\end{remark}

We conclude this section with the proposition that provides a lower bound on the convergence rate of $\varrho$ and demonstrates that the convergence rate in \eqref{eq: upper bound 1} is sharp up to the $\log n$ factor:
\begin{proposition}[Lower Bound on $\varrho$]\label{prop:lower-bound}
Let $\{B_n\}_{n\geq1}$ be a sequence of positive constants such that $B_n\geq2$ for all $n$. 
Suppose that $d$ depends on $n$ so that 
\[
\frac{B_n(\log d)^{3/2}}{\sqrt n}\to0,\qquad
\frac{B_n^4}{\sqrt{\log d}}\to0,\qquad
\frac{\sqrt n}{dB_n(\log d)^{3/2}}\to0
\]
as $n\to\infty$. Then, we can construct i.i.d.~random vectors $X_{n,1},\dots,X_{n,n}$ in $\mathbb{R}^d$ for every $n$ such that
$$\Ep[X_{n,ij}]=0, \ \Ep[X_{n,ij}^2]=1, \ |X_{n,ij}|\leq B_n,\quad\text{for all }n\geq 1, \ i=1,\dots,n, \ j=1,\dots,d;$$
and
\[
\liminf_{n\to\infty}\frac{\sqrt n}{B_n(\log d)^{3/2}}\sup_{x\in\mathbb{R}}\left|\Pr\left(\max_{1\leq j\leq d}\frac{1}{\sqrt n}\sum_{i=1}^nX_{n,ij}\leq x\right)-\Pr\left(\max_{1\leq j\leq d}Z_{j}\leq x\right)\right|>0,
\]
where $Z_1,Z_2,\dots$ are independent standard normal variables. 
\end{proposition}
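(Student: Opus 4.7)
The plan is to construct i.i.d.\ vectors with independent, two-point skewed coordinates and to evaluate the Kolmogorov distance between $\max_j W_{n,j}$ and $\max_j Z_j$ at the normal $(1-1/d)$-quantile $x_n^*:=\Phi^{-1}(1-1/d)$. At this tail point, the one-dimensional skewness Edgeworth correction for each coordinate is of order $B_n(\log d)^{3/2}/(d\sqrt n)$, and it gets amplified by the factor $d$ coming from the product structure $F_n^d$ of the max event, producing the target rate $B_n(\log d)^{3/2}/\sqrt n$.

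Concretely, I would take $X_{n,ij}$ i.i.d.\ across both $i$ and $j$ with $\Pr(X_{n,ij}=B_n)=p:=1/(1+B_n^2)$ and $\Pr(X_{n,ij}=-1/B_n)=1-p$. A direct check gives $\Ep X_{n,ij}=0$, $\Ep X_{n,ij}^2=1$, $|X_{n,ij}|\leq B_n$, and $\mu_3:=\Ep X_{n,ij}^3=B_n-1/B_n\geq (3/4)B_n$ for $B_n\geq 2$. Writing $W_{n,j}:=n^{-1/2}\sum_i X_{n,ij}$, these are i.i.d.\ with some common CDF $F_n$, so $\Pr(\max_j W_{n,j}\leq x)=F_n(x)^d$ and $\Pr(\max_j Z_j\leq x)=\Phi(x)^d$. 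I would then invoke a one-dimensional nonuniform Edgeworth expansion
\[
F_n(x)=\Phi(x)-\phi(x)(x^2-1)\frac{\mu_3}{6\sqrt n}+R_n(x)
\]
(valid for bounded summands), and use Mills' ratio, which gives $x_n^{*2}=2\log d-O(\log\log d)$ and $\phi(x_n^*)\asymp\sqrt{\log d}/d$, to compute
\[
\eta_n:=\Phi(x_n^*)-F_n(x_n^*)=\frac{\mu_3}{6\sqrt n}\,\phi(x_n^*)(x_n^{*2}-1)(1+o(1))\ \asymp\ \frac{B_n(\log d)^{3/2}}{d\sqrt n}.
\]
Under the first hypothesis $B_n(\log d)^{3/2}/\sqrt n\to 0$, $d\eta_n\to 0$, so a first-order Taylor expansion of $t\mapsto t^d$ at $t=1-1/d$ gives
\[
\Phi(x_n^*)^d-F_n(x_n^*)^d=d\eta_n\,(1-1/d)^{d-1}(1+o(1)),
\]
and $(1-1/d)^{d-1}\to e^{-1}$ since the third hypothesis forces $d\to\infty$. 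Consequently $\sup_x|F_n(x)^d-\Phi(x)^d|\gtrsim B_n(\log d)^{3/2}/\sqrt n$, and the claim follows upon dividing by $B_n(\log d)^{3/2}/\sqrt n$ and taking $\liminf_{n\to\infty}$.

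The main obstacle is controlling the Edgeworth remainder $R_n(x_n^*)$ and the second-order Edgeworth term (with coefficients $\mu_4,\mu_3^2\lesssim B_n^2$ and Hermite polynomials up to degree $5$) in the growing-tail regime $x_n^*\to\infty$. A standard nonuniform Edgeworth bound gives a second-order contribution of size $\phi(x_n^*)B_n^2(\log d)^{5/2}/n\asymp B_n^2(\log d)^3/(dn)$ per coordinate, which after multiplication by $d$ becomes $O(B_n^2(\log d)^3/n)$ and is therefore $o(B_n(\log d)^{3/2}/\sqrt n)$ exactly under the first hypothesis. The second hypothesis $B_n^4/\sqrt{\log d}\to 0$ supplies the extra margin needed to suppress the Esseen-type oscillatory remainder intrinsic to the discrete two-point law (whose per-coordinate amplitude is $\asymp B_n/\sqrt n$), while the third hypothesis ensures that the Taylor remainder of order $1/d$ is $o(d\eta_n)$ and that $(1-1/d)^{d-1}$ is bounded away from $0$.
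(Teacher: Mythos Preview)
Your construction and overall strategy are essentially the same as the paper's: two-point skewed i.i.d.\ coordinates, evaluate at the normal $(1-1/d)$-quantile, and exploit the exact product structure $\Pr(\max_j W_{n,j}\le x)=F_n(x)^d$ (the paper phrases the last step as a Chen--Stein/Poisson approximation to $e^{-\lambda_n}$ with $\lambda_n=d(1-F_n(x_n))$, which is equivalent to your Taylor expansion of $t^d$). The target term $\phi(x_n^*)(x_n^{*2}-1)\mu_3/(6\sqrt n)$ and its amplification by $d$ are identified correctly.

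The genuine gap is the one-dimensional input: you invoke ``a standard nonuniform Edgeworth bound'' for $R_n(x_n^*)$, but no off-the-shelf result supplies this uniformly in your triangular array. Your summands are lattice-valued and their law changes with $n$; classical Edgeworth or Cram\'er moderate-deviation theorems require the moment generating function to be bounded on a fixed neighborhood of the origin, and here $\Ep e^{tX_{n,11}}\ge p\,e^{tB_n}\asymp B_n^{-2}e^{tB_n}\to\infty$ for any fixed $t>0$. The paper addresses exactly this point (see the remark after the proof) by invoking a \emph{refined} Cram\'er-type moderate deviation with explicit error, Theorem~2.1 of Fang--Luo--Shao, which for $|X_i|\le\delta\sqrt n$ gives
\[
\left|\frac{\Pr(W_{n,1}>x_n)}{(1-\Phi(x_n))\,e^{\gamma_n x_n^3/6}}-1\right|\le C\,n^2\delta_n^5(1+x_n^2),\qquad \delta_n=B_n/\sqrt n.
\]
This is also where the second hypothesis actually enters: $n^2\delta_n^5 x_n^2\asymp B_n^5(\log d)/\sqrt n$ is $o\!\bigl(B_n(\log d)^{3/2}/\sqrt n\bigr)$ precisely when $B_n^4/\sqrt{\log d}\to0$. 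Your attribution of that hypothesis to the ``Esseen oscillatory remainder'' is off; that oscillatory term has amplitude $\asymp (h/\sqrt n)\phi(x_n^*)\asymp B_n\sqrt{\log d}/(d\sqrt n)$, which after multiplication by $d$ is $B_n\sqrt{\log d}/\sqrt n=o\!\bigl(B_n(\log d)^{3/2}/\sqrt n\bigr)$ without any extra assumption. To close the argument you should either cite a moderate-deviation result with explicit $\delta$-dependent error (as the paper does) or produce an Edgeworth remainder bound whose constants are tracked through $B_n$; the vague appeal to ``standard'' bounds does not survive $B_n\to\infty$.
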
 
This proposition extends Proposition 1.1 in \cite{FK20} to allow for the $n$-dependent envelope constant $B_n$.

\section{Bootstrap Approximations}\label{sec: bootstrap approximation}
Since $\Sigma_W$ is in practice typically unknown, the Gaussian approximations obtained in the previous section are typically infeasible in the sense that we are unable to calculate probabilities $\Pr(Z\in A)$, $A\in\mathcal R$ and $Z\sim N(0,\Sigma_W)$, which is needed in statistical applications. In this section, we therefore consider bootstrap approximations. These approximations allow to estimate $\Pr(Z\in A)$ from the sample $X_1,\dots,X_n$. We focus on the multiplier and empirical bootstrap approximations.

Throughout this section, let $\Sigma$ be any $d\times d$ positive definite symmetric matrix with unit diagonal entries and let $\sigma_*>0$ be the square root of its smallest eigenvalue. This is the same convention as that in the previous section.

\subsection{Multiplier Bootstrap Approximation}
Let $\xi_1,\dots,\xi_n$ be i.i.d.~$N(0,1)$ random variables that are independent of $X:=(X_1,\dots,X_n)$. Denote $\bar X:=(\bar X_1,\dots,\bar X_d)^T:=n^{-1}\sum_{i=1}^n X_i$ and consider the (Gaussian) multiplier bootstrap version of $W$:
$$
W^{\xi}:=\frac{1}{\sqrt n}\sum_{i=1}^n\xi_i(X_i-\bar X).
$$
In this subsection, we are interested in bounding
$$
\varrho^{\xi}_{\Sigma}:=\sup_{A\in\mathcal R}|\Pr (W^{\xi}\in A\mid X)-\Pr (Z\in A)|,\quad Z\sim N(0,\Sigma),
$$
and, in particular, $\varrho^{\xi}:=\varrho^{\xi}_{\Sigma_W}$. Denote
$$
\Delta_0':=\frac{\log d}{\sigma_*^2}\left\| \Sigma - \frac{1}{n}\sum_{i=1}^n(X_i-\bar X)(X_i-\bar X)^T \right\|_{\infty}.
$$
The following result is as an easy consequence of Lemma \ref{FK:GC}.
\begin{theorem}[Multiplier Bootstrap]\label{thm: multiplier bootstrap}
We have
$$
\varrho^{\xi}_{\Sigma} \leq  C\Delta_0'\left(1\vee\left|\log\left(\frac{\Delta_0'}{\log d}\right)\right|\right),
$$
where $C>0$ is a universal constant.
\end{theorem}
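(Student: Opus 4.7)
The plan is to reduce the statement to the Gaussian comparison inequality of Lemma \ref{FK:GC} by conditioning on the data. Conditional on $X = (X_1,\dots,X_n)$, the multipliers $\xi_1,\dots,\xi_n$ are i.i.d.\ $N(0,1)$, so $W^{\xi} = n^{-1/2}\sum_{i=1}^n \xi_i(X_i-\bar X)$ is a conditionally centered Gaussian vector with covariance matrix
$$
\widehat{\Sigma}_W := \frac{1}{n}\sum_{i=1}^n (X_i-\bar X)(X_i-\bar X)^T.
$$
Consequently, $\varrho^{\xi}_{\Sigma}$ is, pathwise in $X$, a purely Gaussian-to-Gaussian comparison between $N(0,\widehat{\Sigma}_W)$ and $N(0,\Sigma)$ in Kolmogorov distance over the class $\mathcal R$ of rectangles.

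I would then apply Lemma \ref{FK:GC} directly, taking $\Sigma$ as the reference matrix (which has unit diagonal entries and smallest eigenvalue $\sigma_*^2$ by assumption) and $\Sigma' = \widehat{\Sigma}_W$. With $D := \|\Sigma - \widehat{\Sigma}_W\|_\infty$, the lemma yields, pathwise in $X$,
$$
\varrho^{\xi}_{\Sigma} \leq C\,\frac{D\log d}{\sigma_*^2}\,\Bigl(1\vee \Bigl|\log\tfrac{D}{\sigma_*^2}\Bigr|\Bigr).
$$
Since by definition $\Delta_0' = D\log d/\sigma_*^2$, and hence $D/\sigma_*^2 = \Delta_0'/\log d$, substituting immediately produces the bound claimed in Theorem \ref{thm: multiplier bootstrap}.

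I do not anticipate any substantive obstacle: the only point worth verifying is that Lemma \ref{FK:GC}, as stated, requires only the \emph{reference} covariance $\Sigma$ to have unit diagonal entries and places no such constraint on the second covariance matrix $\Sigma'$. In particular, $\widehat{\Sigma}_W$ need not have unit diagonal, which matters because typically it will not. This is precisely why the authors describe the result as an ``easy consequence'' of the Gaussian comparison lemma.
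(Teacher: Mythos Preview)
Your proposal is correct and follows essentially the same route as the paper: condition on $X$ so that $W^{\xi}$ is conditionally Gaussian with covariance $\widehat{\Sigma}_W$, and then apply Lemma~\ref{FK:GC} with reference matrix $\Sigma$ and $\Sigma'=\widehat{\Sigma}_W$. The paper's proof additionally remarks that the Stein kernel of a centered Gaussian equals its covariance (relevant because the cited result in \cite{FK20} is stated in that generality), but since you invoke the restated Lemma~\ref{FK:GC} directly, your argument is complete as written.
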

Applying Theorem \ref{thm: multiplier bootstrap} under easily interpretable conditions (M) and (E), we obtain the following analog of Corollary \ref{thm: simple}.
\begin{corollary}[Multiplier Bootstrap under Simple Conditions]\label{thm: multiplier bootstrap simple}
Under condition (E.1), we have with probability at least $1-\alpha$ that
$$
\varrho^{\xi}\leq \frac{C B_n(\log d)(\log n)\sqrt{\log(d/\alpha)}}{\sqrt n\sigma_{*,W}^2},
$$
where $C>0$ is a universal constant; under conditions (M) and (E.2), we have with probability at least $1-\alpha$ that
$$
\varrho^{\xi}\leq \frac{C B_n(\log d)(\log n)\sqrt{\log(d/\alpha)}}{\sqrt n\sigma_{*,W}^2},
$$
where $C>0$ is a universal constant; under conditions (M) and (E.3), we have with probability at least $1-\alpha$ that
$$
\varrho^{\xi}\leq \frac{C (\log d)(\log n)}{\sigma_{*,W}^2}\left(\frac{B_n\sqrt{\log(d/\alpha)}}{\sqrt n} + \frac{B_n^2(\log  d + \alpha^{-2/q})}{n^{1-2/q}}\right),
$$
where $C>0$ is a constant depending only on $q$.
\end{corollary}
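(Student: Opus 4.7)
The plan is straightforward: reduce everything to Theorem \ref{thm: multiplier bootstrap} applied with $\Sigma = \Sigma_W$, and then control the random quantity
\[
\widehat\Delta := \left\|\Sigma_W - \frac{1}{n}\sum_{i=1}^n(X_i-\bar X)(X_i-\bar X)^T\right\|_{\infty}
\]
with probability at least $1-\alpha$ under each of the moment/tail conditions. After the normalization $X_{ij}\to X_{ij}/\sigma_j$, which does not change $\varrho^\xi$, we may assume $\sigma_j=1$ for all $j$, so that the diagonal of $\Sigma_W$ has unit entries (at least in an averaged sense), which is the shape needed for Theorem \ref{thm: multiplier bootstrap}. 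Then $\Delta_0' = (\log d)\,\widehat\Delta/\sigma_{*,W}^2$, and any high-probability bound $\widehat\Delta \leq \eta$ yields
\[
\varrho^{\xi}\ \leq\ \frac{C(\log d)\eta}{\sigma_{*,W}^2}\Bigl(1\vee\bigl|\log(\eta/\sigma_{*,W}^2)\bigr|\Bigr),
\]
and under all three regimes of interest the logarithmic factor is $O(\log n)$, which produces the $\log n$ in the stated bounds.

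Next I split $\widehat\Delta \leq A + B$, where
\[
A := \left\|\frac{1}{n}\sum_{i=1}^n X_iX_i^T - \frac{1}{n}\sum_{i=1}^n\Ep[X_iX_i^T]\right\|_{\infty},\qquad B := \|\bar X\|_{\infty}^2,
\]
using the identity $\frac{1}{n}\sum_i(X_i-\bar X)(X_i-\bar X)^T = \frac{1}{n}\sum_i X_iX_i^T - \bar X\bar X^T$ together with $\|\bar X\bar X^T\|_\infty = \|\bar X\|_\infty^2$. Both $A$ and $B$ are maxima of (scalar) sums of independent centered random variables: $A$ is a $d^2$-fold maximum of sums with summands $X_{ij}X_{ik}-\Ep[X_{ij}X_{ik}]$, and $B$ is (the square of) a $d$-fold maximum of sums with summands $X_{ij}$. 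Under (E.1), Bernstein's inequality with variance proxy $\sum_i\Var(X_{ij}X_{ik})\lesssim nB_n^2$ (using $|X_{ij}X_{ik}|\leq B_n^2$ and $\Ep[X_{ij}^2]\leq 1$ after normalization) combined with a union bound over $d^2$ pairs gives $A \lesssim B_n\sqrt{\log(d/\alpha)/n} + B_n^2\log(d/\alpha)/n$, and $B \lesssim \log(d/\alpha)/n + B_n^2(\log(d/\alpha))^2/n^2$. The dominant term is $B_n\sqrt{\log(d/\alpha)/n}$, which inserted into the display above produces exactly the stated bound for (E.1). Under (M)+(E.2), one uses the Bernstein/sub-exponential bound (e.g., Lemma~2.2.11 of van der Vaart–Wellner, or a Bernstein inequality adapted to $\psi_1$ summands): the product $X_{ij}X_{ik}$ is sub-exponential with $\psi_1$-norm $\lesssim B_n^2$, and (M) controls the variance by $B_n^2$, so one obtains the same leading term $B_n\sqrt{\log(d/\alpha)/n}$ up to a subdominant $B_n^2(\log d)(\log(d/\alpha))/n$ term that is absorbed into the bound. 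Under (M)+(E.3), one truncates at level $\psi = B_n n^{1/q}\alpha^{-1/q}$: by Markov applied to $\|\max_j|X_{ij}|\|_{L_q}\leq B_n$, the event $\{\max_i\|X_i\|_\infty\leq \psi\}$ holds with probability $\geq 1-\alpha$, and on this event Bernstein with the envelope $\psi$ and variance controlled by (M) yields $A\lesssim B_n\sqrt{\log(d/\alpha)/n} + \psi^2\log(d/\alpha)/n$, where the second term matches the $B_n^2(\log d + \alpha^{-2/q})/n^{1-2/q}$ contribution after simplification.

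The main obstacle is the bookkeeping for case (E.3): since one only has $L_q$ control of $\max_j|X_{ij}|$ rather than an envelope, one has to truncate at a level high enough to keep the tail probability below $\alpha$ while low enough that the resulting Bernstein bound on the truncated part gives the stated dependence on $\alpha^{-2/q}$. The remaining cases (E.1) and (E.2) are then comparatively routine applications of scalar concentration plus a union bound, and in every case the plug-in into Theorem \ref{thm: multiplier bootstrap} with the logarithm bounded by $\log n$ yields the three bounds of the corollary.
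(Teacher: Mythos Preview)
Your overall strategy coincides with the paper's: normalize to $\sigma_j=1$, apply Theorem~\ref{thm: multiplier bootstrap} with $\Sigma=\Sigma_W$, split $\widehat\Delta\leq A+B$ exactly as you do, and control $A,B$ by concentration; the $\log n$ then comes from the monotonicity of $x(1\vee|\log x|)$ together with the (implicit) ``without loss of generality the bound is $\leq 1$'' reduction. For (E.1) your Bernstein-plus-union-bound argument is equivalent to what the paper does and gives the stated bound.

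There are, however, two places where your concentration tools are not sharp enough to reproduce the \emph{exact} bounds of the corollary. For (E.2), the off-the-shelf sub-exponential Bernstein (Vershynin-type) uses $\sum_i\|Y_i\|_{\psi_1}^2\leq nB_n^4$ as the variance proxy and yields a leading term $B_n^2\sqrt{\log(d/\alpha)/n}$, not $B_n\sqrt{\log(d/\alpha)/n}$; to exploit (M) and get the $B_n$ dependence you need an inequality that separates the true variance from the $\psi_1$ scale. The paper does this via a maximal inequality for the mean (Lemma~\ref{lem: maximal}) plus a Fuk--Nagaev deviation bound (Lemma~\ref{lem: fuk-nagaev}(i)); a Bennett-type Bernstein with the moment condition $\Ep|Y_i|^k\leq \tfrac{k!}{2}\sigma_i^2 M^{k-2}$ would also work, but you should name and verify such a tool rather than invoke a generic sub-exponential bound.

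For (E.3) the gap is concrete. With $\psi=B_n(n/\alpha)^{1/q}$, Bernstein on the truncated variables gives a second term $\psi^2\log(d/\alpha)/n = B_n^2\,\alpha^{-2/q}\log(d/\alpha)/n^{1-2/q}$, and this does \emph{not} simplify to $B_n^2(\log d+\alpha^{-2/q})/n^{1-2/q}$: the former exceeds the latter by a factor of order $\log(d/\alpha)$ whenever $\alpha$ is small. No choice of a single truncation level fixes this, because the constraint $\Pr(\max_i\|X_i\|_\infty>\psi)\leq\alpha$ forces $\psi^2\geq B_n^2(n/\alpha)^{2/q}$. The additive form $\log d+\alpha^{-2/q}$ in the stated bound comes precisely from a Fuk--Nagaev inequality (Lemma~\ref{lem: fuk-nagaev}(ii) with $s=q/2$), which decouples the Gaussian tail $\exp(-nt^2/(3B_n^2))$ from the polynomial tail $cnB_n^q/(nt)^{q/2}$ and lets you set $t\asymp B_n\sqrt{\log(1/\alpha)/n}+B_n^2\alpha^{-2/q}/n^{1-2/q}$ without the extra logarithm. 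Replacing your truncation step by this Fuk--Nagaev bound (and using Lemma~\ref{lem: maximal} for $\Ep[A]$) recovers the corollary as stated.
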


\begin{remark}[Main Features of Corollary \ref{thm: multiplier bootstrap simple}]
The bounds in Corollary \ref{thm: multiplier bootstrap simple} are generally comparable with the corresponding bounds in Corollary \ref{thm: simple}. For example, under condition (E.1), combining Corollaries \ref{thm: simple} and \ref{thm: multiplier bootstrap simple}, we have that for some universal constant $C>0$, with probability at least $1-1/d$,
\begin{equation}\label{eq: mult boot cor e1 main}
\sup_{A\in\mathcal R}\Big|\Pr(W\in A) - \Pr(W^{\xi}\in A\mid X)\Big| \leq \frac{C B_n(\log d)^{3/2}\log n}{\sqrt n\sigma_{*,W}^2},
\end{equation}
which has the same right-hand side as that of \eqref{eq: simple corrolary e1}. Thus, we are able to obtain a feasible bootstrap approximation bound to probabilities $\Pr(W\in A)$ with the same convergence rate as that of the infeasible Gaussian approximation. Note also that under the assumption that $\sigma_{*,W}$ is bounded below from zero (strongly non-degenerate case in the terminology of \cite{FK20}), \eqref{eq: mult boot cor e1 main} is much better than the general bound (which does not require $\sigma_{*,W}>0$) following from the results in \cite{CCKK19}.
\qed
\end{remark}
\begin{remark}[Other Types of Multipliers]
In Theorem \ref{thm: multiplier bootstrap} and Corollary \ref{thm: multiplier bootstrap simple}, we focused on Gaussian multipliers but we note that similar results can be obtained for other multipliers, e.g.~Rademacher or Mammen multipliers; see \cite{M93} and \cite{CCKK19} for definitions. To do so, we can apply Theorem \ref{t1} conditional on $X_i$'s to bound
$$
\sup_{A\in\mathcal R}|\Pr(W^{\xi}\in A\mid X) - \Pr(W^{\zeta}\in A\mid X)|,
$$
where $W^{\zeta}$ is defined by analogy with $W^{\xi}$ with multipliers represented by random variables $\zeta_1,\dots,\zeta_n$ instead of $\xi_1,\dots,\xi_n$.
\qed
\end{remark}

\subsection{Empirical Bootstrap Approximation}
Let $X_1^*,\dots,X_n^*$ be i.i.d.~draws from the empirical distribution of $X_1,\dots,X_n$ and consider the empirical bootstrap version of $W$:
$$
W^*:=\frac{1}{\sqrt n}\sum_{i=1}^n(X_i^*-\bar X).
$$
In this subsection, we are interested in bounding
$$
\varrho^{*}_{\Sigma}:=\sup_{A\in\mathcal R}|\Pr (W^{*}\in A\mid X)-\Pr (Z\in A)|,\quad Z\sim N(0,\Sigma),
$$
and, in particular, $\varrho^{*}:=\varrho^{*}_{\Sigma_W}$. To do so, denote
$$
\mathcal M^*:=\max_{1\leq j\leq d}\max_{1\leq i\leq n}|X_{ij} - \bar X_j|
$$
and, for all $\psi>0$,
$$
M^*(\psi) :=\frac{1}{n}\sum_{i=1}^n\|X_i-\bar X\|_{\infty}^41\{\|X_i-\bar X\|_{\infty} >\psi\}.
$$
Also, denote
$$
\Delta_1':= \frac{(\log d)^2}{n^2\sigma_*^4}\max_{1\leq j\leq d}\sum_{i=1}^n (X_{ij}-\bar X_j)^4.
$$
The following result is an easy consequence of Theorem \ref{thm: general unbounded}.
\begin{theorem}[Empirical Bootstrap]\label{thm: empirical bootstrap}
For all $\psi>0$,
$$
\varrho^*_{\Sigma}\leq C\left\{(\log n)\left(\Delta_0' + \sqrt{\Delta_1'\log d} + \frac{(\mathcal M^* \log d)^2}{n\sigma_*^2}\right) + \sqrt{\frac{\Lambda_1M^*(\psi)}{n\sigma_*^4}} + \frac{\psi(\log d)^{3/2}}{\sigma_*\sqrt n}\right\},
$$
where $C>0$ is a universal constant.
\end{theorem}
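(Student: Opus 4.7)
The plan is to deduce Theorem \ref{thm: empirical bootstrap} directly from Theorem \ref{thm: general unbounded} by conditioning on $X = (X_1,\dots,X_n)$. Conditional on $X$, the vectors $X_1^* - \bar X, \dots, X_n^* - \bar X$ are i.i.d., centered (since $\Ep[X_i^* \mid X] = \bar X$), and uniformly bounded in fourth moment. Thus, the hypotheses of Theorem \ref{thm: general unbounded} are satisfied for the conditional distribution, so applying it with $X_i$ replaced by $X_i^* - \bar X$ and with the expectations replaced by conditional expectations given $X$ yields a bound of precisely the same shape as the one we want to prove.

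The crux is then to identify each of the quantities $\Delta_0$, $\Delta_1$, $\mathcal M$, $M(\psi)$ under the conditional distribution with their empirical-bootstrap counterparts $\Delta_0'$, $\Delta_1'$, $\mathcal M^*$, $M^*(\psi)$. First, I would compute
\[
\Cov(W^* \mid X) = \frac{1}{n}\sum_{i=1}^n \Cov(X_i^* \mid X) = \frac{1}{n}\sum_{i=1}^n (X_i - \bar X)(X_i - \bar X)^T,
\]
so the conditional $\Delta_0$ equals $\Delta_0'$ by definition. Next, since $X_i^*$ is uniform on $\{X_1,\dots,X_n\}$, for every $j$,
\[
\Ep\bigl[(X_{ij}^* - \bar X_j)^4 \bigm| X\bigr] = \frac{1}{n}\sum_{k=1}^n (X_{kj} - \bar X_j)^4,
\]
so that $\sum_{i=1}^n \Ep[(X_{ij}^* - \bar X_j)^4 \mid X] = \sum_{k=1}^n (X_{kj} - \bar X_j)^4$, and hence the conditional $\Delta_1$ coincides with $\Delta_1'$. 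For the envelope, since $\|X_i^* - \bar X\|_\infty \leq \max_{1\le k\le n}\|X_k - \bar X\|_\infty$ almost surely, the conditional $\mathcal M$ is at most $\mathcal M^*$. Finally, the analogous computation for the truncated fourth moment gives, for each $i$,
\[
\Ep\bigl[\|X_i^* - \bar X\|_\infty^4\, 1\{\|X_i^* - \bar X\|_\infty > \psi\} \bigm| X\bigr] = \frac{1}{n}\sum_{k=1}^n \|X_k - \bar X\|_\infty^4\, 1\{\|X_k - \bar X\|_\infty > \psi\} = M^*(\psi),
\]
so the conditional $M(\psi)$ equals $M^*(\psi)$. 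Plugging these identifications into the conditional version of Theorem \ref{thm: general unbounded} yields the stated bound.

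There is no real obstacle here beyond careful bookkeeping: the result is essentially a corollary obtained by reinterpretation of the unbounded Gaussian approximation theorem under the conditional law. The only point that requires a moment of care is confirming that every step in the derivation of Theorem \ref{thm: general unbounded} is measurable with respect to $X$ so that the conditional application is valid; but since that theorem is a purely distributional statement about independent centered vectors with four finite moments, this is automatic.
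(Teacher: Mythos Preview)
Your proposal is correct and matches the paper's own proof essentially verbatim: the paper likewise deduces the result as a direct application of Theorem~\ref{thm: general unbounded} to the conditionally i.i.d.\ centered vectors $X_1^*-\bar X,\dots,X_n^*-\bar X$, identifying $\Delta_0,\Delta_1,M(\psi)$ with $\Delta_0',\Delta_1',M^*(\psi)$ and bounding the conditional $\mathcal M$ by $\mathcal M^*$.
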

Like in the previous subsection, applying Theorem \ref{thm: empirical bootstrap} under easily interpretable conditions (M) and (E), we obtain the following analog of Corollary \ref{thm: simple}.
\begin{corollary}[Empirical Bootstrap under Simple Conditions]\label{cor: emp boot simple}
Under condition (E.1), we have with probability at least $1-\alpha$ that
$$
\varrho^{*}\leq \frac{C B_n(\log d)(\log n)\sqrt{\log(d/\alpha)}}{\sqrt n\sigma_{*,W}^2},
$$
where $C>0$ is a universal constant; under conditions (M) and (E.2), we have with probability at least $1 - \alpha$ that
$$
\varrho^{*}\leq C\left(\frac{B_n(\log d)(\log n)\sqrt{\log(d/\alpha)}}{\sqrt n\sigma_{*,W}^2}+\frac{B_n(\log(dn))^2\sqrt{\log(1/\alpha)}}{\sqrt n\sigma_{*,W}}\right),
$$
where $C>0$ is a universal constant; under conditions (M) and (E.3), we have with probability at least $1-\alpha$ that
$$
\varrho^{*} \leq C\left(\frac{B_n(\log d)(\log n)\sqrt{\log(d/\alpha)}}{\sqrt n\sigma_{*,W}^2}+\frac{B_n\sqrt{\log(d n)}\log d}{n^{1/2-1/q}\alpha^{1/q}\sigma_{*,W}}\right),
$$
where $C>0$ is a constant depending only on $q$.
\end{corollary}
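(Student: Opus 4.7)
The plan is to derive Corollary \ref{cor: emp boot simple} as a direct, conditional-on-the-data application of Theorem \ref{thm: empirical bootstrap}, in complete parallel with how Corollary \ref{thm: simple} follows from Theorem \ref{thm: general unbounded}. After rescaling each coordinate $X_{ij}$ by $\sigma_j$ (the rescaling preserves rectangles, so $\varrho^{*}$ is unchanged), we may take $\Sigma = \Sigma_W$ to be the correlation matrix of $W$, which has unit diagonal and smallest eigenvalue $\sigma_{*,W}^{2}$. Theorem \ref{thm: empirical bootstrap} then reduces the task to controlling, in high probability, the four random quantities $\Delta_0'$, $\Delta_1'$, $\mathcal M^{*}$, and $M^{*}(\psi)$ for a well-chosen $\psi$, under each of (E.1), (E.2), and (E.3).

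For $\Delta_0'$, the key observation is that the sample second moment matrix $n^{-1}\sum_{i}(X_i-\bar X)(X_i-\bar X)^{T}$ deviates from $\Sigma_W$ entrywise as a sum of $n$ independent variables. Under (E.1) a Hoeffding bound plus a union bound over the $d^2$ entries gives $\Delta_0' \lesssim (B_n^{2}\log d/\sigma_{*,W}^{2})\sqrt{\log(d/\alpha)/n}$ with probability at least $1-\alpha$, and the same rate holds under (E.2) by a Bernstein-type inequality for sub-Gaussian products; under (E.3) a Rosenthal/Nemirovski inequality produces the extra polynomial-in-$\alpha$ terms. The quantity $\Delta_1'$ is handled by combining (M) with a concentration bound for the sample fourth moments $n^{-1}\sum_i (X_{ij}-\bar X_j)^{4}/\sigma_j^{4}$, which yields $\sqrt{\Delta_1'\log d} \lesssim B_n(\log d)^{3/2}/(\sqrt{n}\,\sigma_{*,W}^{2})$ with the stated probability in all three regimes; this term produces the leading summand in every line of the corollary.

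For $\mathcal M^{*}$ and $M^{*}(\psi)$, the envelopes depend on the tails. Under (E.1) we have $\mathcal M^{*}\le 2B_n$ deterministically and $M^{*}(2B_n)=0$, so picking $\psi = 2B_n$ eliminates the $M^{*}$ term and the residual $\psi(\log d)^{3/2}/(\sigma_{*,W}\sqrt n)$ is absorbed into the leading bound. Under (E.2) a standard $\psi_2$-maximum inequality gives $\mathcal M^{*}\lesssim B_n\sqrt{\log(dn/\alpha)}$ in probability, and choosing $\psi\sim B_n\sqrt{\log(dn)}$ lets the sub-Gaussian tail integrals control $M^{*}(\psi)$ and produces the second summand $B_n(\log(dn))^{2}\sqrt{\log(1/\alpha)}/(\sqrt n\,\sigma_{*,W})$. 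Under (E.3) the Markov-type bound $\mathcal M^{*}\lesssim B_n n^{1/q}/\alpha^{1/q}$ and a tuning of $\psi$ against the fourth moment mass beyond $\psi$ yield the third summand $B_n\sqrt{\log(dn)}\log d/(n^{1/2-1/q}\alpha^{1/q}\sigma_{*,W})$. A final union bound over the three high-probability events concludes each line.

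The main obstacle is the control of $\Delta_0'$ under (E.3): only polynomial moments are available, so classical exponential concentration is unavailable and one must trade off between a truncation-based bound (with a $d^{2}$ union) and a moment-based Rosenthal estimate, ensuring the resulting $\alpha^{-1/q}$ factor is compatible with the overall target rate. The same truncation argument then drives the choice of $\psi$ in controlling $M^{*}(\psi)$, so the two calculations must be coordinated. The remaining work — rescaling, union bounds, and collecting terms — is routine bookkeeping once these probabilistic envelopes are in place.
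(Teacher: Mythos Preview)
Your overall plan is exactly the paper's: rescale so that $\sigma_j=1$, set $\Sigma=\Sigma_W$, invoke Theorem \ref{thm: empirical bootstrap}, and then control $\Delta_0'$, $\Delta_1'$, $\mathcal M^{*}$ and $M^{*}(\psi)$ in high probability under each of (E.1)--(E.3), with $\psi$ chosen large enough to kill $M^{*}(\psi)$. The handling of $\Delta_1'$ via concentration of $\max_j\sum_i|X_{ij}|^4$, of $\mathcal M^{*}$ via tail bounds, and the choice of $\psi$ in each regime all match the paper's argument.

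There is, however, one genuine gap in the (E.1) case. A Hoeffding bound on the entries of $n^{-1}\sum_i(X_iX_i^T-\Ep X_iX_i^T)$ only uses $|X_{ij}X_{ik}|\le B_n^2$ and therefore yields
\[
\Big\|\tfrac1n\sum_i(X_iX_i^T-\Ep X_iX_i^T)\Big\|_\infty\ \lesssim\ B_n^{2}\sqrt{\log(d/\alpha)/n},
\]
so $(\log n)\Delta_0'\lesssim B_n^{2}(\log d)(\log n)\sqrt{\log(d/\alpha)}/(\sqrt n\,\sigma_{*,W}^{2})$, which is a factor $B_n$ worse than the stated bound and cannot be absorbed since $B_n$ is allowed to diverge. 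The paper avoids this by using a Bernstein-type inequality (concretely, Lemma \ref{lem: maximal} followed by the Fuk--Nagaev bound in Lemma \ref{lem: fuk-nagaev}), exploiting that under (E.1) one also has condition (M) and hence
\[
\max_{j,k}\sum_{i=1}^n\Ep[(X_{ij}X_{ik})^{2}]\ \le\ \max_{j}\sum_{i=1}^n\Ep X_{ij}^{4}\ \le\ nB_n^{2},
\]
which gives the sharp $B_n$ (not $B_n^{2}$) rate for $\|\widehat\Sigma-\Sigma_W\|_\infty$. Replacing ``Hoeffding'' by ``Bernstein, using the variance bound $\sum_i\Ep(X_{ij}X_{ik})^2\le nB_n^2$'' fixes the argument; the rest of your outline goes through as you wrote it.
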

\begin{remark}[Main Features of Corollary \ref{cor: emp boot simple}]
Bounds for the empirical bootstrap approximation in this corollary are comparable but slightly worse than the corresponding bounds in Corollary \ref{thm: multiplier bootstrap simple} for the multiplier bootstrap approximation. However, since we only have upper bounds on the approximation error, this does not imply that the multiplier bootstrap is necessarily more precise than the empirical bootstrap. In fact, simulations in \cite{DZ17,CCKK19,D20} suggest the opposite may be true, with approximation errors being rather similar for most practical purposes. Note also that, like in the case of Corollary \ref{thm: multiplier bootstrap simple}, under the assumption that $\sigma_{*,W}$ is bounded below from zero, bounds in Corollary \ref{cor: emp boot simple} are much better than the general bound (which does not require $\sigma_{*,W}>0$) following from the results in \cite{CCKK19}.
\qed
\end{remark}

\section{Gaussian Approximation for Special Cases: Smooth and Zero Skewness}\label{sec: special cases}

In some special cases, the bounds in Theorems \ref{t1} and \ref{thm: general unbounded} can be improved. In this section, we consider two such cases and derive an improved version of Theorem \ref{t1}. For brevity, we do not provide an improved version of Theorem \ref{thm: general unbounded}. 

An interesting practical case occurs when $X_i$'s are generated with additive Gaussian noise (for example, due to measurement error or injection of noise for data privacy). As we demonstrate here, we can improve the bound in Theorem \ref{t1} by removing a logarithmic pre-factor in this case.  The proof of this result is relatively simple, so that it may be useful to review it before reading the more complicated proofs of Theorems \ref{t1} and \ref{thm: general unbounded}.

As before, let $X_1,\dots,X_n$ be independent centered random vectors in $\mathbb R^d$ but now suppose that  we observe only their noisy versions, say $\tilde X_1,\dots,\tilde X_n$, where  $\tilde X_i = X_i + g_i$ for some centered Gaussian $g_i$ and $G= \sum_{i=1}^n g_i/\sqrt{n} \sim N(0, \Sigma_0)$,  such that $G$ is independent of $X_i$'s. Assume that $\Sigma_0$ is non-degenerate and let $\sigma_{*,0}>0$ be the square root of its smallest eigenvalue. Assume also that $\Sigma_0$ has unit diagonal entries (this assumption is not essential and is made to simplify the results below; by rescaling, similar results can be obtained as long as all diagonal entries of $\Sigma_0$ are of the same order). In addition, let $\Sigma$ be any $d\times d$ non-negative definite symmetric matrix and let $\tilde\Sigma:=\Sigma + \Sigma_0$. Denote
$
\tilde W := \sum_{i=1}^n \tilde X_i/\sqrt{n}$  and $W := \sum_{i=1}^n X_i/\sqrt{n}$, so that $\Sigma_{\tilde W}=\Sigma_{W}+\Sigma_0$, where $\Sigma_{\tilde W}:=\Ep\tilde W\tilde W^T$ and $\Sigma_W:=\Ep WW^T$.
Also, denote
$$
\tilde\varrho_{\Sigma} := \sup_{A\in\mathcal R}|\Pr (\tilde W\in A)-\Pr (\tilde Z\in A)|,\quad\tilde Z\sim N(0,\tilde \Sigma).
$$
Below, we derive a bound on $\tilde\varrho_{\Sigma}$. Since the distribution of $\tilde X_i$'s is smooth because of the presence of the additive Gaussian components $g_i$, we refer to the results below as the Gaussian approximation in the smooth case. Following the literature, e.g.~\cite{Z20}, we also sometimes refer to the distribution of $\tilde X_i$'s as quasi-Gaussian.


\begin{theorem}[Gaussian Approximation, Smooth Case]\label{t-QG}
Suppose that there are constants $\delta,c>0$ such that $\|X_i\|_\infty/\sqrt{n} \leq \delta$ for every $i=1,\dots,n$ almost surely and $\delta\sqrt{\log d}\leq c\sigma_{*,0}$. 
Then
$$
\tilde\varrho_{\Sigma} \leq C \left(\tilde\Delta_0 + \tilde\Delta_1\right),
$$
where $C>0$ is a constant depending only on $c$ and
$$
\tilde\Delta_0:=\frac{\log d}{\sigma_{*,0}^2}\|\Sigma-\Sigma_{W}\|_\infty,\quad
\tilde\Delta_1:=\frac{(\log d)^{3/2}}{n^{3/2}\sigma_{*,0}^3}\max_{1\leq j\leq d}\sum_{i=1}^n\Ep {X}_{ij}^{3}.
$$
\end{theorem}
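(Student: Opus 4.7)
My plan is to leverage the explicit smoothing supplied by the additive Gaussian component $G\sim N(0,\Sigma_0)$. Set $\tilde h(w):=\Pr(w+G\in A)$, so that $\Pr(\tilde W\in A)=\Ep[\tilde h(W)]$ and $\Pr(\tilde Z\in A)=\Ep[\tilde h(Z')]$, where $Z'\sim N(0,\Sigma)$ is independent of $G$. The indicator of a rectangle is thus replaced by a genuinely smooth function whose derivatives can be controlled by the smoothing inequalities of Section~\ref{sec: smoothing inequalities}. Introducing a bridging Gaussian $Z_W\sim N(0,\Sigma_W)$ independent of all else, I split by the triangle inequality into a Lindeberg CLT piece $|\Ep[\tilde h(W)-\tilde h(Z_W)]|$ and a Slepian Gaussian-comparison piece $|\Ep[\tilde h(Z_W)-\tilde h(Z')]|$.

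For the CLT piece I introduce auxiliary Gaussians $Y_i\sim N(0,\Cov(X_i))$ with $n^{-1/2}\sum_iY_i\stackrel{d}{=}Z_W$, swap $X_i\leftrightarrow Y_i$ one index at a time, and Taylor-expand $\tilde h$ to third order. The matched first two moments and the vanishing third moments of $Y_i$ reduce the error to
$$
\frac{1}{n^{3/2}}\sum_{i=1}^n\sum_{j,k,\ell}|\Ep[X_{ij}X_{ik}X_{i\ell}]|\,\|\partial_{jk\ell}\tilde h\|_\infty+\text{(fourth-order remainder)},
$$
where a three-way H\"older on the $i$-sum converts the triple moment into $\max_j\sum_i\Ep|X_{ij}|^3$, and the hypothesis $\delta\sqrt{\log d}\leq c\sigma_{*,0}$ together with $\|X_i\|_\infty\leq\delta\sqrt n$ absorbs the fourth-order remainder into the main term. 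For the Gaussian comparison piece I use the Slepian interpolation $V_t:=\sqrt tZ_W+\sqrt{1-t}Z'$ and Gaussian integration by parts, giving
$$
\Ep[\tilde h(Z_W)-\tilde h(Z')]=\tfrac{1}{2}\int_0^1\sum_{j,k}(\Sigma_W-\Sigma)_{jk}\Ep[\partial_{jk}\tilde h(V_t)]\,dt,
$$
which is bounded by $\tfrac{1}{2}\|\Sigma_W-\Sigma\|_\infty\sup_t\sum_{j,k}|\Ep[\partial_{jk}\tilde h(V_t)]|$.

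Both pieces assemble into the claimed $\tilde\Delta_1$ and $\tilde\Delta_0$ provided the smoothing inequalities supply
$$
\sup_t\sum_{j,k}|\Ep[\partial_{jk}\tilde h(V_t)]|\lesssim\frac{\log d}{\sigma_{*,0}^2},\qquad\sum_{j,k,\ell}\|\partial_{jk\ell}\tilde h\|_\infty\lesssim\frac{(\log d)^{3/2}}{\sigma_{*,0}^3}.
$$
Derivatives of $\tilde h(w)=\Ep[1\{w+G\in A\}]$ are obtained by differentiating under the integral, which produces second- and third-order Hermite polynomials in $\Sigma_0^{-1/2}G$; Gaussian maximal estimates of the form $\Ep\|\Sigma_0^{-1/2}G\|_\infty^k\lesssim(\log d)^{k/2}$ then drive the claimed scaling. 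For the second-order bound a Nazarov/Gauss-theorem argument exploiting the rectangular geometry of $A$ is needed to avoid a spurious factor of $d$.

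The main obstacle is the last step: producing the tight $\log d$ factor (rather than $d$) on the second-order derivative sum. The subtlety is that the boundary of $A$ is axis-aligned while $\Sigma_0$ is generically not, which prevents a direct factorization of $\tilde h$ into one-dimensional smoothed indicators and forces genuinely multivariate cancellation arguments. Once the smoothing inequalities are in hand, the Lindeberg expansion, the Slepian integration-by-parts identity, and the absorption of the fourth-order remainder under $\delta\sqrt{\log d}\leq c\sigma_{*,0}$ are routine calculus.
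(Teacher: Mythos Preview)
Your framework---smooth the indicator by $G$ to get $\tilde h$, then compare $\Ep\tilde h(W)$ with $\Ep\tilde h(Z')$---matches the paper's, and your Slepian comparison $Z_W$ vs.\ $Z'$ correctly yields $\tilde\Delta_0$. The derivative bounds you need are exactly Lemma~\ref{lemma:fk2.2} with $\phi=\infty$ and $\epsilon=1$, so that part is a black box here, not an obstacle.

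The gap is in the CLT piece. When you Lindeberg-swap $X_i$ for $Y_i\sim N(0,\Cov(X_i))$ and Taylor-expand, the remainder on the \emph{Gaussian} side cannot be absorbed into $\tilde\Delta_1$: the $Y_i$ are unbounded, with $\|Y_i/\sqrt n\|_\infty$ of typical size $\delta\sqrt{\log d}$, not $\delta$. Concretely, a third-order integral remainder for the $Y_i$ half gives a contribution of order
\[
\frac{(\log d)^{3/2}}{\sigma_{*,0}^3}\sum_i\Ep\Big\|\frac{Y_i}{\sqrt n}\Big\|_\infty^3
\;\asymp\;\frac{(\log d)^{3}}{n^{3/2}\sigma_{*,0}^3}\sum_i\Big(\max_j\Ep X_{ij}^2\Big)^{3/2},
\]
which for i.i.d.\ Rademacher $X_{ij}$ equals $(\log d)^{3}/(\sqrt n\,\sigma_{*,0}^3)$, a factor $(\log d)^{3/2}$ larger than $\tilde\Delta_1$. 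Going to fourth order does not help: the extra factor $\delta\sqrt{\log d}/\sigma_{*,0}\le c$ only absorbs the $X_i$-remainder, since the hypothesis $\delta\sqrt{\log d}\le c\sigma_{*,0}$ controls $\|X_i/\sqrt n\|_\infty$, not $\|Y_i/\sqrt n\|_\infty$.

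The paper sidesteps this by \emph{not} introducing auxiliary Gaussians. It interpolates $W$ and $Z$ directly via $W(s)=\sqrt{1-s}\,W+\sqrt s\,Z$ and rewrites the $s$-derivative as $\langle\Sigma,\nabla^2 h(W(s))\rangle-(1-s)^{-1/2}\langle W,\nabla h(W(s))\rangle$, using Stein's identity so that the Gaussian $Z$ contributes exactly (no Taylor expansion for it). Then Stein's leave-one-out on $\langle W,\nabla h\rangle=\sum_i\langle\xi_i,\nabla h\rangle$ produces Taylor remainders that involve only the bounded $\xi_i=X_i/\sqrt n$, so $\|\xi_i\|_\infty\le\delta\le\sigma_{*,0}\eta$ triggers the local-stability bound in Lemma~\ref{lemma:fk2.2} and the third-order remainder is exactly $\tilde\Delta_1$. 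The $\tilde\Delta_0$ term arises by writing $\Sigma=(\Sigma-\Sigma_W)+\Sigma_W$ inside the same integral. Your triangle split is salvageable if you replace the Lindeberg step by this Slepian--Stein interpolation between $W$ and $Z_W$; as written, the $Y_i$-remainder does not close.
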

\begin{remark}[Optimality of Theorem \ref{t-QG}]\label{rem:t-QG}
In comparison with Theorem \ref{t1} and Corollary \ref{thm: simple}, Theorem \ref{t-QG} does not contain the logarithmic pre-factor. Assuming that (i) $n^{-1}\sum_{i=1}^n \Ep X_{i j}^2\leq 1$ for all $j=1,\dots,d$, (ii) $|X_{ij}|\leq B_n$ almost surely for all $i=1,\dots,n$ and $j=1,\dots,d$ and some constant $B_n>0$, possibly depending on $n$, and (iii) $\sigma_{*,0}^3\geq b$ for some constant $b>0$, it follows from Theorem \ref{t-QG} that
\begin{equation}\label{eq: optimal bound quasi-gaussian}
\tilde\varrho_{\Sigma_W}\leq \frac{C B_n(\log d)^{3/2}}{\sqrt n},
\end{equation}
where $C>0$ is a constant depending only on $b$. This bound is optimal in the quasi-Gaussian case with respect to both the sample size $n$ and the dimension $d$, as follows from Proposition 1.1 in \cite{FK20}, which yields a lower bound and allows the lower bound to be achieved by the quasi-Gaussian distributions. Hence, it is not possible to obtain a better bound without imposing further conditions, such as zero skewness or symmetry of the distribution of $X_i$'s.
\qed
\end{remark}


Our second example in this section demonstrates that, with a bit more structure, namely assuming the zero skewness condition, we can further improve the bounds. Most notably, the theorem below implies dependence on $n$ via $1/n$ instead of $1/\sqrt n$ for uniformly bounded $X_i$'s.
\begin{theorem}[Gaussian Approximation, Smooth and Zero Skewness Case]\label{t-QG2}
Under the assumptions of Theorem \ref{t-QG}, assume additionally that
\ben{\label{zero-skew}
\Ep[X_{ij}X_{ik}X_{il}]=0\qquad\text{for all }i=1,\dots,n\text{ and }j,k,l=1,\dots,d.
}
Then
$$
\tilde\varrho_{\Sigma} \leq C \left(\tilde\Delta_0 + \tilde\Delta_2\right),
$$
where $C>0$ is a constant depending only on $c$ and
$$
\tilde\Delta_0:=\frac{\log d}{\sigma_{*,0}^2}\|\Sigma-\Sigma_{W}\|_\infty,\quad
\tilde\Delta_2:=\frac{(\log d)^2}{n^2\sigma_{*,0}^4}\max_{1\leq j\leq d}\sum_{i=1}^n\Ep {X}_{ij}^{4}.
$$
\end{theorem}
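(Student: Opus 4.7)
The plan is to mirror the Slepian-Stein interpolation that underlies Theorem \ref{t-QG} but to push the Taylor expansion one extra order so that the leading non-Gaussian contribution is driven by fourth moments of $X_i$, at which point the zero-skewness hypothesis \eqref{zero-skew} kills what would otherwise be the dominant third-moment term.

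As in the proof of Theorem \ref{t-QG}, fix $A\in\mathcal R$ and replace $\mathbf 1_A$ by a smoothed test function $f=f_A$ constructed via the mixed smoothing of Section \ref{sec: smoothing inequalities}: the Gaussian convolution with $\Sigma_0$ supplied by the component $G$ renders $f$ infinitely differentiable, while an additional Lipschitz pre-smoothing of width $\beta$ yields sparse partial-derivative bounds. The cost of this approximation is a Gaussian anti-concentration term of order $\beta\sqrt{\log d}/\sigma_{*,0}$ in both $\Pr(\tilde W\in A)$ and $\Pr(\tilde Z\in A)$, which I make negligible by taking $\beta$ polynomially small in $n$. Introduce independent centered Gaussians $Y_1,\dots,Y_n$ with common covariance $\Sigma$, drawn independently of $X_1,\dots,X_n,G$, and set $W_*:=n^{-1/2}\sum_iY_i$, so that $W_*+G\sim N(0,\tilde\Sigma)$. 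Interpolate via $V(t):=\sqrt{1-t}\,W+\sqrt t\,W_*$ and write
\begin{equation*}
\Ep f(W_*+G)-\Ep f(\tilde W)=\int_0^1\frac{d}{dt}\Ep f(V(t)+G)\,dt.
\end{equation*}

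For each $i$, the $Y_i$-directional piece of $(d/dt)\Ep f$ is handled by Stein's identity, which collapses the linear-in-$Y_i$ term into $n^{-1}\Tr[\Sigma\cdot\Ep\Hess f(V(t)+G)]$. The $X_i$-directional piece is expanded by Taylor around the $X_i$-free point $V^{(i)}(t)+G:=V(t)-\sqrt{1-t}X_i/\sqrt n+G$, and the resulting expectation is split using the independence $X_i\perp(V^{(i)}(t),G)$. The zeroth-order Taylor term contributes $0$ by $\Ep X_i=0$; the first-order term yields $n^{-1}\Tr[\Cov(X_i)\Ep\Hess f(V^{(i)}(t)+G)]$, which after comparison with the Stein expression leaves the covariance mismatch
\begin{equation*}
\tfrac12\Tr\bigl[(\Sigma-\Sigma_W)\Ep\Hess f(V(t)+G)\bigr]
\end{equation*}
plus a fourth-order cross-term arising from the discrepancy between $V(t)$ and $V^{(i)}(t)$; the second-order Taylor term vanishes by the zero-skewness hypothesis \eqref{zero-skew}; and the third-order Taylor remainder produces the fourth-moment sum $n^{-2}\sum_{jklm}\Ep[X_{ij}X_{ik}X_{il}X_{im}]\cdot\Ep\partial_{jklm}f(\xi_i)$ at some interpolation point $\xi_i$. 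The covariance-mismatch term is bounded by $\|\Sigma-\Sigma_W\|_\infty\sum_{jk}|\Ep\partial_{jk}f|\lesssim\tilde\Delta_0$ via the second-derivative smoothing inequality from Section \ref{sec: smoothing inequalities}.

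It remains to control the fourth-order contribution summed over $i$. Applying H\"older's inequality iteratively,
\begin{equation*}
\sum_{i=1}^n|\Ep[X_{ij}X_{ik}X_{il}X_{im}]|\leq\Bigl(\textstyle\sum_i\Ep X_{ij}^4\Bigr)^{1/4}\Bigl(\textstyle\sum_i\Ep X_{ik}^4\Bigr)^{1/4}\Bigl(\textstyle\sum_i\Ep X_{il}^4\Bigr)^{1/4}\Bigl(\textstyle\sum_i\Ep X_{im}^4\Bigr)^{1/4}\leq\max_{1\leq j\leq d}\sum_{i=1}^n\Ep X_{ij}^4,
\end{equation*}
which is precisely the moment factor appearing in $\tilde\Delta_2$. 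Pairing this with the fourth-derivative smoothing inequality
\begin{equation*}
\sum_{j,k,l,m=1}^d|\Ep\partial_{jklm}f(V(t)+G)|\lesssim\frac{(\log d)^2}{\sigma_{*,0}^4}
\end{equation*}
from Section \ref{sec: smoothing inequalities} and integrating over $t\in[0,1]$ delivers the $\tilde\Delta_2$ term. The main technical obstacle is establishing the sharp fourth-derivative smoothing inequality with the $(\log d)^2/\sigma_{*,0}^4$ scaling; once the mixed Lipschitz-plus-Gaussian smoothing of Section \ref{sec: smoothing inequalities} produces this bound (as it is designed to do at higher orders), the interpolation argument above closes immediately.
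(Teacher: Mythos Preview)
Your overall strategy matches the paper's: carry the leave-one-out Taylor expansion in the Slepian interpolation one order further than in Theorem \ref{t-QG}, use \eqref{zero-skew} to kill the third-moment term, and bound the residual fourth-order term via the fourth-derivative case of Lemma \ref{lemma:fk2.2}. Two points need correction.

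First, the Lipschitz pre-smoothing layer (your parameter $\beta$) is unnecessary in the smooth case. The additive Gaussian component $G$ already makes $h^A(x)=\Ep 1_A(x+G)$ infinitely differentiable with the sharp derivative bounds of Lemma \ref{lemma:fk2.2} (applied with $\phi=\infty$, $\epsilon=1$), so there is no anti-concentration cost to absorb. The paper works directly with $h^A$; introducing $\beta$ only adds bookkeeping.

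Second, and more substantively, your fourth-order remainder is not correctly factored. You write it as $n^{-2}\sum_{jklm}\Ep[X_{ij}X_{ik}X_{il}X_{im}]\cdot\Ep\partial_{jklm}f(\xi_i)$ and then apply H\"older to $\sum_i|\Ep[X_{ij}X_{ik}X_{il}X_{im}]|$, but the interpolation point $\xi_i$ depends on $X_i$, so the expectation does not split. What closes the argument is exactly the hypothesis $\delta\sqrt{\log d}\leq c\sigma_{*,0}$ together with the \emph{locally uniform} version of the smoothing bound (the $\sup_{y\in R(0,\sigma_{*,0}\eta)}$ in Lemma \ref{lemma:fk2.2}): since $\|X_i\|_\infty/\sqrt n\leq\delta$, the point $\xi_i$ lies within an $\ell_\infty$-ball of radius $O(\delta)\leq\sigma_{*,0}\eta$ around the $X_i$-free point $V^{(i)}(t)+G$. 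One bounds $|\partial_{jklm}f(\xi_i)|\leq\sup_{\|y\|_\infty\leq\sigma_{*,0}\eta}|\partial_{jklm}f(V^{(i)}(t)+G+y)|$, factors the expectation by independence of $X_i$ and $(V^{(i)}(t),G)$, and only then sums over $j,k,l,m$ via Lemma \ref{lemma:fk2.2}. This is exactly how the paper proves the third-order bound \eqref{r1-loo-est}, and the same device carries over verbatim at fourth order. Without this step the role of the assumption $\delta\sqrt{\log d}\leq c\sigma_{*,0}$ is invisible and the factoring you wrote is unjustified.
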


\begin{remark}[Optimality of Theorem \ref{t-QG2}]
Assuming that (i) $n^{-1}\sum_{i=1}^n \Ep X_{i j}^2\leq 1$ for all $j=1,\dots,d$, (ii) $|X_{ij}|\leq B_n$ almost surely for all $i=1,\dots,n$ and $j=1,\dots,d$ and some constant $B_n>0$, possibly depending on $n$, and (iii) $\sigma_{*,0}^3\geq b$ for some constant $b>0$, it follows from Theorem \ref{t-QG} that
\begin{equation}\label{eq: quasi gaussian zero skewness bound}
\tilde\varrho_{\Sigma_W}\leq \frac{C B_n^2(\log d)^{2}}{n},
\end{equation}
where $C>0$ is a constant depending only on $b$. This bound is optimal in the quasi-Gaussian case with zero skewness with respect to both the sample size $n$ and the dimension $d$, as we prove in Proposition \ref{prop:lower-bound2} below. Moreover, neither the zero skewness nor quasi-Gaussian conditions can be dropped in general to get such dependences. In fact, if the former is not satisfied, we can at best get \eqref{eq: optimal bound quasi-gaussian}, as discussed in Remark \ref{rem:t-QG} above. 
Similarly, it is well-known that the dependence on $n$ should be $1/\sqrt n$ in the normal approximation rate for sums of independent Rademacher variables (see e.g.~page 112 of \cite{Petrov75}), so we cannot drop the quasi-Gaussian assumption in general to get a bound proportional to $1/n$. Finally, note that \eqref{eq: quasi gaussian zero skewness bound} is substantially better than \eqref{eq: optimal bound quasi-gaussian}, meaning that imposing the zero skewness condition is rather helpful in the quasi-Gaussian case.
\qed
\end{remark}

We conclude this section with the proposition that provides a lower bound on the convergence rate of $\tilde\varrho_{\Sigma_W}$ under the quasi-Gaussian and zero skewness conditions and demonstrates that the convergence rate in \eqref{eq: quasi gaussian zero skewness bound} is sharp:

\begin{proposition}\label{prop:lower-bound2}
Let $X=(X_{ij})_{i,j=1}^\infty$ be an array of i.i.d.~random variables such that $\|X_{ij}\|_{\psi_1}<\infty$, $\Ep[X_{ij}]=0$, $\Ep[X_{ij}^2]=1$, $\Ep[X_{ij}^3]=0$ and $\gamma:=\Ep[X_{ij}^4]-3\neq0$. 
Let $W=n^{-1/2}\sum_{i=1}^n X_i$ with $X_i:=(X_{i1},\dots,X_{id})^T$. 
Suppose that $d$ depends on $n$ so that $(\log d)^2/n\to0$ and $(\log d)^3/n\to\infty$ as $n\to\infty$. 
Also, let $Z\sim N(0, I_d)$.
Then
\[
\limsup_{n\to\infty}\frac{n}{(\log d)^2}\sup_{x\in\mathbb{R}}\left|\Pr\left(\max_{1\leq j\leq d}W_j\leq x\right)-\Pr\left(\max_{1\leq j\leq d}Z_j\leq x\right)\right|>0.
\]
\end{proposition}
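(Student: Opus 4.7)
\textbf{Proof plan for Proposition \ref{prop:lower-bound2}.}
Since the $X_{ij}$ are i.i.d., the components $W_1,\dots,W_d$ are i.i.d.\ copies of the one-dimensional standardised sum $S_n := n^{-1/2}\sum_{i=1}^n X_{i1}$; letting $F_n$ denote the CDF of $S_n$, we have $\Pr(\max_{j\leq d} W_j \leq x) = F_n(x)^d$ and $\Pr(\max_{j\leq d} Z_j \leq x) = \Phi(x)^d$, so it suffices to evaluate $|F_n(x)^d - \Phi(x)^d|$ at a single well-chosen point $x_d$. I would take $x_d$ defined by $1-\Phi(x_d)=1/d$, which gives $x_d = \sqrt{2\log d}\,(1+o(1))$ and $\Phi(x_d)^d \to e^{-1}$; the Mills' ratio asymptotic $\phi(x_d) = x_d(1-\Phi(x_d))(1+o(1)) = (x_d/d)(1+o(1))$ is the main driver of the subsequent estimates.

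The main technical input is a second-order Edgeworth expansion with a tail-weighted remainder,
\be{F_n(x) = \Phi(x) - \frac{\gamma}{24n}H_3(x)\phi(x) + R_n(x),\qquad H_3(x) := x^3-3x,}
which is available since $\Ep[X_{11}]=0$, $\Ep[X_{11}^2]=1$, $\Ep[X_{11}^3]=0$, and $\|X_{11}\|_{\psi_1}<\infty$ ensures all moments exist. In the non-lattice case one has $|R_n(x)| \leq C(1+|x|)^4\phi(x)/n^{3/2}$; in the lattice case (e.g.\ Rademacher) there is an additional Esseen oscillation of order $\phi(x)/\sqrt n$; see Petrov \cite{Petrov75}. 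At $x=x_d$, Mills' ratio yields $d\phi(x_d) \sim x_d$ and hence
\be{d\cdot\tfrac{\gamma}{24n}H_3(x_d)\phi(x_d) = \tfrac{\gamma(x_d^3-3x_d)x_d}{24n}(1+o(1)) = \tfrac{\gamma(\log d)^2}{6n}(1+o(1)),}
using $x_d^4 = 4(\log d)^2(1+o(1))$. Writing $1-F_n(x_d) = (1+\beta_n)/d$, this identifies $\beta_n = \gamma(\log d)^2/(6n)(1+o(1))$ up to the error $dR_n(x_d)$.

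The final step is a Taylor expansion of the power: since $(\log d)^2/n\to 0$ forces $\beta_n\to 0$,
\be{F_n(x_d)^d = (1-(1+\beta_n)/d)^d = e^{-1}(1-\beta_n+o(\beta_n)),\qquad \Phi(x_d)^d = e^{-1}(1+O(1/d)),}
so $F_n(x_d)^d - \Phi(x_d)^d = -e^{-1}\beta_n + o(\beta_n)$. To conclude I must verify that the error contributions are $o((\log d)^2/n)$: the smooth Edgeworth remainder gives $d|R_n(x_d)| = O(x_d^4/n^{3/2}) = O((\log d)^2/n^{3/2})$, automatically $o((\log d)^2/n)$; the $O(1/d)$ piece from $\Phi(x_d)^d$ is negligible because $(\log d)^3/n\to\infty$ forces $d$ to grow super-polynomially in $n$; and the lattice oscillation, if present, contributes $d\cdot O(\phi(x_d)/\sqrt n) = O(1/\sqrt n)$, which is $o((\log d)^2/n)$ precisely because the hypothesis $(\log d)^3/n\to\infty$, combined with $\log d\to\infty$, gives $(\log d)^4/n\to\infty$. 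Hence $|F_n(x_d)^d - \Phi(x_d)^d| = |\gamma|(\log d)^2/(6en)(1+o(1))$, delivering the required positive liminf with constant $|\gamma|/(6e)$.

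The main obstacle I expect is securing a sharp enough Edgeworth expansion in the moderate-deviation regime $x_d\asymp\sqrt{2\log d}\to\infty$: a uniform-in-$x$ statement with $o(1/n)$ error would be inadequate because the remainder gets multiplied by $d$, but the tail-weighted form $|R_n(x)|\lesssim(1+|x|)^4\phi(x)/n^{3/2}$ combined with $d\phi(x_d)\sim x_d$ yields an $O((\log d)^2/n^{3/2})$ contribution, exactly one $1/\sqrt n$ factor smaller than the signal. The secondary subtlety is that the hypotheses allow lattice distributions, for which an Esseen sawtooth correction of order $1/\sqrt n$ survives after multiplication by $d$; this is absorbed by the hypothesis $(\log d)^3/n\to\infty$, which is therefore sharp for this argument.
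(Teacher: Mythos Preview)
Your approach is essentially the paper's: evaluate at $x_n$ with $1-\Phi(x_n)\sim 1/d$, use a precise tail expansion for $\Pr(W_1>x_n)$, and Taylor-expand the $d$th power; you arrive at the same constant $|\gamma|/(6e)$. Your direct evaluation of $F_n(x_d)^d$ is in fact cleaner than the paper's detour through a Chen--Stein Poisson approximation, which is redundant here since the $W_j$ are i.i.d.\ and $\Pr(\max_j W_j\leq x_n)=(1-\lambda_n/d)^d$ exactly.

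The one point to tighten is the tail expansion itself: the additive Edgeworth remainder bound $|R_n(x)|\lesssim(1+|x|)^4\phi(x)/n^{3/2}$ in the moderate-deviation regime $x\to\infty$ is not a standard citable statement, and your separate handling of the lattice oscillation is unnecessary. The paper instead invokes the Cram\'er--Petrov moderate deviation expansion (Theorem~3, Chapter~VIII of \cite{Petrov75}), valid under $\|X_{11}\|_{\psi_1}<\infty$ for both lattice and non-lattice laws:
\[
\frac{\Pr(W_1>x_n)}{1-\Phi(x_n)}=\exp\Bigl(\frac{\gamma x_n^4}{24n}\Bigr)\Bigl(1+O\Bigl(\frac{x_n+1}{\sqrt n}\Bigr)\Bigr).
\]
Taylor-expanding the exponential recovers your Edgeworth correction, and the single error term $O((x_n+1)/\sqrt n)$ is $o(x_n^4/n)$ precisely when $(\log d)^3/n\to\infty$. (Two minor arithmetic slips in your write-up: $d|R_n(x_d)|$ picks up an extra factor $x_d$ from $d\phi(x_d)\sim x_d$, and the lattice contribution after multiplication by $d$ is $O(\sqrt{\log d/n})$ rather than $O(1/\sqrt n)$; neither affects the conclusion.)
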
 
\begin{remark}[Relation to Previous Work]
This proposition complements Theorem 3 in \cite{DL20}, who showed that the Gaussian approximation with vanishing error is not possible if $(\log d)^2/n^{1 + \delta}\nrightarrow 0$ for some $\delta>0$ and $X_{ij}$'s are Rademacher random variables.
\qed
\end{remark}

\section{Application to Many Local Means Problem}\label{sub}
An interesting setting that illustrates the value of our new bounds is the problem of many local
means, which plays a fundamental role in nonparametric statistics.  In this problem,
the dimensionality of the problem actually shows up in the envelope and moments of $X_i$'s and not just via $\log d$. 
We illustrate this point with the following simple example. Consider i.i.d. random vectors $V_1,\dots,V_n$ in $\Bbb{R}^{\kappa}$ and non-overlapping
regions $(R_j)_{j=1}^d$ that partition the support of $V_i$'s such that $p_j := \Pr \{V_i \in R_j\}= p$ for all $j=1,\dots,d$ and $d=1/p$. Define components
of $X_i$ via:
$$
X_{ij} = \frac{1\{V_i \in R_j\} - p}{\sqrt{ p(1- p)}}, \quad j=1,\dots,d,
$$
and set $W:=n^{-1/2}\sum_{i=1}^n X_i$. The distribution of $W$ over the class of rectangles $\mathcal R$ is of interest in testing hypotheses about the means of $X_{ij}$'s. 

To apply our results in this setting, observe that
$$
\Sigma_W = \Ep WW^T =   \frac{p}{{p(1-p)}} I_d  - \frac{p^2}{{p(1-p)}} 1_d 1_d',
$$
where $1_d:=(1,\dots,1)^T\in\mathbb R^d$. The smallest eigenvalue of $\Sigma_W$ is $$ \frac{p}{{p(1-p)}}- \frac{p^2}{{p(1-p)}} d = 0,$$
so this is actually a degenerate case.  On the other hand, we have for $\Sigma :=  \frac{p}{{p(1-p)}} I_d$ that
$$
\|\Sigma_W  -  \Sigma\|_\infty \leq p/(1-p) = 1/(d-1)
$$
and all eigenvalues of $\Sigma$ are bounded below from zero. Thus, applying \eqref{eq: degenerate corollary} and Theorem \ref{t1} with $\Sigma =  \frac{p}{{p(1-p)}} I_d$ and $\delta := \sqrt{d/n}$ to bound $\varrho_{\Sigma}$, we have that
\begin{equation}\label{eq: rho example}
\varrho \lesssim \frac{(\log d)^2}{d} + \sqrt{\frac{ d {(\log d)^3}}{n}} \log n.
\end{equation}
This bound may be rather poor if $d\to\infty$ slowly. Fortunately, we can combine \eqref{eq: rho example} with the bound we previously derived in \cite{CCKK19} to obtain
\begin{equation}\label{combo}
\varrho\lesssim \left( \frac{(\log d)^2}{d} + \sqrt{\frac{ d {(\log d)^3}}{n}} \log n \right) \wedge \left(\frac{ d {(\log n)^5}}{n}\right)^{1/4},
\end{equation}
which is much better than \eqref{eq: rho example} when $d\to\infty$ slowly. Specifically, \eqref{combo} gives
\begin{equation}\label{eq: d conv new}
\varrho \to 0\quad\text{if}\quad\frac{d(\log n)^5}{n}\to 0.
\end{equation}


Turning now to the alternative bounds in the literature, we note that the direct application of results in  \cite{L20} and \cite{KR20} give an infinite bound on $\varrho$ because $\Sigma_W$ is degenerate. This is of course an unfair comparison, so it is possible to modify the arguments in   \cite{L20} and \cite{KR20} 
to have the dependencies in their bounds via $\| \Sigma - \Sigma_W\|_\infty$, as we did in Remark \ref{rem: degenerate cases}, and obtain
\begin{equation}\label{eq: kr}
\varrho\lesssim \frac{(\log d)^2}{d} + \left(\sqrt{\frac{d^3(\log d)^4\log n}{n}} + \sqrt{\frac{(\log d)^7 \log  (dn) }{n}}\right)\log n
\end{equation}
This bound gives, when $d \to \infty$:
\begin{equation}\label{eq: d conv alt}
\varrho \to 0\quad\text{if}\quad \frac{d^3(\log n)^7}{n}\to 0.
\end{equation}
Comparing \eqref{eq: d conv new} with \eqref{eq: d conv alt}, we conclude that \eqref{combo} is substantially better than \eqref{eq: kr}.


In addition, it is possible to obtain a bound on $\varrho$ via the Hungarian coupling. In particular, results in \cite{R94} and \cite{GN10} imply that one can construct a centered Gaussian random vector $G$ in $\mathbb R^d$ such that
$$
\| W - G\|_\infty \lesssim  \sqrt{\frac{\log n}{ (n p)^{1/\kappa}}} + \sqrt{ \frac{\log^2 n}{ np }}
$$
almost surely.  Moreover, \cite{Beck85} showed that that bound is sharp up to possible log factors when $\kappa \geq 2$. 
Combining this bound with the  anti-concentration inequality in Lemma \ref{lem: anticoncentration} implies
\begin{equation}\label{eq: rgn}
\varrho \lesssim \sqrt{ \log d} \left( \sqrt{\frac{\log n}{ (n p)^{1/\kappa}}} + \sqrt{ \frac{\log^2 n}{ np }} \right)
\end{equation}
When $d\geq n^{1/3}$, which is the most relevant case, this bound is better than that in \eqref{combo} by a $(\log n)^2$ factor for $\kappa=1$ but worse for $\kappa\geq 2$. For $\kappa\geq 3$, this bound is much worse than that in \eqref{combo} by a polynomial-in-$n$ factor regardless of $d$.


\section{Mixed Smoothing Inequalities}\label{sec: smoothing inequalities}
Let $\phi>0$, $\epsilon\in[0,1]$, and $A=\prod_{j=1}^d(a_j,b_j]\in\mathcal R$. Also, let $\Sigma$ be a $d\times d$ symmetric positive definite matrix with unit diagonal entries, and let $\sigma_*>0$ be the square root of the smallest eigenvalue of $\Sigma$. Consider functions
$g^{\phi}\colon\mathbb{R}\to\mathbb{R}$, $m^{A,\phi}\colon\mathbb{R}^{d}\to\mathbb{R}$,
and $\rho^{A,\phi,\epsilon,\Sigma}\colon\mathbb{R}^{d}\to\mathbb{R}$ by
\[
g^{\phi}(t):=\begin{cases}
1 & \text{if }t\leq0,\\
1-\phi t & \text{if }0<t<1/\phi,\\
0 & \text{if }t\geq1/\phi,
\end{cases}
\]
\begin{equation}\label{eq: m function}
m^{A,\phi}(w):=g^{\phi}\left(\max_{1\leq j\leq d}[(w_{j}-b_{j})\vee(a_j-w_j)]\right),\quad w\in\mathbb{R}^{d},
\end{equation}
and
\begin{equation}\label{eq: rho function}
\rho^{A,\phi,\epsilon,\Sigma}(w):=\mathbb{E}m^{A,\phi}(w+\epsilon Z),\quad w\in\mathbb{R}^{d},
\end{equation}
where $Z$ is a centered normal random vector in $\mathbb{R}^{d}$ with covariance matrix $\Sigma$.
For large $\phi$ and small $\epsilon$, the function $\rho^{A,\phi,\epsilon,\Sigma}(\cdot)$
provides a good approximation to the indicator function $1_A$
but, in contrast to the indicator function, is smooth. In particular,
we will prove the following inequalities, which play a key role in obtain sharp bounds for the Gaussian approximation.
\begin{lemma}\label{lem: smoothing inequality mixed 1}
Let $v\in\mathbb Z$ and $K\in\mathbb R$ be such that $v\geq1$ and $K>0$. Set $\eta = \eta_d = K/\sqrt{\log d}$. Then
\begin{equation}
\sup_{A\in\mathcal R}\sup_{w\in\mathbb R^d}\sum_{j_{1},\dots,j_{v}=1}^{d}\sup_{y\in R(0,\epsilon\sigma_*\eta)}|\partial_{j_{1},\dots,j_{v}}\rho^{A,\phi,\epsilon,\Sigma}(w+y)|\leq C\frac{\phi(\log d)^{(v-1)/2}}{(\epsilon\sigma_*)^{v-1}},\label{eq: lemma}
\end{equation}
where $C>0$ is a constant depending only on $v$ and $K$.
\end{lemma}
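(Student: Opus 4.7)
The plan is to prove the bound via Gaussian integration by parts combined with the Lipschitz regularity of $m^{A,\phi}$, pointwise in $A\in\mathcal R$ and $w\in\mathbb R^d$. Starting from $\rho^{A,\phi,\epsilon,\Sigma}(w+y)=\Ep[m^{A,\phi}(w+y+\epsilon Z)]$ with $Z\sim N(0,\Sigma)$, a change of variables to move all $w$-dependence into the density of $\epsilon Z$, followed by a single integration by parts, transfers $v-1$ of the $v$ partial derivatives onto the Gaussian density and yields the identity
$$\partial_{j_1,\dots,j_v}\rho^{A,\phi,\epsilon,\Sigma}(w+y)=\frac{1}{\epsilon^{v-1}}\Ep\bigl[(\partial_{j_1}m^{A,\phi})(w+y+\epsilon Z)\,H_{(j_2,\dots,j_v)}(Z)\bigr],$$
where $H_{\beta}(z):=(-1)^{|\beta|}f_\Sigma(z)^{-1}\partial^{\beta}f_\Sigma(z)$ is the multivariate Hermite polynomial of degree $v-1$ associated with $\Sigma$ and $f_\Sigma$ is the density of $Z$. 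Only one derivative lands on $m^{A,\phi}$, which matches its mere Lipschitz regularity, and the prefactor $\epsilon^{-(v-1)}$ already matches the denominator of the target bound.

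The factor $\phi$ in the bound arises from the $\ell_1$-gradient estimate $\sum_{j_1=1}^d|\partial_{j_1}m^{A,\phi}(u)|\leq\phi$ for a.e.\ $u\in\mathbb R^d$, which holds because $g^\phi$ is $\phi$-Lipschitz and the function $h^A(w):=\max_j[(w_j-b_j)\vee(a_j-w_j)]$ has $\sum_j|\partial_j h^A|\leq 1$ a.e.\ (only the unique arg-max coordinate contributes). Crucially, the supports $S_{j_1}:=\{u:h^A_{j_1}(u_{j_1})=h^A(u)\in(0,1/\phi)\}$ of the various $\partial_{j_1}m^{A,\phi}$ are pairwise disjoint across $j_1$, so after passing absolute values inside the expectation and summing, the sum over $j_1$ collapses into $\phi\cdot 1_{S}$ with $S:=\bigcup_{j_1}S_{j_1}=\{u:0<h^A(u)<1/\phi\}$, reducing the problem to a Hermite-polynomial estimate of the form
$$\Ep\Bigl[1_S(w+\epsilon Z)\sum_{j_2,\dots,j_v=1}^d|H_{(j_2,\dots,j_v)}(Z)|\Bigr]\lesssim \sigma_*^{-(v-1)}(\log d)^{(v-1)/2}.$$

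The Hermite estimate is the technical heart of the argument: expanding each $H_{\beta}(Z)$ as a polynomial in the entries of $\Sigma^{-1}Z$ via the Fa\`a di Bruno formula, writing $\Sigma^{-1}Z=\Sigma^{-1/2}\xi$ with $\xi\sim N(0,I_d)$, and using $\|\Sigma^{-1/2}\|_{\mathrm{op}}=\sigma_*^{-1}$, the Gaussian concentration $\Ep\max_{1\leq j\leq d}|\xi_j|^{v-1}\lesssim(\log d)^{(v-1)/2}$ on a high-probability event supplies the $\log d$ powers; the indicator $1_S$ is used to localize and enable efficient control of the leading-order cross terms.

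Finally, the supremum over $y\in R(0,\epsilon\sigma_*\eta)$ is handled by passing the sup inside the expectation via $\sup_y|\Ep F(y,Z)|\leq\Ep\sup_y|F(y,Z)|$ and observing that an $\ell_\infty$-perturbation of size $\epsilon\sigma_*\eta=\epsilon\sigma_*K/\sqrt{\log d}$ broadens the supports $S_{j_1}$ into essentially disjoint enlargements, preserving the collapse of $\sum_{j_1}$ up to a universal constant that depends only on $K$. I expect the hard part to be exactly this robustness argument: the disjointness-based collapse of $\sum_{j_1}$ must survive small $\ell_\infty$-perturbations, and its interplay with the Hermite estimate---calibrated through $\eta\sqrt{\log d}=K=O(1)$---is what delivers the tight $(\log d)^{(v-1)/2}/\sigma_*^{v-1}$ dependence. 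This mixed calibration (Lipschitz scale $1/\phi$ against Gaussian concentration scale $\sqrt{\log d}/(\epsilon\sigma_*)$) is the ``mixed smoothing'' perspective advertised in the introduction, distinguishing the present bound from the pure convolution smoothing of \cite{FK20,L20,KR20}.
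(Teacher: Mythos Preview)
Your proposal has a genuine gap at what you correctly identify as ``the technical heart of the argument'': the Hermite estimate
\[
\Ep\Bigl[1_S(w+\epsilon Z)\sum_{j_2,\dots,j_v=1}^d|H_{(j_2,\dots,j_v)}(Z)|\Bigr]\lesssim \sigma_*^{-(v-1)}(\log d)^{(v-1)/2}
\]
is false in general, and the mechanism you sketch cannot deliver it. Take $\Sigma=I_d$, $\epsilon=1$ and $v=2$: then $H_{(j_2)}(Z)=Z_{j_2}$ and the inner sum is $\|Z\|_1$, whose typical size is of order $d$, not $\sqrt{\log d}$. Passing absolute values inside the expectation is exactly where you lose: once you bound $|\Ep[\partial_{j_1}m\cdot H_\beta]|$ by $\Ep[|\partial_{j_1}m|\cdot|H_\beta|]$, the sum over $\beta=(j_2,\dots,j_v)$ is a sum of $d^{v-1}$ nonnegative terms with no cancellation left to exploit. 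The indicator $1_S$ does not compensate: by Nazarov's inequality $\Pr(w+\epsilon Z\in S)\lesssim\sqrt{\log d}/(\epsilon\phi)$, and since on $S$ the coordinates $Z_{j_2},\dots,Z_{j_v}$ with $j_q$ distinct from the arg-max are essentially unconstrained, one only recovers a bound of order $d^{v-1}\sqrt{\log d}$ after the $j_1$-collapse, not $\phi(\log d)^{(v-1)/2}$. The reference to $\Ep\max_j|\xi_j|^{v-1}\lesssim(\log d)^{(v-1)/2}$ is a red herring: the object to control is a \emph{sum} over $j_2,\dots,j_v$, not a maximum.

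The paper's proof achieves the bound by a fundamentally different device that preserves, rather than destroys, the sign structure. After reducing to $\epsilon=1$, $\Sigma=I_d$ and to the half-space $A=\mathbb R^d_-$, Step~2 shows that derivatives of $\tilde\rho$ are bounded by $\phi$ times derivatives of the deterministic function $\Psi(w)=\int_{-\infty}^0\prod_j\Phi_1(t-w_j)\,dt$. The crucial observation (Step~3) is that for \emph{distinct} indices $(j_1,\dots,j_v)$ one has $(-1)^v\partial_{j_1\cdots j_v}\Psi(w)\geq0$, because the mixed partial equals $\int_{-\infty}^0\prod_{q}\varphi_1(t-w_{j_q})\prod_{k\neq j_1,\dots,j_v}\Phi_1(t-w_k)\,dt$. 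Hence the absolute-value sum over distinct indices \emph{equals} the signed sum, and the latter telescopes via $\sum_{j_1}\pi_{j_1}=1$ to an expression that Lemma~2.2 of \cite{FK20} controls by $(\log d)^{(v-1)/2}$. Repeated indices are handled inductively. Your integration-by-parts identity is essentially the paper's Step~2 in disguise, but the subsequent step requires keeping the signed structure intact; taking $|H_\beta|$ forecloses that route.
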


\begin{lemma}\label{lemma:fk2.2}
Let $v\in\mathbb{Z}$ and $K\in\mathbb{R}$ be such that $v\geq1$ and $K>0$. Set $\eta=\eta_d=K/\sqrt{\log d}$. 
Then
\begin{equation*}
\sup_{A\in\mathcal R}\sup_{w\in\mathbb R^d}\sum_{j_1,\dots,j_v=1}^d\sup_{y\in R(0,\epsilon\sigma_*\eta)}|\partial_{j_1,\dots, j_v}\rho^{A,\phi,\epsilon,\Sigma} (w+y)|
\leq C\frac{(\log d)^{v/2}}{(\epsilon\sigma_*)^{v}},
\end{equation*}
where $C>0$ is a constant depending only on $v$ and $K$. 
\end{lemma}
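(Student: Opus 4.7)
My plan is to adapt the proof of Lemma \ref{lem: smoothing inequality mixed 1} by introducing an additional Gaussian convolution, which effectively replaces the Lipschitz constant $\phi$ of $m^{A,\phi}$ with an ``effective Lipschitz constant'' of order $\sqrt{\log d}/(\epsilon\sigma_*)$ coming from Gaussian smoothing. The motivation is that plugging $\phi = C\sqrt{\log d}/(\epsilon\sigma_*)$ into Lemma \ref{lem: smoothing inequality mixed 1} immediately produces the right-hand side of Lemma \ref{lemma:fk2.2}; the task is therefore to realize such an effective Lipschitz constant through an auxiliary smoothing step.

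Concretely, I would split the Gaussian noise as $\epsilon Z = \epsilon_1 Z^{(1)} + \epsilon_2 Z^{(2)}$, where $Z^{(1)}, Z^{(2)}$ are independent copies of $Z\sim N(0,\Sigma)$ and $\epsilon_1 = \epsilon_2 = \epsilon/\sqrt{2}$. This yields
\[
\rho^{A,\phi,\epsilon,\Sigma}(w) = \Ep\bigl[\tilde m(w+\epsilon_1 Z^{(1)})\bigr],\qquad \tilde m(u) := \Ep\bigl[m^{A,\phi}(u+\epsilon_2 Z^{(2)})\bigr].
\]
The function $\tilde m$ is a $C^{\infty}$ smoothing of $m^{A,\phi}$, still bounded by $1$, and it inherits the rectangular structure through its support.

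Next I would establish the effective Lipschitz-type bound
\[
\sum_{j=1}^d|\partial_j \tilde m(u)| \leq C\frac{\sqrt{\log d}}{\epsilon_2\sigma_*}\quad\text{uniformly in }u\in\mathbb{R}^d,
\]
which is the core new ingredient and plays the role that $\sum_j|\partial_j m^{A,\phi}|\leq\phi$ plays in the proof of Lemma \ref{lem: smoothing inequality mixed 1}. This would be proved by Gaussian integration by parts against $Z^{(2)}$ to obtain $\partial_j\tilde m(u) = \epsilon_2^{-1}\Ep[m^{A,\phi}(u+\epsilon_2 Z^{(2)})(\Sigma^{-1}Z^{(2)})_j]$, and then exploiting (i) the fact that $0\leq m^{A,\phi}\leq 1$ is supported in the thickened rectangle $A^{1/\phi}$, and (ii) Gaussian max/anti-concentration inequalities for $\max_j|(\Sigma^{-1}Z^{(2)})_j|$, which have magnitude of order $\sqrt{\log d}/\sigma_*$. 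The rectangle support structure is what converts the naive dimension factor $d$ (as would arise from $\sum_j\Ep|(\Sigma^{-1}Z^{(2)})_j|$) into the desired $\sqrt{\log d}$.

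Finally, I would repeat the argument of Lemma \ref{lem: smoothing inequality mixed 1} applied to the representation $\rho^{A,\phi,\epsilon,\Sigma} = \tilde m * q_{\epsilon_1^2\Sigma}$. The proof of Lemma \ref{lem: smoothing inequality mixed 1} only uses the rectangular structure via the bound $\sum_{j}|\partial_j m^{A,\phi}|\leq\phi$ and extends verbatim to any smooth bounded function satisfying a corresponding sum-of-partials bound. With $C\sqrt{\log d}/(\epsilon_2\sigma_*)$ playing the role of $\phi$ and $\epsilon_1$ playing the role of $\epsilon$, this delivers
\[
\sum_{j_1,\dots,j_v}\sup_{y}|\partial_{j_1,\dots,j_v}\rho^{A,\phi,\epsilon,\Sigma}(w+y)| \leq C\cdot\frac{\sqrt{\log d}}{\epsilon_2\sigma_*}\cdot\frac{(\log d)^{(v-1)/2}}{(\epsilon_1\sigma_*)^{v-1}}\asymp \frac{(\log d)^{v/2}}{(\epsilon\sigma_*)^v},
\]
as desired. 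The hardest part will be the effective Lipschitz bound for $\tilde m$, since this is where the dimension-polylogarithmic behavior truly appears; the rest of the argument is structurally identical to Lemma \ref{lem: smoothing inequality mixed 1}.
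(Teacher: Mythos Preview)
Your proposal has two genuine gaps.

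First, the ``effective Lipschitz bound'' $\sum_{j=1}^d|\partial_j\tilde m(u)|\leq C\sqrt{\log d}/(\epsilon_2\sigma_*)$ is not obtained by the argument you sketch. Gaussian integration by parts gives $\partial_j\tilde m(u)=\epsilon_2^{-1}\Ep[m^{A,\phi}(u+\epsilon_2 Z^{(2)})(\Sigma^{-1}Z^{(2)})_j]$, but controlling $\max_j|(\Sigma^{-1}Z^{(2)})_j|$ by $\sqrt{\log d}/\sigma_*$ bounds a \emph{maximum}, not a \emph{sum} of absolute values; the naive sum would produce a factor $d$, not $\sqrt{\log d}$. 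The bound you want is in fact exactly the $v=1$ case of the lemma you are trying to prove, so you need a genuine argument here (e.g., the product-form computation of Bentkus / Fang--Koike for $\Sigma=I_d$), not just a moment bound on $\Sigma^{-1}Z$.

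Second, and more seriously, the claim that ``the proof of Lemma~\ref{lem: smoothing inequality mixed 1} only uses the rectangular structure via the bound $\sum_j|\partial_j m^{A,\phi}|\leq\phi$ and extends verbatim to any smooth bounded function'' is false. Step~2 of that proof uses the explicit identity $\partial_{j_1}\tilde m(w+z)=g'(\max_k(w_k+z_k))\pi_{j_1}(w+z)$ to reduce to integrals over half-spaces and then to the auxiliary function $\Psi$; Step~3 then exploits the specific structure of $\Psi$ and \eqref{eq: pi sum}. If you replace $\tilde m$ by an arbitrary smooth function satisfying only $\sum_j|\partial_j\tilde m|\leq\phi_{\mathrm{eff}}$, the best you can do after one integration by parts is $\phi_{\mathrm{eff}}\int_{\mathbb R^d}\sum_{j_2,\dots,j_v}|\partial_{j_2,\dots,j_v}\varphi(z)|\,dz$, and already for $v=2$ this integral is of order $d$, not $\sqrt{\log d}$.

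The paper's route is far more direct: after the standard reduction to $\epsilon=1$, $\Sigma=I_d$, use the layer-cake identity $m^{A,\phi}=\phi\int_0^{1/\phi}1_{A^s}\,ds$ (Lemma~\ref{m-id}) to write
\[
\partial_{j_1,\dots,j_v}\rho(w+y)=(-1)^v\phi\int_0^{1/\phi}\!\!\int_{A^s-w}\partial_{j_1,\dots,j_v}\varphi(z-y)\,dz\,ds,
\]
and then apply Lemma~2.2 of \cite{FK20} (which is exactly the present lemma in the case $\phi=\infty$) to each rectangle $A^s-w$. The $\phi$-average disappears because the FK20 bound is uniform over rectangles.
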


\begin{lemma}\label{lemma:vanish}
Let $A=\prod_{j=1}^d(a_j,b_j]\in\mathcal R$ and $v\in\mathbb Z$ be such that $v\geq 1$. Then for all $\kappa,\eta>0$ with $\kappa>\eta$,
\begin{equation*}
\sup_{w\in (A^{2\epsilon\kappa+ \phi^{-1}}\setminus A^{-2\epsilon\kappa})^c}\sum_{j_1,\dots,j_v=1}^d\sup_{y\in R(0,\epsilon\sigma_*\eta)}|\partial_{j_1,\dots, j_v}\rho^{A,\phi,\epsilon,\Sigma} (w+y)|
\leq C\frac{d^v}{(\epsilon\sigma_*)^{v}}e^{-(\kappa-\eta)^2/4},
\end{equation*}
where $C>0$ is a constant depending only on $v$. 
\end{lemma}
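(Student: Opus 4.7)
The plan is to represent each $v$-th partial derivative of $\rho^{A,\phi,\epsilon,\Sigma}$ as a Gaussian expectation against a multivariate Hermite polynomial, and then split on whether $w$ lies far outside $A$ or deep inside $A$. First I would transfer all $w$-derivatives onto the Gaussian kernel by integration by parts: for $Z\sim N(0,\Sigma)$ and the multivariate Hermite polynomial $H_{j_1,\dots,j_v}$ defined through $\partial_{j_1,\dots,j_v}p_\Sigma=(-1)^v H_{j_1,\dots,j_v}p_\Sigma$, one has
\[
\partial_{j_1,\dots,j_v}\rho^{A,\phi,\epsilon,\Sigma}(w+y)=\epsilon^{-v}\,\Ep\bigl[m^{A,\phi}(w+y+\epsilon Z)H_{j_1,\dots,j_v}(Z)\bigr].
\]
Since $\Ep[H_{j_1,\dots,j_v}(Z)]=0$ for every $v\ge 1$, this equals $\epsilon^{-v}\Ep[(m^{A,\phi}(w+y+\epsilon Z)-1)H_{j_1,\dots,j_v}(Z)]$. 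A uniform moment bound $\Ep[H_{j_1,\dots,j_v}(Z)^2]\le C_v\sigma_*^{-2v}$ then follows from Isserlis' formula and $\|\Sigma^{-1}\|_{\mathrm{op}}\le\sigma_*^{-2}$, and I shall use both formulas in what follows.

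Next, I split the hypothesis $w\in(A^{2\epsilon\kappa+\phi^{-1}}\setminus A^{-2\epsilon\kappa})^c$ into two subcases. In the \emph{far outside} case $w\notin A^{2\epsilon\kappa+\phi^{-1}}$, there is a distinguished coordinate $j^{\ast}$ (with a sign) such that, for every $y\in R(0,\epsilon\sigma_*\eta)$, the integrand $m^{A,\phi}(w+y+\epsilon Z)$ is nonzero only when $|Z_{j^{\ast}}|>2\kappa-\eta$; here I use that $\sigma_*\le 1$ because $\Sigma$ has unit diagonal. Cauchy--Schwarz together with the one-dimensional Gaussian tail bound yields $|\partial_{j_1,\dots,j_v}\rho^{A,\phi,\epsilon,\Sigma}(w+y)|\le C_v(\epsilon\sigma_*)^{-v}e^{-(2\kappa-\eta)^2/4}$ for each multi-index, and summing over the $d^v$ multi-indices together with $(2\kappa-\eta)^2\ge(\kappa-\eta)^2$ delivers the claimed bound. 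In the \emph{deep inside} case $w\in A^{-2\epsilon\kappa}$, I use the zero-mean form instead: $(m^{A,\phi}-1)(w+y+\epsilon Z)$ vanishes on $A$, and the same coordinate-wise analysis now forces only the weaker constraint $\|Z\|_\infty>2\kappa-\eta$ on the support. Cauchy--Schwarz with the union-bound tail $\Pr(\|Z\|_\infty>r)\le 2d\,e^{-r^2/2}$ then gives each derivative a bound of order $C_v\sqrt d(\epsilon\sigma_*)^{-v}e^{-(2\kappa-\eta)^2/4}$, and summing yields $C_v d^{v+1/2}(\epsilon\sigma_*)^{-v}e^{-(2\kappa-\eta)^2/4}$.

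The main obstacle is the deep-inside case, where $\{w+y+\epsilon Z\notin A\}$ is controlled only through a $d$-term union bound so that Cauchy--Schwarz picks up an extra $\sqrt d$ factor. To absorb this I would combine the preceding estimate with the universal bound $C_v(\log d)^{v/2}(\epsilon\sigma_*)^{-v}$ from Lemma~\ref{lemma:fk2.2} and take the minimum. Whenever $\kappa-\eta\gtrsim\sqrt{\log d}$ the gap between the exponents $(2\kappa-\eta)^2/4$ and $(\kappa-\eta)^2/4$ (which equals $\kappa(3\kappa-2\eta)/4$) absorbs the $\sqrt d$; in the complementary regime $(\log d)^{v/2}\ll d^v e^{-(\kappa-\eta)^2/4}$, so Lemma~\ref{lemma:fk2.2} itself is already tighter than the target. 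This two-regime combination gives the stated bound uniformly in $\kappa>\eta$ with a constant depending only on $v$.
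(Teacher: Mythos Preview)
Your argument is correct and takes a genuinely different route from the paper's. The paper first reduces to the case $\Sigma=I_d$, $\epsilon=1$ by writing $Z=Z^1+\sigma_* Z^2$ with $Z^1\sim N(0,\Sigma-\sigma_*^2I_d)$ and $Z^2\sim N(0,I_d)$ independent; on the event $\|Z^1\|_\infty\le\kappa$ the relevant $w$-region simply shrinks, and on the complement Lemma~\ref{lemma:fk2.2} combined with the tail bound $\Pr(\|Z^1\|_\infty>\kappa)\le 2d\,e^{-\kappa^2/2}$ gives the residual term. Once $\Sigma=I_d$, the product structure $\varphi(z)=\prod_j\varphi_1(z_j)$ lets them compute each partial derivative of $\rho$ as an explicit product of one-dimensional Hermite functions and $\Phi_1$-factors; in the deep-inside case this product yields $\prod_{q=1}^u e^{-(\kappa-\eta)^2/4}$ directly, with no union bound and no extra $\sqrt d$.

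Your approach instead keeps $\Sigma$ general, uses the multivariate Hermite representation, and relies on the uniform second-moment bound $\Ep[H_{j_1,\dots,j_v}(Z)^2]\le C_v\sigma_*^{-2v}$ plus Cauchy--Schwarz. This is more black-box and avoids the coordinate-wise Hermite bookkeeping, but the price is the union-bound $\sqrt d$ in the deep-inside case, which you then have to eliminate by taking the minimum with Lemma~\ref{lemma:fk2.2} and splitting into the regimes $\kappa\gtrless\sqrt{2\log d}$. (A small imprecision: your regime split should be phrased in terms of $\kappa$ rather than $\kappa-\eta$, since what you actually use is $(2\kappa-\eta)^2-(\kappa-\eta)^2=\kappa(3\kappa-2\eta)>\kappa^2\ge 2\log d$ in the large-$\kappa$ regime, and $(\kappa-\eta)^2<\kappa^2<2\log d$ in the small-$\kappa$ regime so that $d^v e^{-(\kappa-\eta)^2/4}>d^{v-1/2}\gtrsim_v(\log d)^{v/2}$.) Both approaches ultimately invoke Lemma~\ref{lemma:fk2.2} somewhere; the paper uses it in the reduction step, you use it in the deep-inside regime combination.
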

\begin{remark}[Relation to Previous Work]
All three lemmas here are new. Their proofs are inspired by the original ideas of \cite{B90}, who derived Lemma \ref{lemma:fk2.2} without smoothing (with $\phi=\infty$) and $\eta=0$. See also \cite{FK20} who extended the result of \cite{B90} to allow for $\eta>0$ in Lemma \ref{lemma:fk2.2} using related methods of \cite{AHT98}. Having $\eta \neq 0$ is important for establishing the optimal dependence on the envelopes.
\qed
\end{remark}

\section{Proofs for Section \ref{sec: main results}}\label{sec: main proofs}
\begin{proof}[Proof of Theorem \ref{t1}]
For all $i=1,\dots,n$, we denote $\xi_i := X_i/\sqrt n$, so that $W=\sum_{i=1}^n \xi_i$ and $\|\xi_i\|_{\infty}\leq\delta$. 
Working with $\xi_i$'s is a little more convenient than working with $X_i$'s.
Also, we assume, without loss of generality, that $W$ and $Z$ are independent.  In addition, since $\varrho_{\Sigma}\leq1$, we assume, again without loss of generality, that
\ben{\label{wlog}
 \frac{\Delta_1}{\log d} + \frac{\delta^2\log d}{\sigma_*^2} \leq \frac{1}{3}.
}
Further, we write $\varrho'=\varrho_\Sigma$ for brevity. 

Now, for any bounded measurable function $h:\mathbb{R}^d\to\mathbb{R}$ and $t\in[0,1]$, define $T_t h:\mathbb{R}^d\to\mathbb{R}$ by
\be{
T_t h(w)=\Ep h (\sqrt{1-t} w+\sqrt{t}Z)-\Ep h(Z),\qquad w\in\mathbb{R}^d.
}
Also, note that we have $\Pr(V\in A)=\Pr(V^\diamond\leq r)$ for any $A=\prod_{j=1}^d(a_j,b_j]\in\mathcal R$ and random vector $V$ in $\mathbb R^d$, where $V^\diamond=(V^T,-V^T)^T$ and $r=(b_1,\dots,b_d,-a_1,\dots,-a_d)^T$. 
Thus, by Lemmas \ref{lem: rao} and \ref{lem: anticoncentration} and the fact that $\Pr(\|Z\|_{\infty}>\sqrt{4\log d})\leq 1/2$, we have
\begin{equation}\label{3}
\varrho'\lesssim 
\sup_{A\in \mathcal{R}}|\Ep T_t 1_A (W)|
+\sqrt{\frac{t}{1-t}} \log d.
\end{equation}
By taking the value of $t$ appropriately, we will deduce a recursive inequality for $\varrho'$; see \eqref{somewhere} below. 
In particular, we set
\ben{\label{t}
t:= \frac{\Delta_1}{\log d}+\frac{\delta^2\log d}{\sigma_*^2}.
}
Note here that because of \eqref{wlog}, $|\log t|\geq 1$ and $1/\sqrt{1-t}\leq 2$. 

Further, fix $\phi>0$, to be chosen below in \eqref{t0}, and for any $A\in\mathcal R$, consider the smoothed indicator function $m^{A,\phi}\colon \mathbb R^d\to\mathbb R$ as in \eqref{eq: m function} of Section \ref{sec: smoothing inequalities}. Denoting $\tilde W = \sqrt{1-t} W + \sqrt{t}Z$, we have by Lemma \ref{lem: anticoncentration} that
\begin{align*}
\Pr (\tilde W\in A)&\leq \Ep m^{A,\phi}(\tilde W)= \Ep m^{A,\phi}( Z) + \Ep m^{A,\phi}(\tilde W) - \Ep m^{A,\phi}(Z)\\
&\leq \Pr (Z\in A^{1/\phi}) + \Ep m^{A,\phi}(\tilde W) - \Ep m^{A,\phi}(Z)\\
&\leq \Pr (Z\in A) + C\sqrt{\log d}/\phi + \Ep m^{A,\phi}(\tilde W) - \Ep m^{A,\phi}(Z)
\end{align*}
and, similarly,
\begin{align*}
&\Pr (Z\in A)\leq \Pr (Z \in A^{-1/\phi}) + C\sqrt{\log d}/\phi\leq \Ep  m^{A^{-1/\phi},\phi}(Z) + C\sqrt{\log d}/\phi\\
&\quad = \Ep  m^{A^{-1/\phi},\phi}(\tilde W) + \Ep  m^{A^{-1/\phi},\phi}(Z) - \Ep  m^{A^{-1/\phi},\phi}(\tilde W ) + C\sqrt{\log d/}\phi\\
&\quad\leq \Pr  (\tilde W\in A) + \Ep  m^{A^{-1/\phi},\phi}(Z) - \Ep  m^{A^{-1/\phi},\phi}(\tilde W) + C\sqrt{\log d/}\phi,
\end{align*}
where $C>0$ is a universal constant. 
Hence,
$$
\sup_{A\in\mathcal R}\left|\Pr (\tilde W\in A) - \Pr (Z\in A)\right| \lesssim \sup_{A\in\mathcal R}\left|\Ep  m^{A,\phi}(\tilde W) - \Ep  m^{A,\phi}(Z)\right| + \sqrt{\log d}/\phi,
$$
and so,
\begin{equation}\label{eq: some reduction}
\sup_{A\in\mathcal R}|\Ep  T_t1_A(W)|\lesssim \sup_{A\in\mathcal R}|\Ep  T_tm^{A,\phi}(W)|+\sqrt{\log d}/\phi.
\end{equation}
Given \eqref{3} and \eqref{eq: some reduction}, we need to bound $\sup_{A\in\mathcal R}|\Ep  T_tm^{A,\phi}(W)|$.

To do so, fix any $A\in\mathcal{R}$ (we will take the supremum in \eqref{somewhere}), write $h:=m^{A,\phi}$, and proceed to bound $|\Ep T_t h(W)|$. By the fundamental theorem of calculus and the fact that $\Ep T_1h(W)=0$,
$$
\Ep T_th(W)=-\frac{1}{2}\int_t^1\Ep\left\langle \nabla h(\sqrt{1-s}W+\sqrt s Z),\frac{Z}{\sqrt s} - \frac{W}{\sqrt{1-s}} \right\rangle ds,
$$
and so, using Lemma \ref{lem: stein version},
\begin{equation}\label{eq: annoying lemma gone}
\Ep T_th(W) = -\frac{1}{2}\int_t^1\Ep\left[ \langle\Sigma,\nabla^2 h_s(\sqrt{1-s}W)\rangle - \left\langle\frac{W}{\sqrt{1-s}},\nabla h_s(\sqrt{1-s}W)\right\rangle\right]ds,
\end{equation}
where for all $s\in[t,1]$, the funciton $h_s\colon \mathbb R^d\to\mathbb R$ is given by 
$$
h_s(w)=\Ep h(w+\sqrt s Z),\quad w\in\mathbb R^d.
$$ 
Here, it is useful to note that $h_s = \rho^{A,\phi,\sqrt s,\Sigma}$, where $\rho^{A,\phi,\epsilon,\Sigma}$ with $\epsilon = \sqrt s$ is the function appearing in \eqref{eq: rho function} of Section \ref{sec: smoothing inequalities}. In particular, $h_s$ is infinitely differentiable, with derivatives satisfying bounds in Lemmas \ref{lem: smoothing inequality mixed 1}, \ref{lemma:fk2.2}, and \ref{lemma:vanish}. These bounds will be used below.

To bound the integral in \eqref{eq: annoying lemma gone}, we employ the exchangeable pair approach in Stein's method for multivariate normal approximation by \cite{ChMe08} and \cite{ReRo09} along with a symmetry argument by \cite{FK20,FaKo20b} (cf.~\eqref{n08}--\eqref{n09} below). 
Define $\xi = (\xi_i)_{i=1}^n$ and let $\xi' = (\xi'_i)_{i=1}^n$ be an independent copy of $\xi$. Also, let $I$ be a random index uniformly chosen from $\{1,\dots, n\}$ and independent of $\xi$ and $\xi'$.
In addition, define $Y_i:=\xi_i'-\xi_i$ and $W':=W+Y_I$. It is then easy to verify that $(W, W')$ has the same distribution as $(W', W)$ (exchangeability) and 
\ben{\label{n02}
\Ep[W'-W\mid W']=\frac{W'}{n},\quad \Ep[W'-W\mid W] = -\frac{W}{n}.
} 
Therefore, denoting $D:=W'-W$, we have
\begin{align}
\Ep\left\langle \frac{W}{\sqrt{1-s}}, \nabla h_s(\sqrt{1-s} W)]  \right\rangle \nonumber
& = \Ep \left\langle \frac{W'}{\sqrt{1-s}}, \nabla h_s(\sqrt{1-s} W')\right\rangle\nonumber\\
& = \Ep \left\langle \frac{nD}{\sqrt{1-s}}, \nabla h_s(\sqrt{1-s} W')\right\rangle.\label{eq: exch initial}
\end{align}
Express the right-hand side of this chain of identities, using Taylor's expansion around $W$ with exact integral remainder,  as:
\begin{multline}
\Ep \left\langle \frac{nD}{\sqrt{1-s}}, \nabla  h_s(\sqrt{1-s} W)\right\rangle + \Ep \langle  nDD^{T}, \Hess h_s(\sqrt{1-s} W) \rangle\\
 + n\sum_{j,k,l=1}^d \sqrt{1-s} \Ep\Big[U D_jD_kD_l\partial_{jkl} h_s\Big(\sqrt{1-s}(W+ (1-U)D)\Big)\Big],\label{eq: exch final}
\end{multline}
where $U$ is a uniform random variable on $[0,1]$ independent of everything else, and note also that by \eqref{n02},
\begin{equation}\label{eq: exch ura}
\Ep\left\langle \frac{nD}{\sqrt{1-s}}, \nabla h_s(\sqrt{1-s} W )  \right\rangle = - \Ep\left\langle \frac{W}{\sqrt{1-s}}, \nabla h_s(\sqrt{1-s} W)  \right\rangle.
\end{equation}
Therefore, substituting \eqref{eq: exch final} and \eqref{eq: exch ura} into \eqref{eq: exch initial} and rearranging terms, we obtain
\begin{align*}
&\Ep\left\langle \frac{W}{\sqrt{1-s}}, \nabla h_s(\sqrt{1-s} W)  \right\rangle
=\frac{n}{2}\Ep \langle  DD^{T}, \Hess h_s(\sqrt{1-s} W) \rangle \\
&\qquad\qquad + \frac{n}{2}\sum_{j,k,l=1}^d \sqrt{1-s} \Ep\Big[ U D_jD_kD_l\partial_{jkl} h_s\Big(\sqrt{1-s}(W +  (1-U)D)\Big)\Big],
\end{align*}
and so, by \eqref{eq: annoying lemma gone},
\begin{equation}\label{eq: intermediate step nice}
\Ep T_th(W) = \Ep T_t m^{A,\phi}(W)  = -\frac{1}{2} \int_{t}^1 (R_1(s) - R_2(s)) ds ,
\end{equation}
where
\ba{
R_1(s) &:=\sum_{j,k=1}^d \Ep\left[\Ep\left[\Sigma_{jk}-\frac{n}{2}D_{j}D_{k}\mid\xi\right]\partial_{jk} h_s(\sqrt{1-s}W )\right],\\
R_2(s) &:=\frac{n}{2}\sum_{j,k,l=1}^d \sqrt{1-s}  \Ep \Big[U D_jD_kD_l \partial_{jkl} h_s\Big(\sqrt{1-s}(W  +  (1-U)D)\Big)\Big].
}
We bound $R_1(s)$ and $R_2(s)$ in turn. Regarding $R_1(s)$, we have by Lemma \ref{lemma:fk2.2} that
\ba{
\sup_{w\in\mathbb{R}^d}\sum_{j,k=1}^d\left|\partial_{jk}h_s(w)\right|
\lesssim \frac{\log d}{\sigma^2_* s}.
}
Hence,
\ba{
\int_{t}^1 |R_1(s)| ds \lesssim \frac{(\log d)|\log t|}{\sigma_*^2}\Ep \left \| \Sigma -  \frac{n}{2} \Ep[DD^T\mid \xi] \right \|_\infty.
}
Here, recalling that $D = W'-W = Y_I$, one can deduce 
\ba{
\Sigma -  \frac{n}{2} \Ep[DD^T\mid \xi] =
(\Sigma-\Sigma_W)-\frac{1}{2}\sum_{i=1}^n(\xi_{i}\xi_{i}^T-\Ep[\xi_{i}\xi_{i}^T]),
}
and so
$$
\Ep \left \| \Sigma -  \frac{n}{2} \Ep[DD^T\mid \xi] \right \|_\infty
 \leq\|\Sigma-\Sigma_W\|_\infty+\frac{1}{2}  \Ep \left \| \sum_{i=1}^n(\xi_{i}\xi_{i}^T-\Ep[\xi_{i}\xi_{i}^T]) \right \|_\infty.
 $$
Also, by Lemma \ref{lem: maximal}, the second term is bounded, up to an absolute
constant, by 
\begin{multline}
\max_{1\leq j,k\leq d}\sqrt{\sum_{i=1}^n\Ep[\xi_{ij}^2\xi_{ik}^2]}\sqrt{\log d}
+\sqrt{\Ep\left[\max_{1\leq i\leq n}\max_{1\leq j,k\leq d}\xi_{ij}^2\xi_{ik}^2\right]}\log d\\
\lesssim \max_{1\leq j\leq d}\sqrt{\sum_{i=1}^n\Ep[\xi_{ij}^4]}\sqrt{\log d}
+\delta^2\log d.\label{eq: maximal r1}
\end{multline}
Consequently, we obtain
\ben{\label{r1}
 \frac{1}{2} \int_{t}^1 |R_1(s) |ds \lesssim |\log t|\times \left(
\Delta_0
+\sqrt{\Delta_1\log d}
+\frac{(\delta\log d)^2}{\sigma_*^2}
\right).
}
Next, we bound $R_2(s)$. We have
\begin{align}
&\Ep\Big[UD_jD_kD_l   \partial_{jkl}h_s\Big(\sqrt{1-s}(W+ (1-U) D)\Big)\Big]\nonumber\\
&\qquad =-\Ep\Big[U D_jD_kD_l \partial_{jkl}h_s\Big(\sqrt{1-s}(W'- (1-U)D)\Big)\Big]\nonumber\\
&\qquad =-\Ep\Big[U D_jD_kD_l \partial_{jkl}h_s\Big(\sqrt{1-s}(W+UD)\Big)\Big],\label{n08}
\end{align}
where the first equality holds by exchangeability and the second by $W'=W+D$. Hence, using Taylor's expansion one more time, we obtain
\begin{align}
\frac{|R_2(s)|}{\sqrt{1-s}} &= \bigg|\frac{n}{4}\sum_{j,k,l=1}^d\Ep\Big[U D_jD_kD_l \partial_{jkl}h_s\Big(\sqrt{1-s}(W+(1-U)D)\Big)\Big]\nonumber\\
&\qquad - \frac{n}{4}\sum_{j,k,l=1}^d\Ep\Big[UD_jD_kD_l\partial_{jkl}h_s\Big(\sqrt{1-s}(W+UD)\Big)\Big]\bigg|\nonumber\\
&\leq\frac{n\sqrt{1-s}}{4} \sum_{j,k,l,m=1}^d\Ep\Big[\Big|D_jD_kD_lD_m \partial_{jklm}h_s\Big(\sqrt{1-s}(W+UD+U'(1-2U)D)\Big)\Big|\Big]\nonumber\\
&=\frac{\sqrt{1-s}}{4} \sum_{i=1}^n\sum_{j,k,l,m=1}^d\Ep\Big[\Big|Y_{ij}Y_{ik}Y_{il}Y_{im}\partial_{jklm}h_s\Big(\sqrt{1-s}(W^{(i)}+\tilde \xi_i)\Big)\Big|\Big],
\label{n09}
\end{align}
where $U'$ is a uniform random variable on $[0,1]$ independent of everything else, $W^{(i)}:=W-\xi_i$, and $\tilde \xi_i:=\xi_i+UY_i+U'(1-2U)Y_i$. Here, note that $|U+U'(1-2U)|\leq U\vee(1-U)\leq1$ and thus $\|\tilde \xi_i\|_\infty\leq\|\xi_i\|_\infty+\|Y_i\|_{\infty}\leq3\delta$. 
Therefore, given that $\|\xi_i\|_\infty\leq\delta$ and that $Y_i$ is independent of $W^{(i)}$, we have
that $4 |R_2(s)|/(1-s)$ is bounded by:
\ba{
&   \sum_{i=1}^n\sum_{j,k,l,m=1}^d \Ep\left[|Y_{ij}Y_{ik}Y_{il}Y_{im}|\sup_{\|y\|_\infty\leq 3\delta}\Big|\partial_{jklm}h_s\Big(\sqrt{1-s}(W^{(i)}+y)\Big)\Big|\right]\\
&\qquad =  \sum_{i=1}^n\sum_{j,k,l,m=1}^d \Ep[|Y_{ij}Y_{ik}Y_{il}Y_{im}|]\Ep\left[\sup_{\|y\|_\infty\leq3\delta}\Big|\partial_{jklm}h_s\Big(\sqrt{1-s}(W^{(i)}+y)\Big)\Big|\right]\\
&\qquad \leq   \sum_{i=1}^n\sum_{j,k,l,m=1}^d \Ep[|Y_{ij}Y_{ik}Y_{il}Y_{im}|]\Ep\left[\sup_{\|y\|_\infty\leq4\delta}\Big|\partial_{jklm}h_s\Big(\sqrt{1-s}(W+y)\Big)\Big|\right]\\
&\qquad =  \sum_{j,k,l,m=1}^d \left(\sum_{i=1}^n\Ep[|Y_{ij}Y_{ik}Y_{il}Y_{im}|]\right)\Ep\left[\sup_{\|y\|_\infty\leq4\delta}\Big|\partial_{jklm}h_s\Big(\sqrt{1-s}(W+y)\Big)\Big|\right]\\
&\qquad \lesssim  \max_{1\leq j\leq d}\sum_{i=1}^n\Ep\xi_{ij}^4 \Ep\left[\sum_{j,k,l,m=1}^d\sup_{\|y\|_\infty\leq4\delta}\Big|\partial_{jklm}h_s\Big(\sqrt{1-s}(W+y)\Big)\Big|\right].
}
Further, let
\begin{equation}\label{eq: kappa definition}
\eta:= 4/\sqrt{\log d},\quad \kappa:=\sqrt{16\log d-2\log (1-\sqrt{1-t})}+\eta,
\end{equation}
so that for any $t \in (0,1)$,
\begin{equation}\label{kappa bound}
e^{-(\kappa-\eta)^2/4}=\frac{\sqrt{1-\sqrt{1-t}}}{d^4}\leq\frac{\sqrt{t}}{d^4}.
\end{equation}
Also, for any $s\in[t,1]$,
\begin{equation}\label{eq: eta bound}
4\delta\sqrt{1-s}
\leq4\delta
\leq\eta\sigma_*\sqrt{s},
\end{equation}
where the second inequality follows from \eqref{t}. Then using \eqref{eq: eta bound} and denoting
\begin{equation}\label{eq: epsilon s}
\eps_s:=2\sqrt{s}\kappa + 1/\phi,
\end{equation}
we have
\begin{align}
&\sum_{j,k,l,m=1}^d\sup_{\|y\|_\infty\leq4\delta}\Big|\partial_{jklm}h_s\Big(\sqrt{1-s}(W+y)\Big)\Big|\nonumber\\
&\qquad \qquad\lesssim \frac{\phi(\log d)^{3/2}1_{\{\sqrt{1-s}W\in (\partial A)^{\eps_s}\}}}{(\sigma_*\sqrt{s})^3}+\frac{\sqrt t}{d^4}\frac{d^4}{(\sigma_*\sqrt{s})^4},\label{eq: smooth appl}
\end{align}
where the first term on the right-hand side appears from bounding the left-hand side by Lemma \ref{lem: smoothing inequality mixed 1} and the second term appears from bounding the left-hand side by Lemma \ref{lemma:vanish} and using \eqref{kappa bound} (here, for any $x>0$, we write $(\partial A)^x :=  A^{x}\setminus A^{-x}$). Hence,
\begin{equation}
 |R_2(s)| \lesssim (1-s)\frac{\Delta_1\sigma_*^4}{(\log d)^2}  \Ep \bigg[\frac{\phi(\log d)^{3/2}1_{\{\sqrt{1-s}W\in (\partial A)^{\eps_s}\}}}{(\sigma_*\sqrt{s})^3} +\frac{\sqrt{t}}{(\sigma_*\sqrt{s})^4} \bigg].\label{r2-1}
\end{equation}
Next,
\begin{align}
\Pr(\sqrt{1-s}W\in(\partial A)^{\varepsilon_s}) &\leq \Pr(\sqrt{1-s}Z\in(\partial A)^{\varepsilon_s}) + 2\varrho'\lesssim \frac{\eps_s\sqrt{\log d}}{\sqrt{1-s}}+\varrho'\label{eq: ququ}
\end{align}
by the definition of $\varrho'$ and Lemma \ref{lem: anticoncentration}, using that $\Sigma$
has unit diagonal entries.

Inserting this bound into \eqref{r2-1}, we deduce 
$$
|R_2(s) |\lesssim \frac{\Delta_1\sigma_*^4}{(\log d)^2}\left(\frac{\phi(\log d)^{3/2}}{(\sigma_*\sqrt s)^3}(\eps_s\sqrt{\log d} +\varrho') + \frac{\sqrt t}{(\sigma_*\sqrt s)^4}\right).
$$
Thus, using \eqref{eq: epsilon s}, we have
\begin{align}
\int_t^1 |R_{2}(s)| ds & \lesssim \frac{\Delta_1\sigma_*^4}{(\log d)^2} \left(\frac{\kappa\phi(\log d)^{2}|\log t|}{\sigma_*^3} + \frac{(\log d)^2}{\sigma_*^3\sqrt t} +\frac{\varrho'\phi(\log d)^{3/2}}{\sigma_*^3\sqrt t} + \frac{1}{\sigma_*^4\sqrt t}\right)\nonumber\\
& \lesssim \Delta_1\left(\kappa\phi|\log t| + \frac{1}{\sqrt t} +\frac{\varrho'\phi}{\sqrt{t\log d}}\right)\label{eq: r2 bound final}
\end{align}
since $\sigma_*\leq 1$ (recall that all diagonal entries of $\Sigma$ are equal to one).
From \eqref{3}, \eqref{eq: some reduction}, \eqref{eq: intermediate step nice}, 
\eqref{r1} and \eqref{eq: r2 bound final}, we obtain
\bmn{\label{somewhere}
\varrho'
\leq c\left[
|\log t|\left(
\Delta_0
+\sqrt{\Delta_1\log d}
+\frac{(\delta\log d)^2}{\sigma_*^2}
\right)\right.\\
+\left.\Delta_1\left(\kappa\phi|\log t| + \frac{1}{\sqrt t} +\frac{\varrho'\phi}{\sqrt{t\log d}}\right)
+ \sqrt{t} \log d + \frac{\sqrt{\log d}}{\phi}
\right], 
}
where $c>0$ is a universal constant (recall that $1/\sqrt{1-t} \leq 2$). Now we are ready to specify the value of $\phi$:
\begin{equation}\label{t0}
\phi := \frac{1}{2c\sqrt{\Delta_1|\log t|}}.
\end{equation}
Combining this choice with the choice of $t$ in \eqref{t} yields
\begin{equation}\label{eq: rho half}
c\Delta_1 \frac{\varrho'\phi}{\sqrt{t\log d}}\leq \frac{\varrho'}{2}.
\end{equation}
Also, since $t\leq 1/3$ by \eqref{wlog}, $\kappa$ defined in \eqref{eq: kappa definition} satisfies
\begin{equation}\label{eq: kappa final}
\kappa \lesssim \sqrt{\log d} + \sqrt{|\log t|}.
\end{equation}
The asserted claim now follows by substituting \eqref{t}, \eqref{t0}, \eqref{eq: rho half}, and \eqref{eq: kappa final} into \eqref{somewhere}.
\end{proof}

\begin{proof}[Proof of Theorem \ref{thm: general unbounded}]
The proof is a modification of the proof of Theorem \ref{t1}. Here, we describe the changes, keeping all unmentioned notations the same as those in the proof of Theorem \ref{t1}. In a nutshell, we only need to change the values of $t$ and $\phi$ and use a truncation argument in the bound for $R_2(s)$. 

First, we now set
\begin{equation}\label{eq: t new}
t:=\frac{\Delta_1}{\log d} + \frac{M(\psi)\log d}{n\sigma_*^4} + \frac{\psi^2\log d}{n\sigma_*^2}
\end{equation}
instead of using $t$ in \eqref{t}. As in the proof of Theorem \ref{t1}, it is without loss of generality to assume here that $t\leq 1/3$; compare with \eqref{wlog} and \eqref{t}. Moreover, since all diagonal entries of $\Sigma$ are equal to one, it follows that $\sigma_*\leq 1$. 
In addition, for a while, we assume $n^{-1}\sum_{i=1}^n \Ep X_{ij}^2\geq 1/2$ for all $j=1,\dots,d$. This implies via Jensen's inequality that $\Delta_1/\log d\geq 1/(4n)$, and so $|\log t|\lesssim \log n$. 

Next, note that \eqref{3}, \eqref{eq: some reduction}, and \eqref{eq: intermediate step nice} hold under our current assumptions by the same arguments as those in the proof of Theorem \ref{t1}. Thus, we only need to bound
$$
\int_t^1|R_1(s)|ds\quad\text{and}\quad\int_t^1|R_2(s)|ds.
$$
Regarding the former, we proceed as in the proof of Theorem \ref{t1} but we change the second line in \eqref{eq: maximal r1} by
$$
\max_{1\leq j\leq d}\sqrt{\sum_{i=1}^n\Ep[\xi_{ij}^4]}\sqrt{\log d}
+\frac{\mathcal M^2\log d}{n}
$$
so that similarly to \eqref{r1}, we obtain
\ben{\label{r1 new}
 \frac{1}{2} \int_{t}^1 |R_1(s) |ds \lesssim |\log t|\times \left(
\Delta_0
+\sqrt{\Delta_1\log d}
+\frac{(\mathcal M\log d)^2}{n\sigma_*^2}
\right).
}

Further, by \eqref{n09}, we have
$$
R_2(s)\lesssim (1-s)\sum_{i=1}^n\sum_{j,k,l,m=1}^d\Ep\Big[\Big|Y_{ij}Y_{ik}Y_{il}Y_{im}\partial_{jklm}h_s\Big(\sqrt{1-s}(W^{(i)}+\tilde \xi_i)\Big)\Big|\Big] \leq \mathcal I_{1}(s)+\mathcal I_{2}(s),
$$
where
$$
\mathcal I_{1}(s):=(1-s) \sum_{i=1}^n\sum_{j,k,l,m=1}^d\Ep\Big[\Big|\varsigma_iY_{ij}Y_{ik}Y_{il}Y_{im}\partial_{jklm}h_s\Big(\sqrt{1-s}(W^{(i)}+\tilde \xi_i)\Big)\Big|\Big],
$$
$$
\mathcal I_{2}(s):= (1-s)\sum_{i=1}^n\sum_{j,k,l,m=1}^d\Ep\Big[\Big|(1-\varsigma_i)Y_{ij}Y_{ik}Y_{il}Y_{im}\partial_{jklm}h_s\Big(\sqrt{1-s}(W^{(i)}+\tilde \xi_i)\Big)\Big|\Big],
$$
and
$\varsigma_i := 1\{\|\xi_i\|_{\infty}\vee\|\xi_i'\|_{\infty} \leq 2\psi/\sqrt n\}$ for all $i =1,\dots,n$. 
We first focus on $\mathcal I_1(s)$. 
Given that $\|\tilde \xi_i\|_\infty\leq\|\xi_i\|_\infty+\|Y_i\|_{\infty}\leq2\|\xi_i\|_\infty+\|\xi_i'\|_\infty$ and that $Y_i$ is independent of $W^{(i)}$, we have
that $|\mathcal I_1(s)|$ is bounded by:
\ba{
&   (1-s)\sum_{i=1}^n\sum_{j,k,l,m=1}^d \Ep\left[\varsigma_i|Y_{ij}Y_{ik}Y_{il}Y_{im}|\sup_{\|y\|_\infty\leq 6\psi/\sqrt n}\Big|\partial_{jklm}h_s\Big(\sqrt{1-s}(W^{(i)}+y)\Big)\Big|\right]\\
&\qquad =  (1-s)\sum_{i=1}^n\sum_{j,k,l,m=1}^d \Ep[\varsigma_i|Y_{ij}Y_{ik}Y_{il}Y_{im}|]\Ep\left[\sup_{\|y\|_\infty\leq6\psi/\sqrt n}\Big|\partial_{jklm}h_s\Big(\sqrt{1-s}(W^{(i)}+y)\Big)\Big|\right]\\
&\qquad \leq  \mathcal{I}_{11}(s)+\mathcal{I}_{12}(s),
}
where
\ba{
\mathcal{I}_{11}(s)&:=(1-s)\sum_{i=1}^n\sum_{j,k,l,m=1}^d \Ep[|Y_{ij}Y_{ik}Y_{il}Y_{im}|]\Ep\left[\varsigma_i\sup_{\|y\|_\infty\leq6\psi/\sqrt n}\Big|\partial_{jklm}h_s\Big(\sqrt{1-s}(W^{(i)}+y)\Big)\Big|\right],\\
\mathcal{I}_{12}(s)&:=(1-s)\sum_{i=1}^n\sum_{j,k,l,m=1}^d \Ep[| Y_{ij}Y_{ik}Y_{il}Y_{im}|]\Ep\left[(1-\varsigma_i)\sup_{\|y\|_\infty\leq6\psi/\sqrt n}\Big|\partial_{jklm}h_s\Big(\sqrt{1-s}(W^{(i)}+y)\Big)\Big|\right].
}
At this step, $\int_t^1\mathcal I_{11}(s)ds$ is bounded in the same way as $\int_t^1|R_2(s)|ds$ in the proof of Theorem \ref{t1}; namely,
$$
\int_t^1\mathcal I_{11}(s)ds\lesssim \Delta_1\left(\kappa\phi|\log t|+\frac{1}{\sqrt t} + \frac{\varrho'\phi}{\sqrt t\log d}\right);
$$
compare with \eqref{eq: r2 bound final}. 
Meanwhile, using the independence between $W^{(i)}$ and $Y_i$, we obtain
\ba{
\mathcal{I}_{12}(s)&\leq(1-s)\sum_{i=1}^n\sum_{j,k,l,m=1}^d\Ep\|Y_{i}\|_\infty^4\Ep[1-\varsigma_i] \Ep\left[\sup_{\|y\|_\infty\leq6\psi/\sqrt n}\Big|\partial_{jklm}h_s\Big(\sqrt{1-s}(W^{(i)}+y)\Big)\Big|\right]\\
&\lesssim(1-s)\sum_{i=1}^n\Ep(1-\varsigma_i)(\|\xi_{i}\|_\infty\vee\|\xi_i'\|_\infty)^4\Ep\left[\sum_{j,k,l,m=1}^d\sup_{\|y\|_\infty\leq6\psi/\sqrt n}\Big|\partial_{jklm}h_s\Big(\sqrt{1-s}(W^{(i)}+y)\Big)\Big|\right],
}
where the last inequality follows from Chebyshev's association inequality; see Theorem 2.14 in \cite{BLM13}. Then, applying \eqref{eq: smooth appl} with replacing $W$ and $4\delta$ by $W^{(i)}$ and $6\psi/\sqrt n$ respectively, we deduce
\ba{
\mathcal{I}_{12}(s)
&\lesssim(1-s)\sum_{i=1}^n\Ep(1-\varsigma_i)(\|\xi_{i}\|_\infty\vee\|\xi_i'\|_\infty)^4 \Ep\left[\frac{\phi(\log d)^{3/2}1_{\{\sqrt{1-s}W^{(i)}\in (\partial A)^{\eps_s}\}}}{(\sigma_*\sqrt{s})^3} +\frac{\sqrt{t}}{(\sigma_*\sqrt{s})^4} \right].
}
Now, as in the proof of \eqref{eq: r2 bound final}, we obtain
\ba{
\mathcal{I}_{12}(s) \lesssim\sum_{i=1}^n\Ep(1-\varsigma_i)(\|\xi_{i}\|_\infty\vee\|\xi_i'\|_\infty)^4 \left(\frac{\phi(\log d)^{3/2}}{(\sigma_*\sqrt s)^3}(\eps_s\sqrt{\log d} +\varrho^{\{i\}}) + \frac{\sqrt t}{(\sigma_*\sqrt s)^4}\right),
}
where, for any subset $\mathcal I\subset\{1,\dots,n\}$, 
$$
\varrho^{\mathcal I}:=\sup_{A\in\mathcal R}\left|\Pr\left(\frac{1}{\sqrt n}\sum_{i\in\mathcal I^c}X_i\in A\right) - \Pr(Z\in A)\right|,\quad Z\sim N(0,\Sigma),\quad i=1,\dots,n.
$$
In addition, for all $i=1,\dots,n$,
\begin{align}
\Ep(1-\varsigma_i)(\|\xi_{i}\|_\infty\vee\|\xi_i'\|_\infty)^4
& \leq \Ep(1-\varsigma_i)(\|\xi_i\|_{\infty}^4 + \|\xi_i'\|_{\infty}^4)
\nonumber\\
&\lesssim \Ep(1\{\|\xi_i\|_{\infty}>\psi/\sqrt n\} + 1\{\|\xi_i'\|_{\infty}>\psi/\sqrt n\})(\|\xi_i\|_{\infty}^4 + \|\xi_i'\|_{\infty}^4)
\nonumber\\
&\lesssim \Ep1\{\|\xi_i\|_{\infty}>\psi/\sqrt n\}\|\xi_i\|_{\infty}^4
\leq M(\psi)/n^2,\label{y-tail-est}
\end{align}
where the penultimate inequality holds by Chebyshev's association inequality. Thus,
\ba{
\mathcal{I}_{12}(s)&\lesssim \frac{M(\psi)}{n} \left(\frac{\phi(\log d)^{3/2}}{(\sigma_*\sqrt s)^3}((\sqrt s\kappa+1/\phi)\sqrt{\log d} +\overline\varrho) + \frac{\sqrt t}{(\sigma_*\sqrt s)^4}\right)\\
&\lesssim \frac{M(\psi)}{n} \left(\frac{\kappa\phi(\log d)^2}{\sigma_*^3s} + \frac{(\log d)^2}{\sigma_*^3s^{3/2}} + \frac{\overline\varrho\phi(\log d)^{3/2}}{\sigma_*^3s^{3/2}}+ \frac{\sqrt t}{\sigma_*^4  s^2}\right),
}
where $\overline\varrho := n^{-1}\sum_{i=1}^n\varrho^{\{i\}}$. 
Therefore, we conclude
$$
\int_t^1\mathcal I_{12}(s)ds \lesssim \frac{M(\psi)(\log d)^2}{n\sigma_*^4}\left(\kappa\phi|\log t|+\frac{1}{\sqrt t} + \frac{\overline\varrho\phi}{\sqrt{t\log d}}\right).
$$
Turning to $\mathcal I_2(s)$, we have by the law of iterated expectations,
$$
\mathcal I_2(s) \lesssim (1-s)\sum_{i=1}^n\Ep\left[ (1-\varsigma_i)\|\xi_i-\xi_i'\|_{\infty}^4\sum_{j,k,l,m=1}^d\Ep\Big[ \Big| \partial_{jklm}h_s\Big(\sqrt{1-s}(W^{(i)}+\tilde\xi_i)\Big) \Big| \mid \tilde\xi_i \Big] \right].
$$
Here, we bound the internal sum as in \eqref{eq: kappa definition} with $\eta=0$, \eqref{eq: epsilon s}, \eqref{eq: smooth appl}, and  \eqref{eq: ququ}; namely, by Lemmas \ref{lem: smoothing inequality mixed 1} and \ref{lemma:vanish},
\begin{align*}
&(1-s)\sum_{j,k,l,m=1}^d\Ep \Big[ \Big| \partial_{jklm}h_s\Big(\sqrt{1-s}(W^{(i)}+\tilde\xi_i)\Big) \Big| \mid \tilde\xi_i \Big]\\
&\qquad \qquad \lesssim \frac{\phi(\log d)^{3/2}}{(\sigma_*\sqrt s)^3}\Big((\sqrt s\kappa + 1/\phi)\sqrt{\log d}+\varrho^{(i)}\Big) + \frac{\sqrt t}{(\sigma_*\sqrt s)^4}\\
&\qquad\qquad\lesssim \frac{\kappa\phi(\log d)^2}{\sigma_*^3s} + \frac{(\log d)^2}{\sigma_*^3s^{3/2}} + \frac{\varrho^{(i)}\phi(\log d)^{3/2}}{\sigma_*^3s^{3/2}}+ \frac{\sqrt t}{\sigma_*^4  s^2}.
\end{align*} 
Thus, by \eqref{y-tail-est},
$$
\int_t^1\mathcal I_2(s)ds \lesssim \frac{M(\psi)(\log d)^2}{n\sigma_*^4}\left(\kappa\phi|\log t|+\frac{1}{\sqrt t} + \frac{\overline\varrho\phi}{\sqrt{t\log d}}\right).
$$

Combining all terms and recalling that $|\log t|\lesssim \log n$, we now have
\begin{multline}\label{somewhere new}
\varrho'
\leq c\bigg[
(\log n)\left(
\Delta_0+\sqrt{\Delta_1\log d}+\frac{(\mathcal M\log d)^2}{n\sigma_*^2}
\right)\\
+\left(\Delta_1 + \frac{M(\psi)(\log d)^2}{n\sigma_*^4}\right)\left(\kappa\phi\log n + \frac{1}{\sqrt t} +\frac{(\varrho'+\overline\varrho)\phi}{\sqrt{t\log d}}\right)
+ \sqrt{t} \log d + \frac{\sqrt{\log d}}{\phi}
\bigg], 
\end{multline}
where $c>0$ is a universal constant; compare with \eqref{somewhere}. Here, we set
$$
\phi := \frac{1}{2ec\sqrt{\left(\Delta_1 + \frac{M(\psi)(\log d)^2}{n\sigma_*^4}\right)\log n}}
$$
and use $t$ defined in \eqref{eq: t new} to obtain
\begin{multline}
\varrho' \leq C'\bigg[(\log n)\left(
\Delta_0+\sqrt{\Delta_1\log d}+\frac{(\mathcal M\log d)^2}{n\sigma_*^2}
\right) \\
+ \sqrt{\left(\Delta_1 + \frac{M(\psi)(\log d)^2}{n\sigma_*^4}\right)(\log n)\log(dn)} + \frac{\psi(\log d)^{3/2}}{\sigma_*\sqrt n}\bigg] + \frac{\varrho' + \overline\varrho}{2e}
\end{multline}
since $\kappa\lesssim \sqrt{\log(dn)}$ by \eqref{eq: kappa final}, where $C'$ is a universal constant. Hence, rearranging the terms and substituting the definition of $\Lambda_1$,
$$
\varrho' \leq 2C'\left[(\log n)\left(\Delta_0 + \sqrt{\Delta_1\log d} + \frac{(\mathcal M \log d)^2}{n\sigma_*^2}\right) + \sqrt{\frac{\Lambda_1M(\psi)}{n\sigma_*^4}} + \frac{\psi(\log d)^{3/2}}{\sigma_*\sqrt n}\right] + \frac{\overline\varrho}{e}.
$$
Now we note that this bound is valid even when $n^{-1}\sum_{i=1}^n\Ep X_{ij}^2<1/2$ for some $j$; in fact, we have $\Delta_0\geq(\log d/\sigma_*^2)|\Sigma_{jj}-n^{-1}\sum_{i=1}^n\Ep X_{ij}^2|>1/2$ in this case because $\Sigma_{jj}=1$ and $\sigma_*\leq1$. 
As a result, this bound holds without the restriction $n^{-1}\sum_{i=1}^n\Ep X_{ij}^2\geq1/2$ for all $j$ imposed at the beginning of the proof. 
We now iterate this bound to obtain inequalities for each $\varrho^{\{i\}}$ and repeat the procedure $[\log n] + 1$ times, dropping one observation at a time. 
Here, given a proper subset $\mathcal I\subset\{1,\dots,n\}$, we apply this bound to $\varrho^{\mathcal I}$ with replacing $(X_{i})_{i=1}^n$ and $\psi$ by $(\sqrt{\frac{n-|\mathcal I|}{n}}X_{i})_{i\in\mathcal I^c}$ and $\sqrt{\frac{n-|\mathcal I|}{n}}\psi$, respectively. Then, denoting the corresponding $\Delta_0,\Delta_1,\mathcal{M},M(\psi)$ by $\Delta_0^{\mathcal I},\Delta_1^{\mathcal I},\mathcal{M}^{\mathcal I},M^{\mathcal I}(\sqrt{\frac{n-|\mathcal I|}{n}}\psi)$ respectively, we have
\begin{align*}
\Delta_0^{\mathcal I}&=\frac{\log d}{\sigma_*^2}\max_{1\leq j,k\leq d}\bigg|\Sigma_{jk}-\frac{1}{n-|\mathcal I|}\sum_{i\in\mathcal I^c}\frac{n-|\mathcal I|}{n}\Ep X_{ij}X_{ik}\bigg|
\leq\Delta_0+\frac{\log d}{n\sigma_*^2}\max_{1\leq j\leq d}\sum_{i\in\mathcal I}\Ep X_{ij}^2\\
&\leq\Delta_0+\frac{\log d}{n\sigma_*^2}\max_{1\leq j\leq d}\sqrt{|\mathcal I|\sum_{i\in\mathcal I}\Ep X_{ij}^4}
\leq\Delta_0+\sqrt{|\mathcal I|\Delta_1\log d},\\
\Delta_1^{\mathcal I}&=\frac{(\log d)^2}{(n-|\mathcal I|)^2\sigma_*^4}\max_{1\leq j\leq d}\sum_{i\in\mathcal I^c}\frac{(n-|\mathcal I|)^2}{n^2}\Ep X_{ij}^4\leq\Delta_1,
\end{align*}
and
\begin{align*}
\frac{(\mathcal{M}^{\mathcal I})^2}{n-|\mathcal I|}&\leq\frac{\mathcal{M}^2}{n},\qquad
\frac{M^{\mathcal I}(\sqrt{\frac{n-|\mathcal I|}{n}}\psi)}{n-|\mathcal I|}\leq\frac{M(\psi)}{n}.
\end{align*}
Note that $\sum_{i=1}^\infty e^{-i}\sqrt{i}<\infty$. 
We thus obtain 
$$
\varrho'\lesssim (\log n)\left(\Delta_0 + \sqrt{\Delta_1\log d} + \frac{(\mathcal M \log d)^2}{n\sigma_*^2}\right) + \sqrt{\frac{\Lambda_1M(\psi)}{n\sigma_*^4}} + \frac{\psi(\log d)^{3/2}}{\sigma_*\sqrt n} + \frac{\widehat\varrho}{e^{\log n}},
$$
where
$$
\widehat\varrho := \frac{1}{|\widehat{\mathcal N}|}\sum_{\mathrm N\in \widehat{\mathcal N}}\sup_{A\in\mathcal R}\left|\Pr\left(\frac{1}{\sqrt{n}}\sum_{i\in\mathrm N} X_i \in A\right) - \Pr(Z\in A)\right|,\quad Z\sim N(0,\Sigma),
$$
and $\widehat{\mathcal N}:=\{\mathrm N\subset\{1,\dots,n\}\colon |\mathrm N|=n-[\log n]-2\}$.
Since $\widehat\varrho\leq 1$ and $\Delta_1/\log d\geq 1/(4n)$, as discussed above, the asserted claim follows.
\end{proof}

\begin{proof}[Proof of Corollary \ref{thm: simple}]
Since 
\[
\sup_{A\in \mathcal{R}} |\Pr(W\in A)- \Pr(Z\in A)|=\sup_{A\in \mathcal{R}} |\Pr(SW\in A)- \Pr(SZ\in A)|
\]
for any $d\times d$ diagonal matrix $S$, we assume, without loss of generality, that $\sigma_j=1$ for all $j=1,\dots,d$. Then $\sigma_{*,W}$ is the square root of the smallest eigenvalue of $\Sigma_W=\Ep WW^T$. To prove the asserted claims, we will apply Theorems \ref{t1} and \ref{thm: general unbounded} with $\Sigma = \Sigma_W$, so that $\sigma_* = \sigma_{*,W}$.

Consider first the case when (E.1) holds. By Jensen's inequality, $\Delta_1 \geq (\log d)^2/(n\sigma_{*,W}^4) \geq (\log d)/n$, and so 
$$
\left|\log\left(\frac{\Delta_1}{\log d} + \frac{\delta^2\log d}{\sigma_*^2}\right)\right|\leq \log n.
$$ 
Also, since (E.1) implies (M),
\begin{equation}\label{eq: m delta bound}
(\log n)\sqrt{\Delta_1\log d} \leq \frac{B_n(\log d)^{3/2}(\log n)}{\sqrt n\sigma_{*,W}^2}.
\end{equation}
Combining these inequalities and using Theorem \ref{t1} with $\delta = B_n / \sqrt n$ and $\Delta_0=0$ gives the asserted claim under condition (E.1).

Next, consider the case when (M) and (E.2) hold. Without loss of generality, we assume that
\begin{equation}\label{eq: as proof}
\frac{B_n(\log d)^{3/2}\log n}{\sqrt n \sigma_{*,W}}\leq 1
\end{equation}
since otherwise the asserted claims are trivial. 
\eqref{eq: m delta bound} holds by condition (M). In addition, $\mathcal M\lesssim B_n\sqrt{\log(d n)}$ by Lemma 2.2.2 and discussion on page 95 of \cite{VW96}, and so
\begin{equation}\label{eq: third e1}
\frac{(\log n)(\mathcal M\log d)^2}{n\sigma_{*,W}^2}\lesssim \frac{B_n^2(\log d)^2(\log n)\log(dn)}{n\sigma_{*,W}^2}\lesssim \frac{B_n(\log d)^{3/2}\log n}{\sqrt n\sigma_{*,W}^2}
\end{equation}
by \eqref{eq: as proof}. Moreover, since for any $i=1,\dots,n$ and $\psi>0$,
$$
\Ep\|X_i\|_{\infty}^41\{\|X_i\|_{\infty} > \psi\}\leq\left(\Ep[\|X_i\|_{\infty}^8]\Pr(\|X_i\|_{\infty}>\psi)\right)^{1/2}\lesssim B_n^4\sqrt{\Pr(\|X_i\|_{\infty}>\psi)},
$$
setting $\psi = CB_n\sqrt{\log(d n)}$ for a sufficiently large but universal constant $C$, we have
\begin{align}
\sqrt{\frac{\Lambda_1M(\psi)}{n\sigma_{*,W}^4}} + \frac{\psi(\log d)^{3/2}}{\sqrt n\sigma_{*,W}}
&\lesssim \frac{B_n(\log d)^{3/2}\log n}{\sqrt n\sigma_{*,W}^2} + \frac{B_n(\log d)^{3/2}\sqrt{\log(dn)}}{\sqrt n\sigma_{*,W}}\nonumber\\
&\lesssim \frac{B_n(\log d)^{3/2}\log n}{\sqrt n\sigma_{*,W}^2} + \frac{B_n(\log d)^{2}}{\sqrt n\sigma_{*,W}},\label{eq: last two e1}
\end{align}
where the last inequality follows from $\sigma_{*,W}\leq 1$. Combining $\Delta_0=0$, \eqref{eq: m delta bound}, \eqref{eq: third e1}, and \eqref{eq: last two e1} and applying Theorem \ref{thm: general unbounded} gives the asserted claim under conditions (M) and (E.2).

Now consider the case when (M) and (E.3) hold. In this case, 
$$
\mathcal M\leq \left( \Ep\left[ \sum_{i=1}^n \max_{1\leq j\leq d}|X_{i j}|^q \right] \right)^{1/q}\leq n^{1/q}B_n,
$$
and so
\begin{equation}\label{eq: third e2}
\frac{(\log n)(\mathcal M\log d)^2}{n\sigma_{*,W}^2} \leq \frac{B_n^2(\log d)^2\log n}{n^{1-2/q}\sigma_{*,W}^2}.
\end{equation}
Also, \eqref{eq: m delta bound} holds by the same arguments as those in the previous case. In addition, since for any $i=1,\dots,n$ and $\psi>0$,
$$
\Ep\|X_i\|_{\infty}^41\{\|X_i\|_{\infty} > \psi\}\leq \Ep\|X_i\|_{\infty}^q/\psi^{q-4} \leq B_n^q/\psi^{q-4},
$$
setting
$$
\psi=\left(\frac{B_n^q(\log n)\log(dn)}{\sigma_{*,W}^2\log d}\right)^{1/(q-2)},
$$
we obtain
\begin{equation}\label{eq: last e2}
\sqrt{\frac{\Lambda_1M(\psi)}{n\sigma_{*,W}^4}} + \frac{\psi(\log d)^{3/2}}{\sqrt n\sigma_{*,W}}\lesssim\left(\frac{B_n^q(\log d)^{3q/2-4}(\log n)\log(d n)}{n^{q/2-1}\sigma_{*,W}^q}\right)^{\frac{1}{q-2}}.
\end{equation}
Combining $\Delta_0=0$, \eqref{eq: m delta bound}, \eqref{eq: third e2}, and \eqref{eq: last e2} and applying Theorem \ref{thm: general unbounded} gives the asserted claim under conditions (M) and (E.3) and completes the proof of the theorem.
\end{proof}

\begin{proof}[Proof of Proposition \ref{prop:lower-bound}]
For every $n\geq1$, let $(X_{n,ij})_{i,j=1}^\infty$ be an array of i.i.d.~variables such that
\[
\Pr\left(X_{n,ij}=a_n\right)=1-\Pr\left(X_{n,ij}=b_n\right)=p_n,
\]
where
\[
p_n:=\frac{1}{B_n^2},\qquad
a_n:=\sqrt{\frac{1-p_n}{p_n}},\qquad
b_n:=-\sqrt{\frac{p_n}{1-p_n}}.
\]
Since $B_n\geq2$, we have $|X_{n,ij}|\leq \max\{B_n,1/\sqrt3\}\leq B_n$. 
Also, it is straightforward to check that $\Ep[X_{n,ij}]=0$ and $\Ep[X_{n,ij}^2]=1$. 
Therefore, we complete the proof once we show that there is a sequence $(x_n)_{n=1}^\infty$ of real numbers such that
\[
\rho:=\liminf_{n\to\infty}\frac{\sqrt{n}}{B_n\log^{3/2}d}\left|\Pr\left(\max_{1\leq j\leq d}W_{n,j}\leq x_n\right)-\Pr\left(\max_{1\leq j\leq d}Z_{j}\leq x_n\right)\right|>0,
\]
where
\[
W_{n,j}:=\frac{1}{\sqrt n}\sum_{i=1}^nX_{n,ij}.
\]

For every $n$, we define $x_n\in\mathbb{R}$ as the solution of the equation $\Phi_1(x)^{d}=e^{-1}$, i.e.~$x_n:=\Phi_1^{-1}(e^{-1/d})$. 
Then we have $x_n/\sqrt{2\log d}\to1$ as $n\to\infty$ (cf.~the proof of Proposition 2.1 in \cite{K19}). We also have
\ben{\label{est:normal-tail}
d(1-\Phi_1(x_n))=d(1-e^{-1/d})=1+O(d^{-1})\qquad\text{as }n\to\infty.
}
Now, applying Theorem 1 in \cite{AGG89} with $I=\{1,\dots,d\}$, $B_\alpha=\{\alpha\}$ and $X_\alpha=1_{\{W_{n,\alpha}>x_n\}}$ in their notation, we obtain
\begin{align*}
\left|\Pr\left(\max_{1\leq j\leq d}W_{n,j}\leq x_n\right)-e^{-\lambda_n}\right|
\leq d\Pr\left(W_{n,1}>x_n\right)^2,
\end{align*}
where $\lambda_n:=d\Pr\left(W_{n,1}>x_n\right)$. 
Meanwhile, we have by definition
\[
\Pr\left(\max_{1\leq j\leq d}Z_{j}\leq x_n\right)=\Phi_1(x_n)^d=e^{-1}.
\]
Hence we obtain
\begin{equation}\label{eq:chen-stein}
\left|\Pr\left(\max_{1\leq j\leq d}W_{n,j}\leq x_n\right)-\Pr\left(\max_{1\leq j\leq d}Z_j\leq x_n\right)\right|
\geq|e^{-\lambda_n}-e^{-1}|-\frac{\lambda_n^2}{d}.
\end{equation}
To evaluate $\lambda_n$, we apply Theorem 2.1 in \cite{FLS20} with $m=n$, $X_\alpha=X_{n,\alpha1}$, $\mathcal{I}_i=\{i\}$, $\xi_i=X_{n,i1}/\sqrt n$, $\delta=\delta_n:=B_n/\sqrt n$ and $s=d=1$ in their notation. 
We have by assumption
\ben{\label{est:delta}
n^2\delta_n^5x_n^2=O\left(\frac{B_n^5\log d}{\sqrt n}\right)=o\left(\frac{B_n\log^{3/2} d}{\sqrt{n}}\right)=o(1).
}
Hence there is a constant $C_0>0$ such that $\sqrt{n^2\delta_n^5}x_n\leq C_0$ for all $n$. 
Therefore, Theorem 2.1 in \cite{FLS20} yields
\begin{equation}\label{eq:mdp}
\left|\frac{\Pr\left( W_{n,1}>x_n\right)}{(1-\Phi_1(x_n))e^{\gamma_n x_n^3/6}}-1\right|\leq Cn^2\delta_n^5(1+x_n^2)\qquad\text{for all }n,
\end{equation}
where $\gamma_n:=\Ep[W_{n,1}^3]$ and $C$ is a constant depending only on $C_0$. 
Since
\[
\gamma_n=\frac{\Ep[X_{n,11}^3]}{\sqrt n}=\frac{1-2p_n}{\sqrt{np_n(1-p_n)}},
\]
we have
\ben{\label{est:skewness}
\liminf_{n\to\infty}\frac{\gamma_nx_n^3}{B_n(\log d)^{3/2}/\sqrt{n}}
\geq\liminf_{n\to\infty}\frac{1}{2B_n\sqrt{p_n}}\left(\frac{x_n}{\sqrt{\log d}}\right)^3
=\sqrt{2}.
}
In particular, $\gamma_nx_n^3=O(B_n(\log d)^{3/2}/\sqrt{n})=o(1)$ by assumption. Combining this estimate with \eqref{est:normal-tail} and \eqref{est:delta}, we deduce from \eqref{eq:mdp}
\begin{align*}
\lambda_n&=\frac{\Pr\left( W_{n,1}>x_n\right)}{(1-\Phi_1(x_n))e^{\gamma_n x_n^3/6}}\cdot d(1-\Phi_1(x_n))e^{\gamma_n x_n^3/6}\\
&=d(1-\Phi_1(x_n))e^{\gamma_n x_n^3/6}+o\left(\frac{B_n\log^{3/2} d}{\sqrt{n}}\right)\\
&=e^{\gamma_n x_n^3/6}+O\left(\frac{1}{d}\right)+o\left(\frac{B_n\log^{3/2} d}{\sqrt{n}}\right).
\end{align*}
Using the Maclaurin expansion of the exponential function and \eqref{est:skewness}, we obtain
\[
\lambda_n=1+\frac{\gamma_n x_n^3}{6}+O\left(\frac{1}{d}\right)+o\left(\frac{B_n\log^{3/2} d}{\sqrt{n}}\right)
\]
and thus
\[
e^{-\lambda_n+1}=1-\frac{\gamma_n x_n^3}{6}+O\left(\frac{1}{d}\right)+o\left(\frac{B_n\log^{3/2} d}{\sqrt{n}}\right).
\]
Note that we particularly have $\lambda_n=O(1)$. Thus, \eqref{eq:chen-stein} yields
\ba{
\rho\geq\liminf_{n\to\infty}\frac{\sqrt{n}}{B_n\log^{3/2}d}|e^{-\lambda_n}-e^{-1}|
=\liminf_{n\to\infty}e^{-1}\frac{\sqrt{n}}{B_n\log^{3/2}d}\frac{\gamma_n x_n^3}{6},
}
where we used the assumption $d^{-1}=o(B_n\log^{3/2}d/\sqrt n)$. Hence we obtain by \eqref{est:skewness}
\[
\rho\geq e^{-1}\frac{\sqrt{2}}{6}>0.
\]
This completes the proof. 
\end{proof}

\begin{remark}
In the above proof, it seems impossible to use a more traditional moderate deviation result instead of Theorem 2.1 in \cite{FLS20}. This is because such a one requires that the moment generating function of $X_{n,11}$ is bounded uniformly in $n$ on a neighborhood of the origin (see Lemma 4.1 in \cite{JKV88} for instance).
\qed
\end{remark}

\section{Proofs for Section \ref{sec: bootstrap approximation}}

\begin{proof}[Proof of Theorem \ref{thm: multiplier bootstrap}]
The asserted claim is an immediate consequence of Theorem 1.1 in \cite{FK20} (restated as Lemma \ref{FK:GC} in this paper) once we note that (i) conditional on $X$, the random vector $W^{\xi}=n^{-1}\sum_{i=1}^n\xi_i(X_i-\bar X)(X_i-\bar X)^T$ is centered Gaussian with covariance matrix $n^{-1}\sum_{i=1}^n(X_i-\bar X)(X_i-\bar X)^T$ and (ii) for centered Gaussian random vectors, the Stein kernel is equal to the covariance matrix.
\end{proof}

\begin{proof}[Proof of Corollary \ref{thm: multiplier bootstrap simple}]
Like in the proof of Corollary \ref{thm: simple}, we assume, without loss of generality, that $\sigma_j=1$ for all $j=1,\dots,d$, so that $\sigma_{*,W}$ is the square root of the smallest eigenvalue of $\Sigma_W=\Ep WW^T$. To prove the asserted claims, we will apply Theorem \ref{thm: multiplier bootstrap} with $\Sigma=\Sigma_W$, so that $\sigma_*= \sigma_{*,W}$. This requires bounding $\Delta_0'$ with $\Sigma=\Sigma_W$. We do so separately for each case.

Consider first the case when (E.1) holds. We assume, without loss of generality, that
\begin{equation}\label{eq: mult boot simple e1}
\frac{B_n(\log d)(\log n)\sqrt{\log(d/\alpha)}}{\sqrt n\sigma_{*,W}^2}\leq 1,
\end{equation}
since otherwise the asserted claim is trivial. Using this assumption, we will now prove that there exists a universal constant $C'\geq 1$ such that
\begin{equation}\label{eq: e1need to proof}
\Pr\left(\Delta_0' > \frac{C' B_n(\log d)\sqrt{\log(d/\alpha)}}{\sqrt n\sigma_{*,W}^2}\right)\leq \alpha.
\end{equation}
This derivation is similar to the proof of Proposition 4.1 in \cite{CCK17}.

Note that 
\begin{equation}\label{eq: decomposition m boot proof}
\sigma_{*,W}^2\Delta_0'/\log d\leq \Delta^{(1)} + \Delta^{(2)},
\end{equation}
where 
\begin{equation}\label{eq: delta 1 delta 2}
\Delta^{(1)} := \left\| \frac{1}{n}\sum_{i=1}^n(X_iX_i^T - \Ep X_iX_i^T) \right\|_{\infty},\quad \Delta^{(2)} := \|\bar X\bar X^T\|_{\infty} = \|\bar X\|_{\infty}^2.
\end{equation}
We first bound $\Delta^{(1)}$. To do so, since (E.1) implies (M),
$$
\sigma_n^2:=\max_{1\leq j,k\leq d}\sum_{i=1}^n \Ep(X_{ij}X_{ik}-\Ep[X_{ij}X_{ik}])^2 \leq \max_{1\leq j,k\leq d}\sum_{i=1}^n \Ep(X_{ij}X_{ik})^2 \leq n B_n^2.
$$
Also,
$$
\left\| \max_{1\leq i\leq n}\max_{1\leq j,k\leq d}|X_{ij}X_{ik}| \right\|_{\psi_1}\lesssim B_n^2\log(dn),
$$
so that $M_n:= \max_{1\leq i\leq n}\max_{1\leq j,k\leq d}|X_{ij}X_{ik} - \Ep[X_{ij}X_{ik}]|$ satisfies
$$
\sqrt{\Ep M_n^2}\lesssim \|M_n\|_{\psi_1} \lesssim B_n^2\log(d n).
$$
Hence, by Lemma \ref{lem: maximal},
\begin{align*}
\Ep\Delta^{(1)} &\lesssim n^{-1}\left(\sqrt{\sigma_n^2\log d} + \sqrt{\Ep M_n^2}\log d\right)\\
&\lesssim \sqrt{n^{-1}B_n^2\log d} + n^{-1}B_n^2(\log d)\log(d n) \lesssim \sqrt{n^{-1}B_n^2\log d},
\end{align*}
where the last inequality follows from \eqref{eq: mult boot simple e1}. Thus, applying Lemma \ref{lem: fuk-nagaev}(i) with $\beta=\eta=1$, we have for all $t>0$ that
$
\Delta^{(1)} \lesssim \sqrt{n^{-1}B_n^2\log d}+t
$
with probability at least
$$
1 - \exp\left(-\frac{nt^2}{3B_n^2}\right) - 3\exp\left(-\frac{cnt}{B_n^2\log(d n)}\right),
$$ 
where $c>0$ is a universal constant. Setting here
$$
t = \frac{B_n\sqrt{3\log(4/\alpha)}}{\sqrt n} + \frac{B_n^2\log(d n)\log(12/\alpha)}{c n}
$$
and recalling \eqref{eq: mult boot simple e1}, we conclude that 
$$
\Delta^{(1)} \lesssim \sqrt{\frac{B_n^2\log(d/\alpha)}{n}}
$$
with probability at least $1 - \alpha/2$, and, by the same argument, we can also find that
$$
\Delta^{(2)} \lesssim \sqrt{\frac{B_n^2\log(d/\alpha)}{n}}
$$
again with probability at least $1 - \alpha/2$. Combining these inequalities and recalling \eqref{eq: decomposition m boot proof}, we obtain \eqref{eq: e1need to proof}.

Now, observe that the function $f\colon(0,1)\to\mathbb R$ defined by $f(x):=x(1\vee|\log x|)$ for all $x\in(0,1)$ is increasing. Also, note that by \eqref{eq: mult boot simple e1},
$$
\left|\log\left(\frac{C'B_n\sqrt{\log(d/\alpha)}}{\sqrt n\sigma_{*,W}^2}\right)\right| \lesssim \log n.
$$
Therefore, the asserted claim under condition (E.1) follows from combining Theorem \ref{thm: multiplier bootstrap} and \eqref{eq: e1need to proof}.

Further, the asserted claim in the case when (M) and (E.2) hold can be proven using the exactly same calculations as those in the case of (E.1). 

Now consider the case when (M) and (E.3) hold. In this case, we assume, again without loss of generality, that
\begin{equation}\label{eq: e2 mult almost there}
\frac{(\log d)(\log n)}{\sigma_{*,W}^2}\left(\frac{B_n\sqrt{\log(d/\alpha)}}{\sqrt n} + \frac{B_n^2(\log  d + \alpha^{-2/q})}{n^{1-2/q}}\right)\leq 1.
\end{equation} 
Then, defining $\sigma_n^2$ and $M_n$ as above, we have $\sigma_n^2\leq nB_n^2$ and
$
\sqrt{\Ep M_n^2}\lesssim \|M_n\|_{L_{q/2}}\lesssim n^{2/q}B_n^2.
$
Hence, by Lemma \ref{lem: maximal}, $\Delta^{(1)}$ defined in \eqref{eq: delta 1 delta 2} satisfies
\begin{align*}
\Ep\Delta^{(1)} &\lesssim n^{-1}\left(\sqrt{\sigma_n^2\log d} + \sqrt{\Ep M_n^2}\log d\right)\lesssim \sqrt{n^{-1}B_n^2\log d} + n^{-1+2/q}B_n^2\log d.
\end{align*}
Thus, applying Lemma \ref{lem: fuk-nagaev}(ii) with $\eta=1$ and $s=q/2$, we have for all $t>0$ that
$$
\Delta^{(1)}\lesssim \sqrt{n^{-1}B_n^2\log d} + n^{-1+2/q}B_n^2\log d + t
$$
with probability at least
$$
1-\exp\left(-\frac{nt^2}{3B_n^2}\right) - \frac{cnB_n^{q}}{n^{q/2}t^{q/2}},
$$
where $c>0$ is a universal constant. Setting here
$$
t=\frac{B_n\sqrt{3\log(4/\alpha)}}{\sqrt n} + \frac{(4c/\alpha)^{2/q}B_n^2}{n^{1-2/q}},
$$
we conclude that
$$
\Delta^{(1)}\lesssim \frac{B_n\sqrt{\log(d/\alpha)}}{\sqrt n} + \frac{B_n^2(\log d+\alpha^{-2/q})}{n^{1-2/q}}
$$
with probability at least $1 - \alpha/2$, and, by the same argument, we can also find, for $\Delta^{(2)}$ defined in \eqref{eq: delta 1 delta 2}, that
$$
\Delta^{(2)}\lesssim \frac{B_n\sqrt{\log(d/\alpha)}}{\sqrt n} + \frac{B_n^2(\log d+\alpha^{-2/q})}{n^{1-2/q}}
$$
again with probability at least $1 - \alpha/2$. Combining these inequalities and using \eqref{eq: decomposition m boot proof}, we obtain
$$
\Pr\left(\Delta_0' > \frac{C'\log d}{\sigma_{*,W}^2}\left(\frac{B_n\sqrt{\log(d/\alpha)}}{\sqrt n} + \frac{B_n^2(\log d + \alpha^{-2/q})}{n^{1-2/q}}\right)\right)\leq 1-\alpha,
$$
where $C'\geq 1$ is a universal constant. Here,
$$
\left|\log\left( \frac{C'}{\sigma_{*,W}^2}\left(\frac{B_n\sqrt{\log(d/\alpha)}}{\sqrt n} + \frac{B_n^2(\log d + \alpha^{-2/q})}{n^{1-2/q}}\right) \right)\right|\lesssim\log n
$$
by \eqref{eq: e2 mult almost there}. The proof can now be completed by applying Theorem \ref{thm: multiplier bootstrap} as we did in the case of (E.1).
\end{proof}

\begin{proof}[Proof of Theorem \ref{thm: empirical bootstrap}]
The asserted claim follows as a direct application of Theorem \ref{thm: general unbounded} once we note that $\Delta_0$, $\Delta_1$, and $M(\psi)$ are equal to $\Delta_0'$, $\Delta_1'$, and $M^*(\psi)$ and $\mathcal M$ is bounded from above by $\mathcal M^*$ if we substitute $X_1^*-\bar X,\dots,X_n^* - \bar X$ instead of $X_1,\dots,X_n$ in Theorem \ref{thm: general unbounded}. 
\end{proof}

\begin{proof}[Proof of Corollary \ref{cor: emp boot simple}]
Like in the proof of Corollary \ref{thm: simple}, we assume, without loss of generality, that $\sigma_j=1$ for all $j=1,\dots,d$, so that $\sigma_{*,W}$ is the square root of the smallest eigenvalue of $\Sigma_W=\Ep WW^T$. To prove the asserted claims, we will apply Theorem \ref{thm: empirical bootstrap} with $\Sigma=\Sigma_W$, so that $\sigma_*= \sigma_{*,W}$. This requires bounding all terms appearing in Theorem \ref{thm: empirical bootstrap}. We do so separately for each case.

Consider first the case when (E.1) holds. We assume, without loss of generality, that
\begin{equation}\label{eq: emp boot simple e1}
\frac{B_n(\log d)(\log n)\sqrt{\log(d/\alpha)}}{\sqrt n\sigma_{*,W}^2}\leq 1,
\end{equation}
since otherwise the asserted claim is trivial. Then by the proof of Corollary \ref{thm: multiplier bootstrap simple},
\begin{equation}\label{eq: delta0 prime from m boot}
\Delta_0'\log n\lesssim \frac{B_n(\log d)(\log n)\sqrt{\log(d/\alpha)}}{\sqrt n\sigma_{*,W}^2}
\end{equation}
with probability at least $1-\alpha/2$. Also, by Lemma \ref{lem: maximal ineq nonnegative} and \eqref{eq: emp boot simple e1},
$$
\Ep\max_{1\leq j\leq d}\sum_{i=1}^n|X_{ij}|^4 \lesssim nB_n^2+B_n^4\log d\lesssim nB_n^2.
$$
Thus, by Lemma \ref{lem: deviation ineq nonnegative}(i) with $\eta=\beta=1$, we have for any $t>0$ that
$
\max_{1\leq j\leq d}\sum_{i=1}^n |X_{ij}|^4\lesssim nB_n^2 + t
$
with probability at least
$$
1-3\exp\left(-\frac{t}{cB_n^4}\right),
$$
where $c>0$ is a universal constant. Setting here $t=cB_n^4\log(6/\alpha)$, using Jensen's inequality, and recalling \eqref{eq: emp boot simple e1}, we have that
$$
\max_{1\leq j\leq d}\sum_{i=1}^n (X_{ij} - \bar X_j)^4\lesssim \max_{1\leq j\leq d}\sum_{i=1}^n |X_{ij}|^4\lesssim nB_n^2+B_n^4\log(6/\alpha)\lesssim nB_n^2
$$
with probability at least $1 - \alpha/2$, and so
$$
(\log n)\sqrt{\Delta_1'\log d} \lesssim \frac{B_n(\log d)^{3/2}\log n}{\sqrt n \sigma_{*,W}^2}
$$
with the same probability. Further, $\mathcal M^*\lesssim B_n$, and so
$$
\frac{(\mathcal M^*\log d)^2\log n}{n\sigma_{*,W}^2}\lesssim \frac{B_n^2(\log d)^2\log n}{n\sigma_{*,W}^2}\lesssim \frac{B_n(\log d)^{3/2}\log n}{\sqrt n \sigma_{*,W}^2}
$$
by \eqref{eq: emp boot simple e1}.
Finally, setting $\psi=2B_n$ gives $M^*(\psi)=0$ and
$$
\frac{\psi(\log d)^{3/2}}{\sqrt n\sigma_{*,W}}\lesssim \frac{B_n(\log d)^{3/2}}{\sqrt n\sigma_{*,W}}.
$$
Combining presented inequalities and applying Theorem \ref{thm: empirical bootstrap} gives the asserted claim under condition (E.1).

Next, consider the case when (M) and (E.2) hold. We assume, without loss of generality, that 
\begin{equation}\label{eq: emp boot simple e2}
\frac{B_n(\log d)(\log n)\sqrt{\log(d/\alpha)}}{\sqrt n\sigma_{*,W}^2}+\frac{B_n(\log(dn))^2\sqrt{\log(1/\alpha)}}{\sqrt n\sigma_{*,W}}\leq 1
\end{equation}
since otherwise the asserted claim is trivial. Then, again by the proof of Corollary \ref{thm: multiplier bootstrap simple}, $\Delta_0'$ satisfies \eqref{eq: delta0 prime from m boot} with probability at least $1 - \alpha/4$. Further, by Jensen's inequality, \eqref{eq: emp boot simple e2}, and Lemmas \ref{lem: maximal ineq nonnegative} and \ref{lem: deviation ineq nonnegative}(i) with $\eta=1$ and $\beta=1/2$,
\begin{align*}
\max_{1\leq j\leq d}\sum_{i=1}^n (X_{ij} - \bar X_j)^4
&\lesssim nB_n^2 + B_n^4(\log(d n))^2\log d + B_n^4(\log(d n))^2(\log(1/\alpha))^2\\
&\lesssim nB_n^2 + B_n^4(\log(d n))^2(\log(1/\alpha))^2,
\end{align*}
with probability at least $1-\alpha/4$, and so
\begin{align*}
(\log n)\sqrt{\Delta_1'\log d}
&\lesssim \frac{B_n(\log d)^{3/2}\log n}{\sqrt n\sigma_{*,W}^2} + \frac{B_n^2(\log d)^{3/2}\log(dn)\log(1/\alpha)\log n}{n\sigma_{*,W}^2}\\
&\lesssim \frac{B_n(\log d)^{3/2}\log n}{\sqrt n\sigma_{*,W}^2} + \frac{B_n(\log(dn))^2\sqrt{\log(1/\alpha)}}{\sqrt n\sigma_{*,W}}
\end{align*}
with the same probability by \eqref{eq: emp boot simple e2}. In addition, $\mathcal M^*\lesssim B_n\sqrt{\log(dn)}\sqrt{\log(1/\alpha)}$ with probability at least $1-\alpha/4$, and so
$$
\frac{(\mathcal M^*\log d)^2\log n}{n\sigma_{*,W}^2}\lesssim \frac{B_n^2(\log(d n))^4\log(1/\alpha)}{n\sigma_{*,W}^2}\lesssim \frac{B_n(\log(dn))^2\sqrt{\log(1/\alpha)}}{\sqrt n\sigma_{*,W}}
$$
with the same probability by \eqref{eq: emp boot simple e2}. Finally, setting $\psi=C'B_n\sqrt{\log(dn)}\sqrt{\log(1/\alpha)}$ with a sufficiently large but universal constant $C'>0$, we have $M^*(\psi)=0$ with probability at least $1 - \alpha/4$, and
$$
\frac{\psi(\log d)^{3/2}}{\sqrt n\sigma_{*,W}}\lesssim \frac{B_n(\log(d n))^2\sqrt{\log(1/\alpha)}}{\sqrt n\sigma_{*,W}}.
$$
Combining presented inequalities and applying Theorem \ref{thm: empirical bootstrap} gives the asserted claim under conditions (M) and (E.2).

Now consider the case when (M) and (E.3) hold. We assume, without loss of generality, that
\begin{equation}\label{eq: emp boot simple e3}
\frac{B_n(\log d)(\log n)\sqrt{\log(d/\alpha)}}{\sqrt n\sigma_{*,W}^2}+\frac{B_n\sqrt{\log(d n)}\log d}{n^{1/2-1/q}\alpha^{1/q}\sigma_{*,W}}\leq1
\end{equation}
since otherwise the asserted claim is trivial. Using this assumption, by the proof of Corollary \ref{thm: multiplier bootstrap simple},
\begin{align*}
\Delta_0'\log n &\lesssim \frac{(\log d)(\log n)}{\sigma_{*,W}^2}\left(\frac{B_n\sqrt{\log(d/\alpha)}}{\sqrt n} + \frac{B_n^2(\log  d + \alpha^{-2/q})}{n^{1-2/q}}\right)\\
&\lesssim \frac{B_n(\log d)(\log n)\sqrt{\log(d/\alpha)}}{\sqrt n\sigma_{*,W}^2}+\frac{B_n\sqrt{\log(d n)}\log d}{n^{1/2-1/q}\alpha^{1/q}\sigma_{*,W}}
\end{align*}
with probability at least $1 - \alpha/4$. Also, given that
$$
\Ep\max_{1\leq i\leq n}\max_{1\leq j\leq d}|X_{i j}|^4 \leq \left(\Ep\max_{1\leq i\leq n}\max_{1\leq j\leq d}|X_{i j}|^q\right)^{4/q}\leq n^{4/q}B_n^4,
$$
we have by Lemma \ref{lem: maximal ineq nonnegative} that
$$
\Ep\max_{1\leq j\leq d}\sum_{i=1}^n |X_{ij}|^4\lesssim n B_n^2 + n^{4/q}B_n^4\log d.
$$
Thus, by Lemma \ref{lem: deviation ineq nonnegative}(ii) with $\eta=1$ and $s=q/4$, we have for any $t>0$ that
$$
\max_{1\leq j\leq d}\sum_{i=1}^n |X_{ij}|^4\lesssim nB_n^2 + n^{4/q}B_n^4\log d + t
$$
with probability at least $1-(cnB_n^q)/t^{q/4}$, where $c>0$ is a universal constant. Setting here $t=(4cn/\alpha)^{4/q}B_n^4$ and using Jensen's inequality, we have that
$$
\max_{1\leq j\leq d}\sum_{i=1}^n (X_{ij} - \bar X_j)^4\lesssim \max_{1\leq j\leq d}\sum_{i=1}^n |X_{ij}|^4\lesssim nB_n^2 + n^{4/q}B_n^4(\log d + \alpha^{-4/q})
$$
with probability at least $1-\alpha/4$, and so
\begin{align*}
(\log n)\sqrt{\Delta_1'\log d}&\lesssim \frac{B_n(\log d)^{3/2}\log n}{\sqrt n\sigma_{*,W}^2} + \frac{B_n^2(\log d)^{3/2}(\log n)(\sqrt{\log d} + \alpha^{-2/q})}{n^{1-2/q}\sigma_{*,W}^2}\\
&\lesssim \frac{B_n(\log d)(\log n)\sqrt{\log(d/\alpha)}}{\sqrt n\sigma_{*,W}^2}+\frac{B_n\sqrt{\log(d n)}\log d}{n^{1/2-1/q}\alpha^{1/q}\sigma_{*,W}}
\end{align*}
with the same probability by \eqref{eq: emp boot simple e3}. In addition, by Markov's inequality, for any $t>0$,
$$
\Pr\left(\max_{1\leq j\leq d}\max_{1\leq i\leq n}|X_{i j}|>t\right) \leq t^{-q}\Ep\max_{1\leq j\leq d}\max_{1\leq i\leq n}|X_{i j}|^q\leq nB_n^q/t^q,
$$
and so
$\mathcal M^*\lesssim n^{1/q}B_n/\alpha^{1/q}$ with probability at least $1-\alpha/4$, so that
$$
\frac{(\mathcal M^*\log d)^2\log n}{n\sigma_{*,W}^2}\lesssim \frac{B_n^2(\log d)^2\log n}{n^{1-2/q}\alpha^{2/q}\sigma_{*,W}^2}\lesssim \frac{B_n\sqrt{\log(d n)}\log d}{n^{1/2-1/q}\alpha^{1/q}\sigma_{*,W}}
$$
with the same probability by \eqref{eq: emp boot simple e3}. Finally, setting $\psi = C'n^{1/q}B_n/\alpha^{1/q}$ for a sufficiently large but universal constant $C'>0$, we have $M^*(\psi)=0$ with probability at least $1-\alpha/4$ and
$$
\frac{\psi(\log d)^{3/2}}{\sqrt n\sigma_{*,W}}\lesssim \frac{B_n(\log d)^{3/2}}{n^{1/2-1/q}\alpha^{1/q}\sigma_{*,W}}.
$$
Combining all presented inequalities and applying Theorem \ref{thm: empirical bootstrap} gives the asserted claim under conditions (M) and (E.3).
\end{proof}

\section{Proofs for Section \ref{sec: special cases}}

\begin{proof}[Proof of Theorem \ref{t-QG}]

 
For all $i=1,\dots,n$, denote $\xi_i := X_i/\sqrt n$, so that $W = \sum_{i=1}^n \xi_i$. Also, let $Z\sim N(0,\Sigma)$ be independent of everything else. Then$$
\tilde W = W+ G\quad\text{and}\quad  \tilde Z = Z + G.
$$
In addition, for any $A\in\mathcal R$, let $h^A\colon\mathbb R^d\to\mathbb R$ be the function defined by
 $$h^A(x) := \Ep 1_A(x + G).$$  For brevity of notations, we suppress the dependence on $A$ in what follows.

By Lemma \ref{lemma:fk2.2} with $\phi = \infty$ and $\epsilon=1$,  the function $h$ is infinitely differentiable and each derivative is bounded by a constant
that only depends on the order of the derivative; in particular,
\begin{equation}\label{FKbounds-t2}
\sup_{x\in\mathbb{R}^d}\sum_{j,k=1}^d\left|\partial_{jk} h(x)\right|
\lesssim \frac{\log d}{\sigma^2_{*,0} }, \quad \sup_{x\in\mathbb{R}^d} \sum_{j,k,l=1}^d \sup_{y \in R(0, \sigma_{*,0} \eta)}\left|\partial_{jkl} h(x+y)\right| \lesssim \frac{(\log d)^{3/2}}{\sigma^{3}_{*,0}},
\end{equation}
where $\eta:=2c/\sqrt{\log d}$. The second property, local stability of the derivative, is important to obtain good dependence on $\delta$. Since $h$ is infinite differentiable and has bounded derivates, we can freely interchange differentiation and integration below, without further announcement.  

Now, write 
\begin{equation}\label{eq: starting point qg}
\Pr(\tilde W \in A) - \Pr(\tilde Z \in A) = \Ep h (W) - \Ep h(Z).
\end{equation}
Also, define the Slepian interpolant
$$F(s):=\sqrt{1-s} F+\sqrt{s} Z,\quad s\in[0,1],$$
 for any random vector $F$ in $\mathbb{R}^d$.  Using fundamental theorem of calculus and integration by parts, write \eqref{eq: starting point qg} further as:
\begin{align}
\Ep h(W) -  \Ep h(Z)&=- \frac{1}{2}\int_0^1\Ep\left\langle \nabla h(W(s)),\frac{Z}{\sqrt s} - \frac{W}{\sqrt{1-s}} \right\rangle ds
\nonumber\\
&= - \frac{1}{2}\int_0^1\Ep\left[\langle\Sigma,\nabla^2 h(W(s))\rangle - \left\langle\frac{W}{\sqrt{1-s}},\nabla h(W(s))\right\rangle\right]ds.\label{slepian-ip}
\end{align}
To bound the integral here, we employ Stein's leave-one-out trick. 

First, we have 
\begin{equation*}
\Ep\left[\left\langle\frac{W}{\sqrt{1-s}},\nabla h(W(s))\right\rangle\right]
=\frac{1}{\sqrt{1-s}}\sum_{i=1}^n\sum_{j=1}^d\Ep[\xi_{ij}\partial_jh(W(s))].
\end{equation*}
Let $W^{(i)}:=W-\xi_i$ for $i=1,\dots,n$. Taylor expanding $\partial_jh(W(s))$ around $W^{(i)}(s)=W(s)-\sqrt{1-s}\xi_i$ for each $i$, we obtain
\begin{align*}
&\Ep\left[\left\langle\frac{W}{\sqrt{1-s}},\nabla h(W(s))\right\rangle\right]\\
&=\frac{1}{\sqrt{1-s}}\sum_{i=1}^n\sum_{j=1}^d\Ep[\xi_{ij}\partial_jh(W^{(i)}(s))]
+\sum_{i=1}^n\sum_{j,k=1}^d\Ep[\xi_{ij}\xi_{ik}\partial_{jk}h(W^{(i)}(s))]
+R_1(s),
\end{align*}
where
\[
R_1(s)=\sqrt{1-s}\sum_{i=1}^n\sum_{j,k,l=1}^d\Ep[(1-U)\xi_{ij}\xi_{ik}\xi_{il}\partial_{jkl}h(W^{(i)}(s)+\sqrt{1-s}U\xi_i)],
\]
and $U$ is a uniform random variable on $[0,1]$ independent of everything else. 
Using the independence between $W^{(i)}$ and $\xi_i$ as well as $\Ep[\xi_{ij}]=0$, we deduce
\begin{equation}\label{r1-loo}
\Ep\left[\left\langle\frac{W}{\sqrt{1-s}},\nabla h(W(s))\right\rangle\right]
=\sum_{i=1}^n\sum_{j,k=1}^d\Ep[\xi_{ij}\xi_{ik}]\Ep[\partial_{jk}h(W^{(i)}(s))]+R_1(s).
\end{equation}
Next, we decompose $\Ep\left[\langle\Sigma,\nabla^2 h(W(s))\rangle\right]$ as
\besn{\label{cov-loo}
&\Ep\left[\langle\Sigma,\nabla^2 h(W(s))\rangle\right]\\
&=\Ep\left[\langle\Sigma-\Sigma_W,\nabla^2 h(W(s))\rangle\right]
+\Ep\left[\langle\Sigma_W,\nabla^2 h(W(s))\rangle\right].
}
We have by the first inequality in \eqref{FKbounds-t2}
\begin{equation}\label{r0-loo}
\left|\Ep\left[\langle\Sigma-\Sigma_W,\nabla^2 h(W(s))\rangle\right]\right|
\lesssim\tilde\Delta_0.
\end{equation}
Meanwhile, we further decompose $\Ep\left[\langle\Sigma_W,\nabla^2 h(W(s))\rangle\right]$ in the following. 
Since $\Sigma_W=\sum_{i=1}^n\Ep[\xi_{i}\xi_{i}^T]$, we have
\begin{equation*}
\Ep\left[\langle\Sigma_W,\nabla^2 h(W(s))\rangle\right]
=\sum_{i=1}^n\sum_{j,k=1}^d\Ep[\xi_{ij}\xi_{ik}]\Ep\left[\partial_{jk}h(W(s))\right].
\end{equation*}
Taylor expanding $\partial_{jk}h(W(s))$ around $W^{(i)}(s)$, we obtain
\begin{equation}\label{r2-loo}
\Ep\left[\langle\Sigma_W,\nabla^2 h(W(s))\rangle\right]
=\sum_{i=1}^n\sum_{j,k=1}^d\Ep[\xi_{ij}\xi_{ik}]\Ep[\partial_{jk}h(W^{(i)}(s))]
+R_2(s),
\end{equation}
where
\[
R_2(s)=\sqrt{1-s}\sum_{i=1}^n\sum_{j,k,l=1}^d\Ep[\xi_{ij}\xi_{ik}]\Ep[\xi_{il}\partial_{jkl}h(W^{(i)}(s)+\sqrt{1-s}U\xi_i)].
\] 
From \eqref{eq: starting point qg}--\eqref{r2-loo}, we obtain
\begin{equation}\label{eq:loo}
|\Pr(\tilde W\in A)-\Pr(\tilde Z\in A)|
\lesssim\tilde\Delta_0+\int_0^1|R_1(s)|ds+\int_0^1|R_2(s)|ds.
\end{equation}
Therefore, we complete the proof once we show
\begin{equation}\label{remain-loo}
\int_0^1|R_1(s)|ds\lesssim\tilde\Delta_1\quad
\text{and}\quad
\int_0^1|R_2(s)|ds\lesssim\tilde\Delta_1.
\end{equation}

Since $\|\xi_i\|_\infty\leq\delta\leq\sigma_{*,0}\eta/2$ by assumption, we have
\begin{align*}
|R_{1}(s)|&\leq\sum_{i=1}^n\sum_{j,k,l=1}^d\Ep\left[|\xi_{ij}\xi_{ik}\xi_{il}|\sup_{y\in R(0,\sigma_{*,0}\eta/2)}|\partial_{jkl}h(W^{(i)}(s)+y)|\right]\\
&=\sum_{i=1}^n\sum_{j,k,l=1}^d\Ep\left[|\xi_{ij}\xi_{ik}\xi_{il}|\right]\Ep\left[\sup_{y\in R(0,\sigma_{*,0}\eta/2)}|\partial_{jkl}h(W^{(i)}(s)+y)|\right],
\end{align*}
where the last line follows from the independence between $W^{(i)}$ and $\xi_i$. 
Using $\|\xi_i\|_\infty\leq\sigma_{*,0}\eta/2$ again, we conclude
\begin{align*}
|R_{1}(s)|&\leq\sum_{i=1}^n\sum_{j,k,l=1}^d\Ep\left[|\xi_{ij}\xi_{ik}\xi_{il}|\right]\Ep\left[\sup_{y\in R(0,\sigma_{*,0}\eta)}|\partial_{jkl}h(W(s)+y)|\right]\\
&\leq\left(\max_{1\leq j\leq d}\sum_{i=1}^n\Ep[|\xi_{ij}|^3]\right)\sum_{j,k,l=1}^d\Ep\left[\sup_{y\in R(0,\sigma_{*,0}\eta)}|\partial_{jkl}h(W(s)+y)|\right].
\end{align*}
Therefore, we obtain by the second inequality in \eqref{FKbounds-t2}
\begin{equation}\label{r1-loo-est}
|R_{1}(s)|\lesssim\max_{1\leq j\leq d}\sum_{i=1}^n\Ep[|\xi_{ij}|^3]\frac{\log^{3/2} d}{\sigma_{*,0}^3}=\tilde\Delta_1.
\end{equation}
This yields the first inequality in \eqref{remain-loo}; since we can similarly prove the second one, the desired result follows from combining the bounds and noting that these bounds do not depend on $A$.
\end{proof}

\begin{proof}[Proof of Theorem \ref{t-QG2}]
The proof is a modification of the proof of Theorem \ref{t-QG}, inspired by \cite{Ga20}. 
Here, we describe the changes, keeping all unmentioned notations the same as those in the proof of Theorem \ref{t-QG}. 
In a nutshell, we only need to consider an additional Taylor expansion when bounding $R_1(s)$ and $R_2(s)$. 

Note that \eqref{eq:loo} holds under our current assumptions by the same arguments as those in the proof of Theorem \ref{t-QG}. Thus, we only need to bound
$$
\int_0^1|R_1(s)|ds\quad\text{and}\quad\int_0^1|R_2(s)|ds.
$$
To bound the former, we consider the Taylor expansion of $\partial_{jkl}h(W^{(i)}(s)+\sqrt{1-s}U\xi_i)$ around $W^{(i)}(s)$ and rewrite $R_1(s)$ as 
\[
R_1(s)=\sqrt{1-s}\sum_{i=1}^n\sum_{j,k,l=1}^d\Ep[(1-U)\xi_{ij}\xi_{ik}\xi_{il}\partial_{jkl}h(W^{(i)}(s))]
+R_1'(s),
\]
where
\[
R_1'(s)=(1-s)\sum_{i=1}^n\sum_{j,k,l,q=1}^d\Ep[U(1-U)\xi_{ij}\xi_{ik}\xi_{il}\xi_{iq}\partial_{jklq}h(W^{(i)}(s)+\sqrt{1-s}UU'\xi_i)],
\]
and $U'$ is a uniform random variable on $[0,1]$ independent of everything else. 
Since $W^{(i)},\xi_i$ and $U$ are independent, we obtain
\[
R_1(s)=\sqrt{1-s}\sum_{i=1}^n\sum_{j,k,l=1}^d\Ep[1-U]\Ep[\xi_{ij}\xi_{ik}\xi_{il}]\Ep[\partial_{jkl}h(W^{(i)}(s))]
+R_1'(s).
\]
Thus, we conclude $R_1(s)=R_1'(s)$ by the assumption \eqref{zero-skew}. 
Now, note that we have by Lemma \ref{lemma:fk2.2} with $\phi = \infty$ and $\epsilon=1$
\[
\sup_{x\in\mathbb{R}^d} \sum_{j,k,l,q=1}^d \sup_{y \in R(0, \sigma_{*,0} \eta)}\left|\partial_{jklq} h(x+y)\right| \lesssim \frac{(\log d)^{2}}{\sigma^{4}_{*,0}}.
\]
Using this inequality instead of the second one in \eqref{FKbounds-t2}, we can prove $|R_1'(s)|\lesssim\tilde\Delta_2$ analogously to the proof of \eqref{r1-loo-est}. A similar argument also yields $|R_2'(s)|\lesssim\tilde\Delta_2$, completing the proof. 
\end{proof}

\begin{proof}[Proof of Proposition \ref{prop:lower-bound2}]
The proof is almost the same as that of \cite[Proposition 1.1]{FK20}, except that we use Theorem 3 in \cite[Chapter VIII]{Petrov75} instead of Eq.(2.41) in \cite[Chapter VIII]{Petrov75}. 

It suffices to show that there is a sequence $(x_n)_{n=1}^\infty$ of real numbers such that
\[
\rho:=\liminf_{n\to\infty}\frac{n}{\log^{2}d}\left|\Pr\left(\max_{1\leq j\leq d}W_{n,j}\leq x_n\right)-\Pr\left(\max_{1\leq j\leq d}Z_{j}\leq x_n\right)\right|>0,
\]
where
\[
W_{n,j}:=\frac{1}{\sqrt n}\sum_{i=1}^nX_{n,ij}.
\]

We define the sequence $(x_n)_{n=1}^\infty$ in the same way as in the proof of Proposition \ref{prop:lower-bound}. 
Then we can prove \eqref{est:normal-tail} and \eqref{eq:chen-stein} by the same arguments as in the proof of Proposition \ref{prop:lower-bound}. 
Moreover, since $x_n=O(\sqrt{\log d})=o(n^{1/4})$ by assumption, Theorem 3 in \cite[Chapter VIII]{Petrov75} implies
\begin{align*}
\frac{P\left( W_1>x_n\right)}{1-\Phi_1(x_n)}&=e^{\frac{\gamma}{24n}x_n^4}\left\{1+O\left(\frac{x_n+1}{\sqrt{n}}\right)\right\}.
\end{align*}
Combining this with \eqref{est:normal-tail}, we obtain
\begin{align*}
\lambda_n&=\frac{P\left( W_1>x_n\right)}{1-\Phi_1(x_n)}\cdot d(1-\Phi_1(x_n))\\
&=e^{\frac{\gamma}{24n}x_n^4}\left\{1+O\left(\frac{x_n+1}{\sqrt{n}}\right)\right\}+O\left(\frac{1}{d}\right).
\end{align*}
Using the Maclaurin expansion of the exponential function, we obtain
\[
\lambda_n=1+\frac{\gamma}{24n}x_n^4+o\left(\frac{x_n^4}{n}\right)+O\left(\frac{x_n+1}{\sqrt{n}}\right)+O\left(\frac{1}{d}\right)
\]
and thus
\[
e^{-\lambda_n+1}=1-\frac{\gamma}{24n}x_n^4+o\left(\frac{x_n^4}{n}\right)+O\left(\frac{x_n+1}{\sqrt{n}}\right)+O\left(\frac{1}{d}\right).
\]
Note that we particularly have $\lambda_n=O(1)$. Thus, \eqref{eq:chen-stein} yields
\ba{
\rho\geq\liminf_{n\to\infty}\frac{n}{\log^{2}d}|e^{-\lambda_n}-e^{-1}|
=\liminf_{n\to\infty}e^{-1}\frac{n}{\log^{2}d}\frac{|\gamma| x_n^4}{24}
=e^{-1}\frac{|\gamma|}{6}
}
because $x_n/\sqrt{2\log d}\to1$ as well as $d^{-1}=o((\log d)^{-1})=o(n^{-1}\log^2d)$ and $n/\log^3d\to0$ by assumption. 
This completes the proof. 
\end{proof}

\section{Proofs for Section \ref{sec: smoothing inequalities}}\label{sec: proofs smoothing inequalities}
Throughout this section, for brevity of notations, we often drop super-indices $\phi$, $\epsilon$, $A$, and $\Sigma$ in the functions $g^{\phi}(\cdot)$, $m^{A,\phi}(\cdot)$,
and $\rho^{A,\phi,\epsilon,\Sigma}(\cdot)$ and simply write $g(\cdot)$,
$m(\cdot)$, and $\rho(\cdot)$ instead. Also, we use $\lesssim$ to denote inequalities that hold up to a universal constant. 

We also introduce some additional notations used throughout this section. 
Let $\varphi\colon\mathbb{R}^{d}\to\mathbb{R}$ denote the pdf
of the standard normal distribution on $\mathbb{R}^{d}$. In addition,
let $\varphi_{1}\colon\mathbb{R}\to\mathbb{R}$ and $\Phi_{1}\colon\mathbb{R}\to\mathbb{R}$
denote the pdf and the cdf of the standard normal distribution on
$\mathbb{R}$.
Moreover, for an integer $\nu\geq0$, the $\nu$-th Hermite polynomial is denoted by $H_\nu$: $H_\nu(t)=(-1)^\nu\varphi_1(t)^{-1}\varphi_1^{(\nu)}(t)$. 
We denote by $t_\nu$ the maximum root of $H_\nu$ when $\nu\geq1$. For example, $t_1=0,t_2=1,t_3=\sqrt{3}$. It is evident that $H_\nu$ is positive and strictly increasing on $(t_\nu,\infty)$. 
We also have 
\ben{\label{h-root}
t_1<t_2<\cdots;
} 
see e.g.~\cite[Theorem 3.3.2]{Sz39}. 
When $\nu\geq1$, we define the function $h_\nu$ on $\mathbb{R}$ by $h_\nu(t)=H_{\nu-1}(t)\varphi_1(t),~t\in\mathbb{R}$. 
In addition, set $M_{\nu}:=\max_{0\leq t\leq t_\nu}|H_{\nu-1}(t)|<\infty$ and define the function $\tilde h_\nu:[0,\infty)\to(0,\infty)$ by
\[
\tilde h_\nu(t)=M_{\nu}\varphi_1(t)1_{[0,t_\nu]}(t)+h_{\nu}(t)1_{(t_\nu,\infty)}(t),\qquad t\in[0,\infty).
\]
A simple computation shows $h_\nu'(t)=-h_{\nu+1}(t)$; hence $h_\nu$ is strictly decreasing on $[t_\nu,\infty)$. Moreover, since $h_\nu$ is either even or odd, we have $|h_\nu(-t)|=|h_\nu(t)|$ for all $t\in\mathbb{R}$. These facts imply the following properties of $\tilde h_\nu$:
\begin{align}
&\tilde h_\nu \text{ is decreasing on }[0,\infty).\label{h-decrease}\\
&|h_\nu(t)|\leq\tilde h_\nu(|t|)\text{ for all }t\in\mathbb R.\label{h-bound}
\end{align}
Finally, for every $u\in\{1,\dots,v\}$, we set 
\ba{
\mathcal{N}^u(v)&:=\{(\nu_1,\dots,\nu_u)\in\mathbb{Z}^u:\nu_1,\dots,\nu_u\geq1,\nu_1+\cdots+\nu_u=v\},\\
\mathcal{J}^u(d)&:=\{(j_1,\dots,j_u)\in\{1,\dots,d\}^u:j_1,\dots,j_u\text{ are mutually different}\}.
}

\begin{proof}[Proof of Lemma \ref{lem: smoothing inequality mixed 1}]
First, note that the asserted claim for general $\epsilon$ follows from the asserted claim for $\epsilon=1$. Indeed, since $m^{A,\phi}(\epsilon w)=m^{\epsilon^{-1}A,\epsilon\phi}(w)$ by definition, we have
\ben{\label{rho-epsilon}
\rho^{A,\phi,\epsilon,\Sigma}(w)
=\Ep m^{A,\phi}(\epsilon(w/\epsilon + Z))
=\rho^{\epsilon^{-1}A,\epsilon\phi,1,\Sigma}(w/\epsilon).
}
Hence
\ba{
&\sup_{A\in\mathcal R}\sup_{w\in\mathbb R^d}\sum_{j_{1},\dots,j_{v}=1}^{d}\sup_{y\in R(0,\epsilon\sigma_*\eta)}|\partial_{j_{1},\dots,j_{v}}\rho^{A,\phi,\epsilon,\Sigma}(w+y)| \\
&\qquad =\frac{1}{\epsilon^v}\sup_{A\in\mathcal R}\sup_{w\in\mathbb R^d}\sum_{j_{1},\dots,j_{v}=1}^{d}\sup_{y\in R(0,\epsilon\sigma_*\eta)}|\partial_{j_{1},\dots,j_{v}}\rho^{\epsilon^{-1}A,\epsilon\phi,1,\Sigma}((w+y)/\epsilon)| \\
&\qquad =\frac{1}{\epsilon^v}\sup_{A\in\mathcal R}\sup_{w\in\mathbb R^d}\sum_{j_{1},\dots,j_{v}=1}^{d}\sup_{y\in R(0,\sigma_*\eta)}|\partial_{j_{1},\dots,j_{v}}\rho^{A,\epsilon\phi,1,\Sigma}(w+y)|.
}
Similarly, the asserted claim for general $\Sigma$ follows from the asserted claim for $\Sigma=I_d$. Indeed, define $\Sigma^1 = \Sigma - \sigma_*^2 I_d$ and $\Sigma^2 = \sigma_*^2 I_d$ and let $Z^1$ and $Z^2$ be independent random vectors in $\mathbb R^d$ such that $Z^1\sim N(0,\Sigma^1)$ and $Z^2\sim N(0,\Sigma^2)$. Then $Z\sim N(0,\Sigma)$ is equal in distribution to $Z^1 + Z^2$. Hence,
$$
\rho^{A,\phi,\epsilon,\Sigma}(w)=\Ep m^{A,\phi}(w+\epsilon Z^1 + \epsilon Z^2) = \Ep \rho^{A,\phi,\epsilon\sigma_*,I_d}(w+\epsilon Z^1),
$$
and so, by Jensen's inequality,
\begin{align*}
&\sup_{w\in\mathbb R^d}\sum_{j_{1},\dots,j_{v}=1}^{d}\sup_{y\in R(0,\epsilon\sigma_*\eta)}|\partial_{j_{1},\dots,j_{v}}\rho^{A,\phi,\epsilon,\Sigma}(w+y)| \\
&\qquad \leq \Ep\left[\sup_{w\in\mathbb R^d}\sum_{j_{1},\dots,j_{v}=1}^{d}\sup_{y\in R(0,\epsilon\sigma_*\eta)}|\partial_{j_{1},\dots,j_{v}}\rho^{A,\phi,\epsilon\sigma_*,I_d}(w + \epsilon Z^1 +y)|\right]\\
&\qquad = \sup_{w\in\mathbb R^d}\sum_{j_{1},\dots,j_{v}=1}^{d}\sup_{y\in R(0,\epsilon\sigma_*\eta)}|\partial_{j_{1},\dots,j_{v}}\rho^{A,\phi,\epsilon\sigma_*,I_d}(w +y)|.
\end{align*}
Therefore, in what follows, we set $\epsilon=1$ and $\Sigma=I_d$. 

Next, we prepare some notation. Let
\[
\mathbb{R}_{-}^{d}=\{w\in\mathbb{R}^{d}\colon w\leq0\}.
\]
Then we set $\tilde m=\tilde m^\phi:=m^{\mathbb R_-^d,\phi}$ and $\tilde\rho=\tilde\rho^{\phi}:=\rho^{\mathbb R_-^d,\phi,1,I_d}$. That is,
\ba{
\tilde m(w)=g^\phi\left(\max_{1\leq j\leq d}w_j\right)\quad
\text{and}\quad
\tilde\rho(w)=\Ep\tilde m(w+Z)\quad
\text{for }w\in\mathbb R^d. 
}
Also, for all $j=1,\dots,d$, let $\pi_{j}\colon\mathbb{R}^{d}\to\mathbb{R}$
be the function defined by
\[
\pi_{j}(w)=1\left\{ j=\arg\max_{1\leq k\leq d}w_{k}\right\} ,\quad w\in\mathbb{R}^{d},
\]
where $\arg\max_{1\leq k\leq d}w_{k}$ is equal to the smallest $l=1,\dots,d$
such that $w_{l}=\max_{1\leq k\leq d}w_{k}$. Here, it is useful to
note that the functions $\pi_{j}(\cdot)$ satisfy
\begin{equation}
\sum_{j=1}^{d}\pi_{j}(w)=1\label{eq: pi sum}
\end{equation}
for all $w\in\mathbb{R}^{d}$. Finally, let $\Psi\colon\mathbb{R}^{d}\to\mathbb{R}$
be the function defined by
\[
\Psi(w)=\int_{-\infty}^{0}\prod_{j=1}^{d}\Phi_{1}(t-w_{j})dt,\quad w\in\mathbb{R}^{d}.
\]
It is straightforward to check that
\begin{equation}
\partial_{j_{1},\dots, j_{u}}\Psi(w)=(-1)^{u}\int_{\mathbb{R}_{-}^{d}}\pi_{j_{1}}(s)\partial_{j_{2},\dots,j_{u}}\varphi\left(s-w\right)ds\label{eq: Phi reduction}
\end{equation}
for all $u=1,\dots,v$, $j_{1},\dots,j_{u}=1,\dots,d$, and $w\in\mathbb{R}^{d}$. Indeed, for $u=1$, we have
\begin{align*}
\partial_{j_1}\Psi(w) 
&= -\int_{-\infty}^0\varphi_1(t-w_{j_1})\prod_{j:j\neq j_1}\Phi_1(t - w_j)dt\\
&=-\int_{-\infty}^0\varphi_1(t-w_{j_1})\left(\prod_{j:j\neq j_1}\int_{-\infty}^0 \varphi_1(s_j - w_j)ds_j \right)dt
 =-\int_{\mathbb R_{-}^{d}}\pi_{j_1}(s)\varphi(s-w)ds,
\end{align*}
yielding \eqref{eq: Phi reduction}, and for $u\geq 2$, \eqref{eq: Phi reduction} follows immediately from the case $u=1$.

For the rest of the proof, we proceed in three steps. 
In the first step, we prove that
\begin{align}
&\sup_{A\in\mathcal R}\sup_{w\in\mathbb{R}^{d}}\sum_{j_1,\dots,j_v=1}^d\sup_{y\in R(0,\eta)}|\partial_{j_1,\dots,j_v}\rho(w+y)|\nonumber\\
&\qquad\lesssim\phi+\sum_{u=1}^v(\log d)^{(v-u)/2}\sup_{w\in\mathbb{R}^{d}}\sum_{(j_{1},\dots,j_{u})\in\mathcal{J}^u(d)}\sup_{y\in R(0,\eta)}|\partial_{j_{1}\dots j_{u}}\tilde\rho(w+y)|.\label{eq: Step 0}
\end{align}
In the second step, we prove that
\begin{align}
&\sup_{w\in\mathbb{R}^{d}}\sum_{j_1,\dots,j_v=1}^d\sup_{y\in R(0,\eta)}|\partial_{j_{1}\dots j_{v}}\tilde\rho(w+y)|
\nonumber\\
&\qquad\leq2\phi\sup_{w\in\mathbb{R}^{d}}\sum_{j_1,\dots,j_v=1}^d\sup_{y\in R(0,\eta)}|\partial_{j_{1}\dots j_{v}}\Psi(w+y)|.\label{eq: Step 1}
\end{align}
In the third step, we prove that
\begin{equation}
\sup_{w\in\mathbb{R}^{d}}\sum_{j_{1},\dots,j_{v}=1}^d\sup_{y\in R(0,\eta)}|\partial_{j_{1}\dots j_{v}}\Psi(w+y)|\lesssim (\log d)^{(v-1)/2}.\label{eq: Step 2}
\end{equation}
Combining these steps, with $u$ replacing $v$ in \eqref{eq: Step 1} and \eqref{eq: Step 2}, gives the asserted claim of the lemma. 

We will use the following elementary identity in the first step. 
\begin{lemma}\label{m-id}
For any random vector $W$ in $\mathbb{R}^d$ and $A=\prod_{j=1}^d(a_j,b_j]\in\mathcal R$, we have
\ba{
\Ep m(W)=\Ep m^{A,\phi}(W)=\phi\int_0^{\phi^{-1}}\Pr(W\in A^s)ds.
}
\end{lemma}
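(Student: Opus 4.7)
The plan is a short layer-cake plus Fubini argument, with one minor bookkeeping point at the end.

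First I would introduce the signed $\ell_\infty$-distance to $A$, namely $\tau(w) := \max_{1\leq j\leq d}[(w_j - b_j)\vee(a_j - w_j)]$, so that by the definition of $m^{A,\phi}$ in \eqref{eq: m function} we have $m^{A,\phi}(w) = g^\phi(\tau(w))$. Next I would verify the pointwise layer-cake identity
\[
g^\phi(t) \;=\; \phi\int_0^{1/\phi}\mathbf{1}\{t\leq s\}\,ds, \qquad t\in\mathbb{R},
\]
by inspecting the three regimes $t\leq 0$, $0<t<1/\phi$, $t\geq 1/\phi$ in the piecewise-linear definition of $g^\phi$: the right-hand side evaluates to $1/\phi$, $1/\phi - t$, and $0$ respectively after multiplying by $\phi$, matching $g^\phi(t)$ in each case.

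Substituting $t = \tau(W)$, taking expectation, and swapping the expectation with the $ds$-integral by Fubini (justified because $m^{A,\phi}\in[0,1]$), I would get
\[
\Ep m^{A,\phi}(W) \;=\; \phi\int_0^{1/\phi} \Pr\bigl(\tau(W)\leq s\bigr)\,ds.
\]
The last step is to identify the integrand with $\Pr(W\in A^s)$. Unwinding the definitions, $\{\tau(w)\leq s\}$ and $A^s$ describe the same rectangle except for strict-versus-non-strict inequalities on the lower faces; the two probabilities therefore differ only at values of $s$ lying in the at-most-countable set of atoms of the real-valued random variable $\max_j(a_j - W_j)$, which is Lebesgue-null, so the $ds$-integrals coincide and the claim follows.

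There is no genuine obstacle here; the whole argument is essentially layer-cake for a tent function combined with Fubini, and the only point requiring care is the strict-vs-non-strict boundary issue in the last step, which dissolves after integrating in $s$.
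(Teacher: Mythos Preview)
Your proof is correct and follows essentially the same route as the paper's: represent $g^\phi(t)$ as $\phi\int_0^{1/\phi}\mathbf{1}\{t\leq s\}\,ds$ (the paper obtains this via $g(t)=-\int_t^\infty g'(s)\,ds$ and then Fubini, you verify it directly by cases), take expectation, swap, and identify $\{\tau(W)\leq s\}$ with $\{W\in A^s\}$. Your treatment of the strict-versus-non-strict endpoint discrepancy is in fact more careful than the paper's, which silently equates the two events.
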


\begin{proof}
For any random variable $\tau$, we have, with $\Pr^\tau$ being the law of $\tau$,
\ba{
\Ep [g(\tau)]&=\int_{\mathbb{R}} g(t)\Pr^\tau(dt)
=-\int_{\mathbb{R}}\left(\int_t^{\infty} g'(s)ds\right)\Pr^\tau(dt)\\
&=-\int_{-\infty}^\infty\left(\int_{\mathbb{R}}1_{(-\infty,s]}(t)\Pr^\tau(dt)\right)g'(s)ds\\
&=-\int_{-\infty}^\infty\Pr(\tau\leq s)g'(s)ds
=\phi\int_0^{\phi^{-1}}\Pr(\tau\leq s)ds.
}
Applying this identity with $\tau=\max_{1\leq j\leq d}[(W_j-b_j)\vee[(a_j-W_j)]$, we obtain 
\ba{
\Ep[m(W)]=\phi\int_0^{\phi^{-1}}\Pr\left(\max_{1\leq j\leq d}[(W_j-b_j)\vee[(a_j-W_j)]\leq s\right)ds
=\phi\int_0^{\phi^{-1}}\Pr(W\in A^s)ds.
}
This completes the proof. 
\end{proof}

\medskip
\textbf{Step 1}. Here, we prove (\ref{eq: Step 0}). 
First, note that 
\ba{
&\sup_{A\in\mathcal R}\sup_{w\in\mathbb{R}^{d}}\sum_{j_{1},\dots,j_{v}=1}^{d}\sup_{y\in R(0,\eta)}|\partial_{j_{1},\dots,j_{v}}\rho(w+y)|\\
&\qquad\lesssim \sum_{u=1}^v\sum_{(\nu_1,\dots,\nu_u)\in\mathcal{N}^u(v)}\sup_{A\in\mathcal R}\sup_{w\in\mathbb{R}^{d}}\sum_{(j_1,\dots,j_u)\in\mathcal{J}^u(d)}\sup_{y\in R(0,\eta)}|\partial_{j_{1}}^{\nu_1}\cdots\partial_{j_{u}}^{\nu_u}\rho(w+y)|.
}
For each $u=1,\dots,v$, the cardinality of the set $\mathcal{N}^u(v)$ is bounded by a constant depending only on $v$. Therefore, it suffices to show that
\ban{
&\sup_{A\in\mathcal R}\sup_{w\in\mathbb{R}^{d}}\sum_{(j_1,\dots,j_u)\in\mathcal{J}^u(d)}\sup_{y\in R(0,\eta)}|\partial_{j_{1}}^{\nu_1}\cdots\partial_{j_{u}}^{\nu_u}\rho(w+y)|
\nonumber\\
&\qquad\lesssim\phi+(\log d)^{(v-u)/2}\sup_{w\in\mathbb{R}^{d}}\sum_{(j_{1},\dots,j_{u})\in\mathcal{J}^u(d)}\sup_{y\in R(0,\eta)}|\partial_{j_{1}\dots j_{u}}\tilde\rho(w+y)|\label{step0:aim}
}
for any (fixed) $u\in\{1,\dots,v\}$ and $(\nu_1,\dots,\nu_u)\in\mathcal{N}^u(v)$.

Let $A=\prod_{j=1}^d(a_j,b_j]\in\mathcal R$ be fixed. Using Lemma \ref{m-id}, we can rewrite $\rho(w)$ as 
\ba{
\rho(w)&=\phi\int_0^{\phi^{-1}}\Pr(w+Z\in A^s)ds
=\phi\int_0^{\phi^{-1}}\left\{\int_{\mathbb R^d}1_{A^s}(w+z)\varphi(z)dz\right\}ds\\
&=\phi\int_0^{\phi^{-1}}\left\{\int_{A^s}\varphi(z-w)dz\right\}ds.
}
Thus we have
\ben{\label{rho-deriv}
\partial_{j_1}^{\nu_1}\cdots\partial_{j_u}^{\nu_u}\rho(w)=(-1)^v\phi\int_0^{\phi^{-1}}\left\{\int_{A^s}\partial_{j_1}^{\nu_1}\cdots\partial_{j_u}^{\nu_u}\varphi(z-w)dz\right\}ds.
} 
Next, we have for any $w\in \mathbb R^d$ and $s\in[0,\phi^{-1}]$ 
\bm{
\left|\int_{A^s}\partial_{j_1}^{\nu_1}\cdots\partial_{j_u}^{\nu_u}\varphi(z-w)dz\right|\\
=\left(\prod_{q=1}^u\left|h_{\nu_q}(b^w_{j_q}+s)-h_{\nu_q}(a^w_{j_q}-s)\right|\right)
\prod_{k:k\neq j_1,\dots,j_u}\left\{\Phi_1(b^w_k+s)-\Phi_1(a^w_k-s)\right\},
}
where $a^w_j:=a_j-w_j$ and $b^w_j:=b_j-w_j$. 
Then, by \eqref{h-bound},
\bm{
\left|\int_{A^s}\partial_{j_1}^{\nu_1}\cdots\partial_{j_u}^{\nu_u}\varphi(z-w)dz\right|
\leq\left(\prod_{q=1}^u\left(\tilde h_{\nu_q}(|b^w_{j_q}+s|)+\tilde h_{\nu_q}(|-a^w_{j_q}+s|)\right)\right)\\
\times\prod_{k:k\neq j_1,\dots,j_u}\left\{\Phi_1(b^w_k+s)+\Phi_1(-a^w_k+s)-1\right\},
}
where we also use the identity $1-\Phi_1(t)=\Phi_1(-t)$. 
Now we set
\[
r^w_j:=b^w_j\wedge(-a^w_j),\qquad j=1,\dots,d.
\] 
Then, for any $j$, 
\ben{\label{Phi-bound}
\Phi_1(b^w_j+s)+\Phi_1(-a^w_j+s)-1
\leq\min\{\Phi_1(b^w_j+s),\Phi_1(-a^w_j+s)\}
=\Phi_1(r^w_j+s).
}
Also, 
\ben{\label{ba-small}
|b^w_j+s|\wedge|-a^w_j+s|\geq|r^w_j+s|.
}
In fact, if $b^w_j\geq0$ and $a^w_j\leq0$, then $|b^w_j+s|=b^w_j+s$, $|-a^w_j+s|=-a^w_j+s$ and $|r^w_j+s|=b^w_j\wedge(-a^w_j)+s$, so \eqref{ba-small} is evident. Otherwise, we have $b^w_j<0$ or $a^w_j>0$. In the first case, we have $-a^w_j>-b^w_j>0$, so $r^w_j=b^w_j$ and
\ba{
|-a^w_j+s|
=-a^w_j+s
\geq\max\{b^w_j+s,-b^w_j-s\}
=|b^w_j+s|.
}
Hence \eqref{ba-small} holds true. 
We can similarly prove \eqref{ba-small} in the second case. Combining \eqref{Phi-bound} and \eqref{ba-small} with \eqref{h-decrease}, we obtain
\ban{
&\left|\int_{A^s}\partial_{j_1}^{\nu_1}\cdots\partial_{j_u}^{\nu_u}\varphi(z-w)dz\right|
\nonumber\\
&\qquad\leq2^u\left(\prod_{q=1}^u\tilde h_{\nu_q}(|r^w_{j_q}+s|)\right)\prod_{k:k\neq j_1,\dots,j_u}\Phi_1(r^w_k+s)
\nonumber\\
&\qquad\lesssim \left(\prod_{q=1}^u\{1+|r^w_{j_q}+s|^{\nu_q-1}\}\varphi_1(r^w_{j_q}+s)\right)\prod_{k:k\neq j_1,\dots,j_u}\Phi_1(r^w_k+s).\label{phi-deriv-bound}
}
Let
\[
\mathbb S:=\left\{s\in[0,\phi^{-1}]:\max_{1\leq q\leq u}|r^w_{j_q}+s|\leq\sqrt{8v^2\log d}\right\},\qquad
\mathbb S^c:=[0,\phi^{-1}]\setminus\mathbb S.
\]
Then, we have by \eqref{phi-deriv-bound}
\ba{
&\int_{\mathbb S}\left|\int_{A^s}\partial_{j_1}^{\nu_1}\cdots\partial_{j_u}^{\nu_u}\varphi(z-w)dz\right|ds\\
&\qquad\lesssim(\log d)^{(v-u)/2}\int_0^{\phi^{-1}}\left(\prod_{q=1}^u\varphi_1(r^w_{j_q}+s)\right)\prod_{k:k\neq j_1,\dots,j_u}\Phi_1(r^w_k+s)ds
}
and
\ba{
\int_{\mathbb S^c}\left|\int_{A^s}\partial_{j_1}^{\nu_1}\cdots\partial_{j_u}^{\nu_u}\varphi(z-w)dz\right|ds
&\lesssim\int_{\mathbb S^c}\exp\left(-\frac{1}{4}\max_{1\leq q\leq u}(r^w_{j_q}+s)^2\right)ds\\
&\leq d^{-v}\int_{\mathbb S^c}\exp\left(-\frac{1}{8}\max_{1\leq q\leq u}(r^w_{j_q}+s)^2\right)ds\\
&\leq d^{-v}\int_{\mathbb S^c}\exp\left(-\frac{1}{8}(r^w_{j_1}+s)^2\right)ds\lesssim d^{-v}.
}
Combining these bounds with \eqref{rho-deriv}, we obtain for any $w\in\mathbb R^d$
\ba{
&\sup_{y\in R(0,\eta)}|\partial_{j_1}^{\nu_1}\cdots\partial_{j_u}^{\nu_u}\rho(w+y)|\\
&\lesssim (\log d)^{(v-u)/2}\sup_{y\in R(0,\eta)}\phi\int_0^{\phi^{-1}}\left(\prod_{q=1}^u\varphi_1(r^{w+y}_{j_q}+s)\right)\prod_{k:k\neq j_1,\dots,j_u}\Phi_1(r^{w+y}_k+s)ds
+\phi d^{-v}.
}
Now, since $|r^{w+y}_j-r^w_j|\leq|y_j|\leq\eta$ for all $j=1,\dots,d$ and $y\in R(0,\eta)$, we deduce
\ba{
&\sup_{y\in R(0,\eta)}|\partial_{j_1}^{\nu_1}\cdots\partial_{j_u}^{\nu_u}\rho(w+y)|\\
&\lesssim (\log d)^{(v-u)/2}\sup_{y\in R(0,\eta)}\phi\int_0^{\phi^{-1}}\left(\prod_{q=1}^u\varphi_1(r^{w}_{j_q}-y_{j_q}+s)\right)\prod_{k:k\neq j_1,\dots,j_u}\Phi_1(r^{w}_k-y_k+s)ds
+\phi d^{-v}\\
&=(\log d)^{(v-u)/2}\sup_{y\in R(0,\eta)}\phi\int_0^{\phi^{-1}}\left\{\int_{(\mathbb R_-^d)^s}\partial_{j_1\dots j_u}\varphi(z+r^w-y)dz\right\}ds
+\phi d^{-v},
}
where $r^w:=(r^w_1,\dots,r^w_d)^T$. Hence we conclude by \eqref{rho-deriv}
\be{
\sup_{y\in R(0,\eta)}|\partial_{j_1}^{\nu_1}\cdots\partial_{j_u}^{\nu_u}\rho(w+y)|
\lesssim (\log d)^{(v-u)/2}\sup_{y\in R(0,\eta)}|\partial_{j_1\dots j_u}\tilde\rho(-r^w+y)|+\phi d^{-v}.
}
This gives \eqref{step0:aim} and hence the asserted claim of this step.

\medskip
\textbf{Step 2}. Here, we prove (\ref{eq: Step 1}). Fix
any $w\in\mathbb{R}^{d}$ and $j_{1},\dots,j_{v}=1,\dots,d$, and
observe that
\[
\partial_{j_{1}}\tilde\rho(w)=\int_{\mathbb{R}^{d}}\partial_{j_{1}}\tilde m(w+z)\varphi(z)dz=\int_{\mathbb{R}^{d}}\partial_{j_{1}}\tilde m(s)\varphi\left(s-w\right)ds,
\]
where the second equality holds by the change of variables $z\mapsto s=w+ z$.
Therefore,
\begin{align*}
\partial_{j_{1},\dots,j_{v}}\tilde \rho(w) & =(-1)^{v-1}\int_{\mathbb{R}^{d}}\partial_{j_1}\tilde m(s)\partial_{j_{2},\dots,j_{v}}\varphi\left(s-w\right)ds\\
 & =(-1)^{v-1}\int_{\mathbb{R}^{d}}\partial_{j_1}\tilde m(w+z)\partial_{j_{2},\dots,j_{v}}\varphi(z)dz,
\end{align*}
where the second equality holds by the reverse change of variables
$s\mapsto z=s-w$. In addition,
\[
\partial_{j_1}\tilde m(w+z)=g'\left(\max_{1\leq j\leq d}(w_{j}+z_{j})\right)\pi_{j_{1}}(w+z)
\]
for almost all $z$ with respect to the Lebesgue measure on $\mathbb{R}^{d}$. Thus, given that
\[
g'(t)=\begin{cases}
\phi & \text{if }t\in(0,1/\phi),\\
0 & \text{if }t\notin(0,1/\phi),
\end{cases}
\]
denoting
\[
A^w_{1}=\left\{ z\in\mathbb{R}^{d}\colon w+ z\leq0\right\} ,\quad A^w_{2}=\left\{ z\in\mathbb{R}^{d}\colon w+ z\leq1/\phi\right\} ,
\]
we have
\[
\partial_{j_{1},\dots,j_{v}}\tilde\rho(w)=(-1)^{v-1}\phi\int_{A^w_{2}\setminus A^w_{1}}\pi_{j_{1}}(w+z)\partial_{j_{2},\dots,j_{v}}\varphi(z)dz,
\]
and so
\begin{align*}
|\partial_{j_{1},\dots,j_{v}}\tilde\rho(w)| & \leq\phi\left|\int_{A^w_{2}}\pi_{j_{1}}(w+ z-1/\phi)\partial_{j_{2},\dots,j_{v}}\varphi(z)dz\right|\\
 & \quad+\phi\left|\int_{A^w_{1}}\pi_{j_{1}}(w+ z)\partial_{j_{2},\dots,j_{v}}\varphi(z)dz\right|,
\end{align*}
where we used $\pi_{j_{1}}(w+ z)=\pi_{j_{1}}(w+ z-1/\phi)$. Therefore,
\begin{align}
&\sup_{w\in\mathbb{R}^{d}}\sum_{j_{1},\dots,j_{v}=1}^d\sup_{y\in R(0,\eta)}|\partial_{j_{1}\dots j_{v}}\tilde\rho(w+y)|\nonumber\\
&\quad \leq2\phi\sup_{w\in\mathbb{R}^{d}}\sum_{j_{1},\dots,j_{v}=1}^d\sup_{y\in R(0,\eta)}\left|\int_{A^{w+y}_{1}}\pi_{j_{1}}(w+y+z)\partial_{j_{2},\dots,j_{v}}\varphi(z)dz\right|.\label{eq: step 1 ineq 1}
\end{align}
Moreover, 
\begin{align}
&\int_{A^{w+y}_{1}}\pi_{j_{1}}(w+y+ z)\partial_{j_{2},\dots,j_{v}}\varphi(z)dz 
\nonumber\\
& \qquad=\int_{\mathbb{R}_{-}^{d}}\pi_{j_{1}}( s)\partial_{j_{2},\dots,j_{v}}\varphi(s-(w+y))ds\nonumber\\
 & \qquad =(-1)^{v}\partial_{j_{1},\dots,j_{v}}\Psi(w+y),\label{eq: step 1 ineq 2}
\end{align}
where the first equality holds by the change of variables $z\mapsto s=z+(w+y)$
and the third by (\ref{eq: Phi reduction}). Combining \eqref{eq: step 1 ineq 1} and \eqref{eq: step 1 ineq 2} gives the asserted claim of this step. 

\medskip
\textbf{Step 3}. Here, we prove (\ref{eq: Step 2}). To
do so, we proceed by induction on $v$. For $v=1$, we have for all $w\in\mathbb R^d$, $y\in R(0,\eta)$, and $j=1,\dots,d$ that
$$
|\partial_j\Psi(w+y)|=\int_{-\infty}^0 \left(\prod_{l\colon l\neq j}\Phi_1(t-w_l-y_l)\right)\varphi_1(t-w_j-y_j)dt.
$$
To bound the integral on the right-hand side here, consider the partition
$$
(-\infty,0] = \mathbb T_j\cup \mathbb T_j^c,
$$
where
$$
\mathbb T_j = \Big\{t\in(-\infty,0]\colon |t-w_j|\leq (2\log d)^{1/2}+\eta\Big\},\quad \mathbb T_j^c = (-\infty,0]\setminus \mathbb T_j.
$$
Then
\begin{align*}
&\int_{\mathbb T_j} \left(\prod_{l\colon l\neq j}\Phi_1(t-w_l-y_l)\right)\varphi_1(t-w_j-y_j)dt\\
&\qquad \leq \int_{\mathbb T_j} \left(\prod_{l\colon l\neq j}\Phi_1(t-w_l+\eta)\right)\varphi_1(t-w_j+\eta)\frac{\varphi_1(t-w_j-y_j)}{\varphi_1(t-w_j+\eta)}dt\\
&\qquad \lesssim \int_{\mathbb T_j} \left(\prod_{l\colon l\neq j}\Phi_1(t-w_l+\eta)\right)\varphi_1(t-w_j+\eta)dt\\
&\qquad \leq \int_{-\infty}^0 \left(\prod_{l\colon l\neq j}\Phi_1(t-w_l+\eta)\right)\varphi_1(t-w_j+\eta)dt = - \partial_j\Psi(w-\eta),
\end{align*}
where the second inequality holds for all $y\in R(0,\eta)$ because $\eta\leq K/\sqrt{\log d}$. Also,
\begin{align*}
&\int_{\mathbb T_j^c} \left(\prod_{l\colon l\neq j}\Phi_1(t-w_l-y_l)\right)\varphi_1(t-w_j-y_j)dt\\
&\qquad \leq \int_{\mathbb T_j^c}\varphi_1(t-w_j-y_j)dt \lesssim \int_{(2\log d)^{1/2}}^{+\infty}\varphi_1(t)dt \lesssim \varphi_1\left( (2\log d)^{1/2} \right)\lesssim d^{-1}.
\end{align*}
Combining these bounds, we obtain
\begin{align*}
\sum_{j=1}^d\sup_{y\in R(0,\eta)}|\partial_j\Psi(w+y)|
& \lesssim 1 - \sum_{j=1}^d \partial_j\Psi(w-\eta)\\
& = 1 + \sum_{j=1}^d\int_{\mathbb R^d_{-}}\pi_j(s)\varphi(s-w+\eta)ds\\
& = 1 + \int_{\mathbb R^d_{-}}\varphi(s-w+\eta)ds\leq 2,
\end{align*}
where the last line follows from (\ref{eq: pi sum}). This gives \eqref{eq: Step 2} for $v=1$.

Now, fix $v\geq 2$. By induction, we can assume that
\begin{equation}
\max_{1\leq u\leq v-1}\sup_{w\in\mathbb{R}^{d}}\sum_{j_{1},\dots,j_{u}=1}^{d}\sup_{y\in R(0,\eta)}\left|\partial_{j_{1},\dots,j_{u}}\Psi(w+y)\right|\lesssim(\log d)^{(u-1)/2}.\label{eq: induction}
\end{equation}
Also, define
\[
\mathcal{J}=\mathcal{J}^v(d)=\left\{ (j_{1},\dots,j_{v})\in\left\{ 1,\dots,d\right\} ^{v}\colon\text{all }j_{1},\dots,j_{v}\text{ are different}\right\}
\]
and 
$
\mathcal{J}^{c}=\left\{ 1,\dots,d\right\} ^{v}\setminus\mathcal{J}.
$
Like in the $v=1$ case, we can check that for all $w\in\mathbb R^d$, $y\in R(0,\eta)$, and $(j_1,\dots,j_v)\in\mathcal J$, we have
$$
|\partial_{j_1\dots j_v}\Psi(w+y)|\lesssim d^{-v} + (-1)^v \partial_{j_1\dots j_v}\Psi(w-\eta).
$$
Therefore,
\begin{align*}
&\sum_{j_{1},\dots,j_{v}=1}^{d}\sup_{y\in R(0,\eta)}|\partial_{j_{1}\dots j_{v}}\Psi(w+y)|\\
 & \lesssim 1 + \sum_{(j_{1},\dots,j_{v})\in\mathcal{J}^{c}}\sup_{y\in R(0,\eta)}|\partial_{j_{1}\dots j_{v}}\Psi(w+y)|+(-1)^v\sum_{j_{1},\dots,j_{v}=1}^{d}\partial_{j_{1}\dots j_{v}}\Psi(w-\eta).
\end{align*}
Here, for all $w\in\mathbb R^d$,
\begin{align*}
\left|\sum_{j_{1},\dots,j_{v}=1}^{d}\partial_{j_{1}\dots j_{v}}\Psi(w)\right| & =\sum_{j_{1},\dots,j_{v}=1}^{d}\int_{\mathbb{R}_{-}^{d}}\pi_{j_{1}}(s)\partial_{j_{2},\dots,j_{v}}\varphi\left(s-w\right)ds\\
 & =\sum_{j_{2},\dots,j_{v}=1}^{d}\int_{\mathbb{R}_{-}^{d}}\partial_{j_{2},\dots,j_{v}}\varphi\left(s-w\right)ds\lesssim(\log d)^{(v-1)/2}
\end{align*}
by (\ref{eq: pi sum}) and Lemma 2.2 in \cite{FK20}. 

Hence, it remains to prove that
\begin{equation}\label{eq: remaining piece}
\sum_{(j_{1},\dots,j_{v})\in\mathcal{J}^{c}}\sup_{y\in R(0,\eta)}|\partial_{j_{1}\dots j_{v}}\Psi(w+y)| \lesssim (\log d)^{(v-1)/2}.
\end{equation}
To do so, for all $(j_{1},\dots,j_{v})\in\mathcal{J}^{c}$,
let $N(j_{1},\dots,j_{v})$ denote the number of different indices
among $v$ indices $j_{1},\dots,j_{v}$. Then
\[
\mathcal{J}^{c}=\mathcal{J}_{1}\cup\dots\cup\mathcal{J}_{v-1},
\]
where
\[
\mathcal{J}_{u}=\left\{ (j_{1},\dots,j_{v})\in\mathcal{J}^{c}\colon N(j_{1},\dots,j_{v})=u\right\} ,\quad u=1,\dots,v-1.
\]
Thus,
\[
\sum_{(j_{1},\dots,j_{v})\in\mathcal{J}^{c}}\sup_{y\in R(0,\eta)}|\partial_{j_{1}\dots j_{v}}\Psi(w+y)|=\sum_{u=1}^{v-1}\sum_{(j_{1},\dots,j_{v})\in\mathcal{J}_{u}}\sup_{y\in R(0,\eta)}|\partial_{j_{1}\dots j_{v}}\Psi(w+y)|.
\]
Next, fix any $u=1,\dots,v-1$ and consider the corresponding sum on the right-hand side of the equality above.
Fix any $(j_{1},\dots,j_{v})\in\mathcal{J}_{u}$. By the definition of
$\mathcal{J}_{u}$, there are exactly $u$ different indices among
$v$ indices $j_{1},\dots,j_{v}$. Denote them by $o_{1},\dots,o_u$ and assume that they appear $k_{1},\dots,k_u$ times, respectively, where $k_{1}+\dots+k_{u}=v$. Then, denoting
$$
o=(o_1,\dots,o_u),\quad\mathcal J^o=\{1,\dots,d\}\setminus\{o_1,\dots,o_u\},
$$
we have for all $w\in\mathbb R^d$ that
\begin{align*}
|\partial_{j_{1}\dots j_{v}}\Psi(w)| &= \left| \int_{-\infty}^0 \prod_{j\in\mathcal J^o}\Phi_1(t-w_j)\prod_{i=1}^u \partial^{k_i}\Phi_1(t-w_{o_i})dt \right|\\
&\lesssim \int_{-\infty}^0 \prod_{j\in\mathcal J^o}\Phi_1(t-w_j)\prod_{i=1}^u \left((|t-w_{o_i}|^{k_i-1}+1)\varphi_1(t-w_{o_i})\right)dt.
\end{align*}
To bound the integral on the right-hand side here, consider the partition
$$
(-\infty,0] = \mathcal T_0 \cup \mathcal T_1 \cup \dots \cup \mathcal T_u,
$$
where
$$
\mathcal T_0 = \left\{t\in(-\infty,0]\colon \vee_{i=1}^u |t-w_{o_i}|\leq (4v^2\log d)^{1/2}\right\}
$$
and
$$
\mathcal T_i = \left\{t\in(-\infty,0]\setminus\mathcal T_0\colon i=\arg\max_{1\leq k\leq u}|t-w_{o_k}|\right\},\quad i=1,\dots,u.
$$
Then
\begin{multline*}
\int_{\mathcal T_0} \prod_{j\in\mathcal J^o}\Phi_1(t-w_j)\prod_{i=1}^u \left((|t-w_{o_i}|^{k_i-1}+1)\varphi_1(t-w_{o_i})\right)dt 
\lesssim (\log d)^{(v-u)/2}|\partial_{o_1\dots o_u}\Psi(w)|
\end{multline*}
and, for all $i=1,\dots,u$,
\begin{align*}
&\int_{\mathcal T_i} \prod_{j\in\mathcal J^o}\Phi_1(t-w_j)\prod_{i=1}^u \left((|t-w_{o_i}|^{k_i-1}+1)\varphi_1(t-w_{o_i})\right)dt \\
&\qquad \lesssim \int_{\mathcal T_i}|t-w_{o_i}|^{v-u}\varphi_1(t-w_{o_i})dt \lesssim \int_{(4v^2\log d)^{1/2}}^{+\infty} t^v\varphi_1(t)dt \\
&\qquad \lesssim \int_{(4v^2\log d)^{1/2}}^{+\infty}\exp(v\log t - t^2/2)dt \lesssim \int_{(4v^2\log d)^{1/2}}^{+\infty}\exp(-t^2/4)dt \\
&\qquad = \sqrt2\int_{(2v^2\log d)^{1/2}}^{+\infty}\exp(-t^2/2)dt \lesssim \varphi_1\left((2v^2\log d)^{1/2}\right) \lesssim d^{-{v}}.
\end{align*}
Combining these bounds, we obtain
\begin{align*}
&\sum_{(j_1,\dots,j_v)\in\mathcal J^u}\sup_{y\in R(0,\eta)}|\partial_{j_{1}\dots j_{v}}\Psi(w+y)|\\
&\qquad\quad  \lesssim 1 + (\log d)^{(v-u)/2}\sum_{o_1,\dots,o_u=1}^d\sup_{y\in R(0,\eta)}|\partial_{o_1\dots o_u}\Psi(w+y)|\\
& \qquad \quad\lesssim 1 + (\log d)^{(v-u)/2}(\log d)^{(u-1)/2}\lesssim (\log d)^{(v-1)/2},
\end{align*}
where the third line follows from \eqref{eq: induction}. Therefore, given that $u=1,\dots,v-1$ here is arbitrary, \eqref{eq: remaining piece} follows, which gives the asserted claim of this step and completes the proof of the lemma. 
\end{proof}

\begin{proof}[Proof of Lemma \ref{lemma:fk2.2}]
As in the proof of Lemma \ref{lem: smoothing inequality mixed 1}, it suffices to consider the case with $\epsilon=1$ and $\Sigma=I_d$, which is what we do below. 
Then, similarly to the proof of \eqref{rho-deriv}, we obtain
\ba{
\partial_{j_1,\dots,j_v}\rho(w+y)
&=(-1)^v\phi\int_0^{\phi^{-1}}\left\{\int_{A^s}\partial_{j_1,\dots,j_v}\varphi(z-w-y)dz\right\}ds\\
&=(-1)^v\phi\int_0^{\phi^{-1}}\left\{\int_{A^s-w}\partial_{j_1,\dots,j_v}\varphi(z-y)dz\right\}ds.
}
Note that $A^s-w\in\mathcal R$. Thus, combining this identity with Lemma 2.2 in \cite{FK20} gives the asserted claim.
%
\end{proof}

\begin{proof}[Proof of Lemma \ref{lemma:vanish}]
As in the beginning of the proof of Lemma \ref{lem: smoothing inequality mixed 1}, define $\Sigma^1 = \Sigma - \sigma_*^2 I_d$ and $\Sigma^2 = \sigma_*^2 I_d$ and let $Z^1$ and $Z^2$ be independent random vectors in $\mathbb R^d$ such that $Z^1\sim N(0,\Sigma^1)$ and $Z^2\sim N(0,\Sigma^2)$. Then
\begin{align*}
&\sup_{w\in(A^{2\epsilon\kappa+\phi^{-1}}\setminus A^{-2\epsilon\kappa})^c}\sum_{j_{1},\dots,j_{v}=1}^{d}\sup_{y\in R(0,\epsilon\sigma_*\eta)}|\partial_{j_{1},\dots,j_{v}}\rho^{A,\phi,\epsilon,\Sigma}(w+y)| \\
&\qquad \leq \Ep\left[\sup_{w\in(A^{2\epsilon\kappa+\phi^{-1}}\setminus A^{-2\epsilon\kappa})^c}\sum_{j_{1},\dots,j_{v}=1}^{d}\sup_{y\in R(0,\epsilon\sigma_*\eta)}|\partial_{j_{1},\dots,j_{v}}\rho^{A,\phi,\epsilon\sigma_*,I_d}(w + \epsilon Z^1 +y)|\right].
\end{align*}
Also, by the union and Chernoff's bounds,
$$
\Pr\left( \|Z^1\|_{\infty} >\kappa \right)\leq 2de^{-\kappa^2/2}\leq 2de^{-(\kappa-\eta)^2/4}.
$$
Thus, by Lemma \ref{lemma:fk2.2},
\begin{align*}
&\Ep\left[\sup_{w\in(A^{2\epsilon\kappa+\phi^{-1}}\setminus A^{-2\epsilon\kappa})^c}\sum_{j_{1},\dots,j_{v}=1}^{d}\sup_{y\in R(0,\epsilon\sigma_*\eta)}|\partial_{j_{1},\dots,j_{v}}\rho^{A,\phi,\epsilon\sigma_*,I_d}(w + \epsilon Z^1 +y)|\right]\\
&\qquad \lesssim \sup_{w\in(A^{\epsilon\kappa+\phi^{-1}}\setminus A^{-\epsilon\kappa})^c}\sum_{j_{1},\dots,j_{v}=1}^{d}\sup_{y\in R(0,\epsilon\sigma_*\eta)}|\partial_{j_{1},\dots,j_{v}}\rho^{A,\phi,\epsilon\sigma_*,I_d}(w +y)| + \frac{(\log d)^{v/2}}{(\epsilon\sigma_*)^v}\times de^{-(\kappa-\eta)^2/4}.
\end{align*}
Further, using \eqref{rho-epsilon}, we obtain
\ba{
&\sup_{A\in\mathcal R}\sup_{w\in(A^{\epsilon\kappa+\phi^{-1}}\setminus A^{-\epsilon\kappa})^c}\sum_{j_{1},\dots,j_{v}=1}^{d}\sup_{y\in R(0,\epsilon\sigma_*\eta)}|\partial_{j_{1},\dots,j_{v}}\rho^{r,\phi,\epsilon\sigma_*,I_d}(w +y)| \\
&\qquad=\frac{1}{\epsilon^v}\sup_{A\in\mathcal R}\sup_{w\in(A^{\epsilon\kappa+\phi^{-1}}\setminus A^{-\epsilon\kappa})^c}\sum_{j_{1},\dots,j_{v}=1}^{d}\sup_{y\in R(0,\epsilon\sigma_*\eta)}|\partial_{j_{1},\dots,j_{v}}\rho^{(\epsilon\sigma_*)^{-1}A,\epsilon\sigma_*\phi,1,I_d}((w +y)/(\epsilon\sigma_*))| \\
&\qquad\leq\frac{1}{\epsilon^v}\sup_{A\in\mathcal R}\sup_{w\in(A^{\kappa+(\epsilon\sigma_*\phi)^{-1}}\setminus A^{-\kappa})^c}\sum_{j_{1},\dots,j_{v}=1}^{d}\sup_{y\in R(0,\eta)}|\partial_{j_{1},\dots,j_{v}}\rho^{A,\epsilon\sigma_*\phi,1,I_d}(w +y)|,
}
where we use the inequality $\kappa/\sigma_*\geq\kappa$ to deduce the last line. 
Combining these inequalities shows that the asserted claim for general $\Sigma$ and $\epsilon$ follows from the asserted claim for $\Sigma = I_d$ and $\epsilon=1$ with replacing $A^{2\epsilon\kappa+\phi^{-1}}$ and $A^{-\epsilon\kappa}$ by $A_1:=A^{\kappa+\phi^{-1}}$ and $A_2:=A^{-\kappa}$, respectively. In what follows, we therefore set $\Sigma=I_d$ and $\epsilon=1$. 

Further, note that identity \eqref{rho-deriv} derived in the proof of Lemma \ref{lem: smoothing inequality mixed 1} did not rely on any specific assumptions of Lemma \ref{lem: smoothing inequality mixed 1}, and so remains valid under current assumptions. We will use this identity below.

Next, note that 
\ba{
&\sup_{w\in(A_1\setminus A_2)^c}\sum_{j_{1},\dots,j_{v}=1}^{d}\sup_{y\in R(0,\eta)}|\partial_{j_{1},\dots,j_{v}}\rho(w+y)|\\
&\qquad\lesssim \sum_{u=1}^v\sum_{(\nu_1,\dots,\nu_u)\in\mathcal{N}^u(v)}\sup_{w\in(A_1\setminus A_2)^c}\sum_{(j_1,\dots,j_u)\in\mathcal{J}^u(d)}\sup_{y\in R(0,\eta)}|\partial_{j_{1}}^{\nu_1}\cdots\partial_{j_{u}}^{\nu_u}\rho(w+y)|.
}
Further, by \eqref{rho-deriv}, 
\ba{
|\partial_{j_{1}}^{\nu_1}\cdots\partial_{j_{u}}^{\nu_u}\rho(w+y)|
\leq\phi\int_0^{\phi^{-1}}\left|\int_{A^s}\partial_{j_1}^{\nu_1}\cdots\partial_{j_u}^{\nu_u}\varphi(z-w-y)dz\right|ds.
}
For each $u=1,\dots,v$, the cardinality of the set $\mathcal{N}^u(v)$ is bounded by a constant depending only on $v$. Therefore, it suffices to show that
\ben{\label{vanish1}
\sup_{w\in A_1^c}\sum_{(j_1,\dots,j_u)\in\mathcal{J}^u(d)}\sup_{y\in R(0,\eta)}\left|\int_{A^s}\partial_{j_1}^{\nu_1}\cdots\partial_{j_u}^{\nu_u}\varphi(z-w-y)dz\right|\lesssim d^ve^{-(\kappa-\eta)^2/4}
}
and 
\ben{\label{vanish2}
\sup_{w\in A_2}\sum_{(j_1,\dots,j_u)\in\mathcal{J}^u(d)}\sup_{y\in R(0,\eta)}\left|\int_{A^s}\partial_{j_1}^{\nu_1}\cdots\partial_{j_u}^{\nu_u}\varphi(z-w-y)dz\right|\lesssim d^ve^{-(\kappa-\eta)^2/4}
}
for any (fixed) $s\in[0,\phi^{-1}]$ and $(\nu_1,\dots,\nu_u)\in\mathcal{N}_u(v)$ with $u\in\{1,\dots,v\}$.

For any $w\in\mathbb R^d$, we have
\ba{
I(w)&:=\sum_{(j_1,\dots,j_u)\in\mathcal{J}^u(d)}\sup_{y\in R(0,\eta)}\left|\int_{A^s}\partial_{j_1}^{\nu_1}\cdots\partial_{j_u}^{\nu_u}\varphi(z-w-y)dz\right|\\
&=\sum_{(j_1,\dots,j_u)\in\mathcal{J}^u(d)}\sup_{y\in R(0,\eta)}\left(\prod_{q=1}^u\left|h_{\nu_q}(b^w_{j_q}+s-y_{j_q})-h_{\nu_q}(a^w_{j_q}-s-y_{j_q})\right|\right)\\
&\qquad\times\prod_{k:k\neq j_1,\dots,j_u}\left\{\Phi_1(b^w_k+s-y_k)-\Phi_1(a^w_k-s-y_k)\right\},
}
where $a^w_j:=a_j-w_j$ and $b^w_j:=b_j-w_j$. Since 
\ba{
\Phi_1(b^w_k+s-y_k)-\Phi_1(a^w_k-s-y_k)
&=\Phi_1(b^w_k+s-y_k)+\Phi_1(-a^w_k+s+y_k)-1\\
&\leq\Phi_1(b^w_k+s-y_k)\wedge\Phi_1(-a^w_k+s+y_k)
}
and
$|h_\nu(t)|\lesssim(1+|t|^{\nu-1})e^{-t^2/2}\lesssim e^{-t^2/4}$ for all $t\in\mathbb R$, we obtain
\bmn{\label{I-bound}
I(w)
\lesssim\sum_{(j_1,\dots,j_u)\in\mathcal{J}^u(d)}\sup_{y\in R(0,\eta)}\left(\prod_{q=1}^u\left(e^{-(b^w_{j_q}+s-y_{j_q})^2/4}+e^{-(a^w_{j_q}-s-y_{j_q})^2/4}\right)\right)\\
\times\prod_{k:k\neq j_1,\dots,j_u}\Phi_1(b^w_k+s-y_k)\wedge\Phi_1(-a^w_k+s+y_k).
}

Now, if $w\in A_1^c$, there is an $l\in\{1,\dots,d\}$ such that $w_l>b_l+\kappa+\phi^{-1}$ or $w_l\leq a_l-\kappa-\phi^{-1}$. When the former holds, then for any $y\in R(0,\eta)$,
\[
a^w_l-s-y_l<b^w_l+s-y_l<-\kappa-\phi^{-1}+s-y_l\leq-\kappa+\eta<0.
\] 
When the latter holds, then for any $y\in R(0,\eta)$,
\[
b^w_l+s-y_l>a^w_l-s-y_l\geq\kappa+\phi^{-1}-s-y_l\geq\kappa-\eta>0.
\]
Hence, in either case, we have by \eqref{I-bound}
\ba{
I(w)&\lesssim \sum_{(j_1,\dots,j_u)\in\mathcal{J}^u(d)}e^{-(\kappa-\eta)^2/4}\vee\Phi_1(-\kappa+\eta)
\leq d^ve^{-(\kappa-\eta)^2/4}\vee\Phi_1(-\kappa+\eta).
}
Since $\Phi_1(-\kappa+\eta)\leq e^{-(\kappa-\eta)^2/2}$ by Chernoff's bound, we obtain \eqref{vanish1}. 

Next, if $w\in A_2$, $a_j+\kappa<w_j\leq b_j-\kappa$ for all $j=1,\dots,d$. Hence $b^w_j+s-y_j\geq\kappa-\eta>0$ and $-a^w_j+s+y_j>\kappa-\eta>0$. Thus, by \eqref{I-bound},
\ba{
I(w)
&\lesssim\sum_{(j_1,\dots,j_u)\in\mathcal{J}^u(d)}\prod_{q=1}^ue^{-(\kappa-\eta)^2/4}
\lesssim d^ve^{-(\kappa-\eta)^2/4}.
}
Hence we obtain \eqref{vanish2} and complete the proof of the lemma.
\end{proof}

\section{Auxiliary Lemmas}\label{sec: auxiliary lemmas}
\begin{lemma}\label{lem: stein version}
Let $Z=(Z_1,\dots,Z_d)^T$ be a centered Gaussian random vector in $\mathbb R^d$ with a non-singular covariance matrix $\Sigma = (\Sigma_{jk})_{j,k=1}^d$. Then for any $j=1,\dots,d$, $\epsilon>0$, $w\in\mathbb R^d$, and bounded and measurable $h\colon\mathbb R^d\to\mathbb R$,
$$
\Ep h(w+\epsilon Z)Z_j = \epsilon\sum_{k=1}^d \partial_k h_{\epsilon}(w)\Sigma_{jk},
$$
where $h_{\epsilon}\colon\mathbb R^d\to\mathbb R$ is given by $h_{\epsilon}(w)=\Ep h(w+\epsilon Z)$ for all $w\in\mathbb R^d$.
\end{lemma}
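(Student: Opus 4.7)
The plan is to prove this by Gaussian integration by parts, keeping all derivatives on the Gaussian density rather than on $h$ (which is only assumed bounded and measurable). Let $p(z) := (2\pi)^{-d/2}(\det\Sigma)^{-1/2}\exp(-\frac{1}{2}z^T\Sigma^{-1}z)$ denote the density of $Z$. A direct computation gives $\nabla p(z) = -\Sigma^{-1}z\,p(z)$, so for each $j=1,\dots,d$,
\[
z_j p(z) = -\sum_{k=1}^d \Sigma_{jk}\,\partial_k p(z),
\]
using the identity $\sum_{k}\Sigma_{jk}(\Sigma^{-1})_{kl}=\delta_{jl}$. This is the key algebraic identity behind Stein's lemma.

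The next step is to write the expectation as an integral and apply this identity:
\[
\Ep [h(w+\epsilon Z)Z_j]=\int_{\mathbb R^d} h(w+\epsilon z)z_j p(z)\,dz=-\sum_{k=1}^d \Sigma_{jk}\int_{\mathbb R^d} h(w+\epsilon z)\,\partial_k p(z)\,dz.
\]
Since $h$ is bounded and $|\partial_k p(z)|$ decays faster than any polynomial in $\|z\|$, the remaining integrals are absolutely convergent.

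The final step is to identify $\int h(w+\epsilon z)\partial_k p(z)\,dz$ with a derivative of $h_\epsilon$. Performing the change of variables $y=w+\epsilon z$ gives $h_\epsilon(w) = \epsilon^{-d}\int h(y) p((y-w)/\epsilon)\,dy$, and differentiating under the integral sign (justified by dominated convergence using boundedness of $h$ together with the Gaussian decay of $\nabla p$) yields
\[
\partial_k h_\epsilon(w) = -\epsilon^{-d-1}\int h(y)(\partial_k p)((y-w)/\epsilon)\,dy = -\epsilon^{-1}\int h(w+\epsilon z)\,\partial_k p(z)\,dz,
\]
after undoing the change of variables. Substituting back gives $\Ep[h(w+\epsilon Z)Z_j]=\epsilon\sum_{k=1}^d \Sigma_{jk}\,\partial_k h_\epsilon(w)$, which is the claim.

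The only subtlety to watch out for is the justification of differentiation under the integral sign without any smoothness of $h$; this is where boundedness of $h$ and the rapid decay of $\nabla p$ do the work. Once those are in place the proof is essentially just the classical Stein identity rewritten so that all derivatives land on the smooth convolution $h_\epsilon$ rather than on $h$ itself.
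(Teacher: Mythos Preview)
Your proof is correct and follows essentially the same approach as the paper: both write $h_\epsilon$ as a convolution with the Gaussian density, differentiate under the integral (so all derivatives fall on the smooth density rather than on $h$), and then use the Stein-type identity relating $z_j p(z)$ to $\nabla p(z)$. The only cosmetic difference is that you work directly with the density of $N(0,\Sigma)$ and keep $\epsilon$ general, whereas the paper first rescales to $\epsilon=1$ and passes to a standard normal via $A=\Sigma^{-1/2}$; the underlying computation is the same, and your justification of differentiation under the integral matches the paper's appeal to dominated convergence.
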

\begin{remark}
This lemma is a version of Stein's identity suitable for non-differentiable functions $h$. Although the lemma seems to be rather well known, we provide its proof below for reader's convenience. \qed
\end{remark}
\begin{proof}
Observe that the asserted claim for general $\epsilon>0$ follows from the asserted claim for $\epsilon = 1$ by rescaling of the vector $Z$. Therefore, we only consider the case $\epsilon = 1$.

Now, fix any $j=1,\dots,d$, $w\in\mathbb R^d$, and bounded and measurable $h\colon\mathbb R^d\to\mathbb R$. Then, denoting $A=\Sigma^{-1/2}$, so that $V=AZ\sim N(0,I_d)$, we have
\begin{align*}
h_{\epsilon}(w)
&=\Ep h(w+Z) = \Ep h(w+A^{-1}V)\\
&=\int h(w+A^{-1}v)\varphi(v)dv = |A|\int h(s)\varphi(A(s-w))ds,
\end{align*}
where $\varphi$ is the pdf of the standard normal distribution on $\mathbb R^d$ and $|A|$ is the determinant of $A$. Thus, differentiating under the integral, which is allowed by Corollary A.10 in \cite{D99}, for all $k=1,\dots d$,
\begin{align*}
\partial_k h_{\epsilon}(w) 
& =- |A|\int h(s)\sum_{l=1}^d A_{kl} \partial_l  \varphi(A(s-w)) ds\\
&= - \int h(w+A^{-1}v)\sum_{l=1}^d A_{kl} \partial_l  \varphi(v) dv
= \int h(w+A^{-1}v)\sum_{l=1}^d A_{kl} v_l \varphi(v) dv.
\end{align*}
Hence,
\begin{align*}
\sum_{k=1}^d\partial_k h_{\epsilon}(w) \Sigma_{jk} &= \int h(w+A^{-1}v)\sum_{l=1}^d \sum_{k=1}^d \Sigma_{jk} A_{kl} v_l \varphi(v) dv \\
&= \int h(w+A^{-1}v)\sum_{l=1}^d (\Sigma^{1/2})_{jl} v_l \varphi(v) dv = \Ep h(w + Z)Z_j,
\end{align*}
where the last equality follows from $Z=\Sigma^{1/2}V$. The asserted claim follows.
\end{proof}

\begin{lemma}\label{lem: rao}
Let $X$, $Y$, and $Z$ be independent random vectors in $\mathbb R^d$. Denote
$$
\zeta:=\sup_{r\in\mathbb R^d}\Big| \Pr(X\leq r) - \Pr(Y\leq r) \Big|\quad\text{and}\quad
 \gamma:= \sup_{r\in\mathbb R^d}\Big| \Pr(X+Z\leq r) - \Pr(Y+Z\leq r) \Big|
$$
and let $\epsilon >0$ be such that 
$
\alpha:=\Pr(Z\in R(0,\epsilon)) > 1/2.
$
Then
$$
\zeta \leq \frac{\gamma+\alpha\tau}{2\alpha-1},
$$
where
$$
\tau:=\sup_{r\in\mathbb R^d}\Big| \Pr(Y\leq r+2\epsilon) - \Pr(Y\leq r) \Big|.
$$
\end{lemma}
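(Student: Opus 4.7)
The plan is to establish a self-referential bound for $\zeta$ whose right-hand side contains $\zeta$ with coefficient $1-\alpha$, and then close the recursion using $\alpha>1/2$. Fix $r\in\mathbb{R}^d$ and, by independence of $Z$ from $X$ and $Y$, decompose
\[
\Pr(X+Z\leq r+\epsilon)-\Pr(Y+Z\leq r+\epsilon)=I_1+I_2,
\]
where $I_1$ and $I_2$ are the contributions of the events $\{Z\in R(0,\epsilon)\}$ and $\{Z\notin R(0,\epsilon)\}$ respectively. On $\{Z\in R(0,\epsilon)\}$ the coordinatewise inequalities $-\epsilon\leq Z\leq\epsilon$ give $r\leq r+\epsilon-Z\leq r+2\epsilon$, so writing each probability as an integral against the law of $Z$ yields $I_1\geq\alpha\Pr(X\leq r)-\alpha\Pr(Y\leq r+2\epsilon)$. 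On $\{Z\notin R(0,\epsilon)\}$ I would apply the pointwise bound $|\Pr(X\leq s)-\Pr(Y\leq s)|\leq\zeta$ inside the integral and conclude $|I_2|\leq(1-\alpha)\zeta$. Combined with the a priori inequality $I_1+I_2\leq\gamma$, this gives
\[
\alpha\Pr(X\leq r)\leq\alpha\Pr(Y\leq r+2\epsilon)+(1-\alpha)\zeta+\gamma.
\]

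Next I would subtract $\alpha\Pr(Y\leq r)$ from both sides and use $\Pr(Y\leq r+2\epsilon)-\Pr(Y\leq r)\leq\tau$ to obtain the one-sided bound
\[
\alpha\bigl[\Pr(X\leq r)-\Pr(Y\leq r)\bigr]\leq\alpha\tau+(1-\alpha)\zeta+\gamma.
\]
For the matching reverse inequality, I would run exactly the same argument with the roles of $X$ and $Y$ swapped to get $\alpha\Pr(Y\leq r')\leq\alpha\Pr(X\leq r'+2\epsilon)+(1-\alpha)\zeta+\gamma$, then specialize to $r'=r-2\epsilon$ and use $\Pr(Y\leq r)-\Pr(Y\leq r-2\epsilon)\leq\tau$ to deduce
\[
-\alpha\bigl[\Pr(X\leq r)-\Pr(Y\leq r)\bigr]\leq\alpha\tau+(1-\alpha)\zeta+\gamma.
\]

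Taking the supremum over $r\in\mathbb{R}^d$ on the left yields $\alpha\zeta\leq\alpha\tau+(1-\alpha)\zeta+\gamma$, which rearranges to $(2\alpha-1)\zeta\leq\alpha\tau+\gamma$; since $\alpha>1/2$ we may divide by $2\alpha-1>0$ to obtain the claim.

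The one slightly subtle ingredient, and the only place where the proof would fail with a cruder execution, is the estimate $|I_2|\leq(1-\alpha)\zeta$. Replacing it by the trivial $|I_2|\leq 1-\alpha$ would leave an additive constant $1-\alpha$ in the final inequality and prevent the recursion from closing. The sharper $(1-\alpha)\zeta$ bound is precisely what lets the assumption $\alpha>1/2$ produce the divisor $2\alpha-1$ and eliminate any uncontrolled additive term.
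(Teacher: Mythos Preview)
Your proof is correct and follows essentially the same approach as the paper's: both decompose $\Pr(X+Z\leq r+\epsilon)-\Pr(Y+Z\leq r+\epsilon)$ according to whether $Z\in R(0,\epsilon)$, bound the inside part using monotonicity of the CDFs (picking up $\tau$) and the outside part by $(1-\alpha)\zeta$, and then close the resulting self-referential inequality. The only cosmetic difference is that the paper argues by cases on which sign attains $\zeta$, whereas you establish both one-sided bounds directly (handling the reverse direction by swapping $X,Y$ and shifting $r'=r-2\epsilon$); these are equivalent maneuvers.
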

\begin{remark}
This result is an adaptation of Lemma 2.4 from \cite{FK20} with hopefully easier to follow notations and is a version of Lemma 11.4 from \cite{BR76}. We provide a proof here for reader's convenience.
\qed
\end{remark}
\begin{proof}
Note that
$$
\zeta = \max\left(\sup_{r\in\mathbb R^d}\Big( \Pr(X\leq r) - \Pr(Y\leq r) \Big),\sup_{r\in\mathbb R^d}\Big( \Pr(Y\leq r) - \Pr(X\leq r) \Big)\right)
$$
and consider the case
\begin{equation}\label{eq: case 1 ini smo}
\zeta = \sup_{r\in\mathbb R^d}\Big( \Pr(X\leq r) - \Pr(Y\leq r) \Big).
\end{equation}
In this case, for any $r\in\mathbb R^d$, we have
\begin{equation}\label{eq: i1-i2}
\Pr(X+Z\leq r + \epsilon) - \Pr(Y+Z\leq r + \epsilon) =\mathcal I_{1,r} + \mathcal I_{2,r},
\end{equation}
where
$$
\mathcal I_{1,r} = \Ep\Big[\Big(1\{X+Z\leq r + \epsilon\} - 1\{Y+Z\leq r+\epsilon\}\Big)1\{Z\in R(0,\epsilon)\}\Big],
$$
$$
\mathcal I_{2,r} = \Ep\Big[\Big(1\{X+Z\leq r + \epsilon\} - 1\{Y+Z\leq r+\epsilon\}\Big)1\{Z\notin R(0,\epsilon)\}\Big].
$$
Here, denoting $\zeta_r:=\Pr(X\leq r) - \Pr(Y\leq r)$, we have
\begin{align*}
\mathcal I_{1,r}
&\geq \Ep\Big[\Big(1\{X\leq r\} - 1\{Y\leq r+2\epsilon\}\Big)1\{Z\in R(0,\epsilon)\}\Big]\\
& = \Ep\Big[1\{X\leq r\} - 1\{Y\leq r+2\epsilon\}\Big]\Ep\Big[1\{Z\in R(0,\epsilon)\}\Big] \geq (\zeta_r - \tau)\alpha
\end{align*}
and
\begin{align*}
\mathcal I_{2,r}
& = \Ep\Big[\Ep\Big[1\{X+Z\leq r + \epsilon\} - 1\{Y+Z\leq r+\epsilon\}\mid Z\Big]1\{Z\notin R(0,\epsilon)\}\Big]\geq - \zeta(1-\alpha).
\end{align*}
Therefore, taking the supremum over $r\in\mathbb R^d$ in \eqref{eq: i1-i2} and recalling \eqref{eq: case 1 ini smo}, we have
$$
\gamma \geq \zeta(2\alpha - 1) - \tau\alpha.
$$
Rearranging the terms in this inequality gives the asserted claim under \eqref{eq: case 1 ini smo}, and since the case
$$
\zeta = \sup_{r\in\mathbb R^d}\Big( \Pr(Y\leq r) - \Pr(X\leq r) \Big)
$$
is similar, the proof is complete.
\end{proof}

\begin{lemma}[Nazarov's inequality]\label{lem: anticoncentration}
Let $Z = (Z_1,\dots,Z_d)^T$ be a centered Gaussian random vector in $\mathbb R^d$ such that $\Ep Z_j^2\geq 1$ for all $j=1,\dots,d$ with $d\geq 3$. Then for any $z\in\mathbb R^d$ and any $\varepsilon>0$,
$$
\Pr(Z\leq z+\varepsilon)-\Pr(Z\leq z)\leq C\varepsilon\sqrt{\log d},
$$
where $C>0$ is a universal constant.
\end{lemma}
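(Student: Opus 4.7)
The plan is to reduce the inequality to a uniform bound on the density of the coordinate-wise maximum $M := \max_{1 \leq j \leq d}(Z_j - z_j)$, and then invoke the classical Gaussian maximum density estimate.

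First, the identities $\{Z \leq z + \varepsilon\} = \{M \leq \varepsilon\}$ and $\{Z \not\leq z\} = \{M > 0\}$ combine to give
\begin{equation*}
\Pr(Z \leq z + \varepsilon) - \Pr(Z \leq z) = \Pr(0 < M \leq \varepsilon).
\end{equation*}
Since $\Ep Z_j^2 \geq 1$ ensures each coordinate is non-degenerate Gaussian, the law of $M$ is absolutely continuous with some density $f_M$; hence the right-hand side is bounded by $\varepsilon \sup_{x}f_M(x)$, and the task reduces to proving $\sup_x f_M(x) \leq C\sqrt{\log d}$.

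For this uniform density bound, the Kac--Rice level-crossing formula gives
\begin{equation*}
f_M(x) = \sum_{j=1}^d \phi_{\sigma_j}(z_j + x)\,\Pr\bigl(Z_k - z_k \leq x \text{ for all } k \neq j \bigm| Z_j = z_j + x\bigr),
\end{equation*}
where $\phi_{\sigma_j}$ is the Gaussian density with $\sigma_j^2 = \Ep Z_j^2 \geq 1$. The naive estimate $f_M(x) \leq \sum_j \phi_{\sigma_j}(z_j + x) \lesssim d$ is too weak by a factor of $\sqrt{d/\log d}$. The sharp $\sqrt{\log d}$ rate comes from the fact that under $\sigma_j \geq 1$, the maximum $M$ concentrates around a value of order $\sqrt{\log d}$ with fluctuations of order $1/\sqrt{\log d}$. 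My approach would be to introduce $\Psi(t) := \Phi_1^{-1}(\Pr(M \leq t))$ and apply Ehrhard's inequality to the family of convex sets $\{M \leq t\} = \bigcap_j \{Z_j \leq z_j + t\}$ to deduce that $\Psi$ is concave; then the Borell--TIS concentration bound for $M$ converts the identity $f_M = \phi_1(\Psi)\,\Psi'$ into the desired uniform estimate.

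The main obstacle is executing this Ehrhard-plus-concentration step cleanly. The sets $\{M \leq t\}$ are unbounded intersections of half-spaces that shift with $t$, and converting the concavity of $\Psi$ into an explicit $\sqrt{\log d}$ density bound requires a delicate analysis near $t \approx \Ep M$. Since this is the classical Nazarov anti-concentration inequality, the natural route for the paper is to cite Nazarov (2003) or a subsequent self-contained exposition (e.g., in the Chernozhukov--Chetverikov--Kato line of work) rather than reproduce the full argument.
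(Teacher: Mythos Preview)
Your proposal is correct: the paper does not reproduce the argument at all but simply cites Lemma~A.1 in \cite{CCK17}, exactly as you anticipated in your final paragraph. Your sketched route via the density of $M=\max_j(Z_j-z_j)$ and the concavity of $\Phi_1^{-1}(\Pr(M\le t))$ is in fact the Nazarov argument that the cited reference (and the companion note \cite{CCK17nazarov}) carries out in detail.
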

\begin{proof}
See Lemma A.1 in \cite{CCK17}.
\end{proof}

\begin{lemma}\label{lem: maximal}
Let $X_1,\dots,X_n$ be independent centered random vectors in $\mathbb R^d$ with $d\geq 2$. Define the following quantities: $Z:=\max_{1\leq j\leq d}|\sum_{i=1}^n X_{ij}|$, $M:=\max_{1\leq i\leq n}\max_{1\leq j\leq d}|X_{ij}|$, and $\sigma^2:=\max_{1\leq j\leq d}\sum_{i=1}^n\Ep[X_{ij}^2]$. Then
$$
\Ep[Z]\leq C\left( \sigma\sqrt{\log d} + \sqrt{\Ep[M^2]}\log d \right),
$$
where $C>0$ is a universal constant.
\end{lemma}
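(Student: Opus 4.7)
My plan is to run the classical symmetrization plus sub-Gaussian maximal inequality argument, and then close a quadratic inequality in the quantity
$$
B := \sqrt{\Ep\max_{1\leq j\leq d}\sum_{i=1}^n X_{ij}^2}.
$$

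First, by the standard symmetrization inequality for suprema of empirical processes, there are i.i.d.\ Rademacher variables $\epsilon_1,\dots,\epsilon_n$, independent of $(X_i)_{i=1}^n$, such that
$$
\Ep Z \;\lesssim\; \Ep\max_{1\leq j\leq d}\Big|\sum_{i=1}^n\epsilon_i X_{ij}\Big|.
$$
Conditional on $(X_i)$, each sum $\sum_i\epsilon_iX_{ij}$ is sub-Gaussian with variance proxy $\sum_iX_{ij}^2$, so the classical maximal inequality for sub-Gaussian random variables (using $d\geq 2$) gives
$$
\Ep_\epsilon\max_{1\leq j\leq d}\Big|\sum_{i=1}^n\epsilon_iX_{ij}\Big| \;\lesssim\; \sqrt{\log d}\,\max_{1\leq j\leq d}\sqrt{\sum_{i=1}^nX_{ij}^2}.
$$
Taking outer expectation and using Jensen's inequality, the problem reduces to showing $B \lesssim \sigma + \sqrt{(\log d)\Ep M^2}$, since $\Ep Z \lesssim \sqrt{\log d}\,B$.

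For this, I would split $\sum_i X_{ij}^2 \leq \sum_i \Ep X_{ij}^2 + \big|\sum_i(X_{ij}^2-\Ep X_{ij}^2)\big|$ and take the max over $j$ on both sides, which yields
$$
B^2 \;\leq\; \sigma^2 + \Ep\max_{1\leq j\leq d}\Big|\sum_{i=1}^n(X_{ij}^2-\Ep X_{ij}^2)\Big|.
$$
To handle the fluctuation, symmetrize once more to introduce Rademacher variables on the squared summands, then apply the same conditional sub-Gaussian maximal inequality to $\sum_i\epsilon_iX_{ij}^2$:
$$
\Ep\max_j\Big|\sum_i(X_{ij}^2-\Ep X_{ij}^2)\Big| \;\lesssim\; \sqrt{\log d}\,\Ep\max_j\sqrt{\sum_i X_{ij}^4}.
$$
Now use the trivial bound $X_{ij}^4\leq M^2 X_{ij}^2$ inside the root to pull $M$ out, followed by Cauchy-Schwarz:
$$
\Ep\max_j\sqrt{\sum_iX_{ij}^4} \;\leq\; \Ep\Big[M\max_j\sqrt{\sum_iX_{ij}^2}\Big] \;\leq\; \sqrt{\Ep M^2}\cdot B.
$$

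Combining these bounds gives the closed quadratic inequality
$$
B^2 \;\leq\; \sigma^2 + C\sqrt{(\log d)\,\Ep M^2}\cdot B,
$$
from which $B \lesssim \sigma + \sqrt{(\log d)\,\Ep M^2}$ follows immediately by the quadratic formula. Plugging this back into $\Ep Z \lesssim \sqrt{\log d}\,B$ yields the asserted inequality.

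The argument is essentially routine; the only step that requires a little care is the transition from $\Ep\max_j\sqrt{\sum_iX_{ij}^4}$ to something involving $M$ and $B$, since a careless application of Cauchy-Schwarz would give $\sqrt{\Ep M^2}\sqrt{\Ep\max_j\sum_iX_{ij}^2}$, which is exactly $\sqrt{\Ep M^2}\cdot B$ and keeps the quadratic inequality closed. Everything else is a direct application of symmetrization and the Gaussian-type maximal inequality for $d$ sub-Gaussian variables.
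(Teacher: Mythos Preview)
Your argument is correct and complete. The paper itself does not prove this lemma but simply cites Lemma~8 in \cite{CCK15}; the proof there follows exactly the symmetrization--sub-Gaussian maximal inequality--quadratic bootstrap route you describe, so your approach coincides with the cited source.
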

\begin{proof}
See Lemma 8 in \cite{CCK15}
\end{proof}

\begin{lemma}
\label{lem: fuk-nagaev}
Assume the setting of Lemma \ref{lem: maximal}. 
(i) For every $\eta > 0, \beta \in (0,1]$ and $t>0$,
\[
\Pr \{  Z \geq (1+\eta) \Ep [ Z ] + t \} \leq \exp \{ -t^{2}/(3\sigma^{2}) \} +3\exp \{ -( t/(C\| M \|_{\psi_{\beta}}))^{\beta} \},
\]
where $C >0$ is a constant depending only on $\eta, \beta$. (ii) For every $\eta >0, s \geq 1$ and $t>0$,
\[
\Pr \{ Z \geq (1+\eta) \Ep[Z] + t \} \leq \exp\{ -t^2/(3\sigma^2)\}+C' \Ep [ M^{s}]/t^s, 
\]
where $C' >0 $ is a constant depending only on $\eta$ and $s$. 
\end{lemma}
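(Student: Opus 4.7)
The plan is to view $Z$ as the supremum of an empirical process over a finite class. Set $\mathcal{F} := \{\pm e_j : j = 1, \dots, d\}$, so that $Z = \sup_{f \in \mathcal{F}} |\sum_{i=1}^n \langle f, X_i \rangle|$, with envelope $M$ and weak variance at most $\sigma^2$. Both parts (i) and (ii) are then specializations of Talagrand-type concentration inequalities for unbounded empirical processes; I would derive them by combining Bousquet's concentration inequality (for bounded summands) with a truncation argument controlled by the envelope tail.

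The core step is a truncation at a level $U$ to be optimized. Split each coordinate $X_{ij}$ into its truncated part $X_{ij}\mathbf{1}\{|X_{ij}| \leq U\}$ (recentered) and a residual, and write $Z \leq Z^{\leq U} + \text{(residual)}$. For the truncated process, Bousquet's version of Talagrand's inequality gives a Bernstein-type tail
\[
\Pr\bigl(Z^{\leq U} \geq \Ep Z^{\leq U} + t\bigr) \leq \exp\!\bigl(-c t^2/(\sigma^2 + U t)\bigr)
\]
with a universal $c>0$. For the residual, if $M \leq U$ then the residual is zero; otherwise we invoke the envelope tail bound, which is $\Pr(M > U) \leq 2\exp(-(U/\|M\|_{\psi_\beta})^\beta)$ for part (i) and $\Pr(M > U) \leq \Ep M^s/U^s$ by Markov for part (ii). Choosing $U = c'(\eta,\beta)\cdot t$ makes the Bernstein denominator essentially $\sigma^2$, producing the $\exp(-t^2/(3\sigma^2))$ Gaussian tail, while leaving the envelope tail in exactly the stated forms ($3\exp(-(t/(C\|M\|_{\psi_\beta}))^\beta)$ for (i), $C'\Ep M^s/t^s$ for (ii)).

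The main obstacle, and the reason for the $(1+\eta)$ factor on the right-hand side, is the comparison of the two centerings: we need to absorb $\Ep Z^{\leq U} - \Ep Z$ into $\eta \Ep Z$. This is controlled by a standard moment-integration estimate on the tail contribution $\Ep[M\mathbf{1}\{M>U\}]$, which is small relative to $\Ep Z$ once $U$ is large enough (using Lemma \ref{lem: maximal} to ensure $\Ep Z \gtrsim \|M\|_{\psi_\beta}$ or $(\Ep M^s)^{1/s}$ up to logarithmic factors in the relevant regimes; in the complementary regime the claim is trivial because either $t^2/\sigma^2$ or the envelope term already dominates). Handling this trade-off cleanly for all $\eta > 0$ is the only delicate point; everything else is bookkeeping. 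Once (i) is established, part (ii) is obtained by repeating the same truncation scheme with the polynomial tail in place of the Orlicz tail, or equivalently by integrating the tail in (i) against appropriate weights after replacing the $\psi_\beta$-norm with the $L^s$-norm.
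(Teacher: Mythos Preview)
The paper does not prove this lemma at all: it simply cites Theorem~4 of \cite{A08} for part~(i) and Theorem~2 of \cite{A10} for part~(ii). Your proposal is therefore not a comparison with the paper's argument but an attempt to reprove Adamczak's results, and the overall architecture you describe (truncation plus Bousquet's inequality for the bounded part, envelope tail for the remainder) is indeed the one used there.

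That said, the two places you flag as ``bookkeeping'' are exactly where the argument is nontrivial, and your sketch handles both incorrectly. First, you invoke Lemma~\ref{lem: maximal} to obtain a \emph{lower} bound $\Ep Z\gtrsim\|M\|_{\psi_\beta}$; that lemma only gives an \emph{upper} bound on $\Ep Z$, so this step fails. The comparison of $\Ep Z^{\le U}$ with $\Ep Z$ in Adamczak's proof goes instead through L\'evy--Hoffmann--J{\o}rgensen type inequalities (yielding, after symmetrization, $\Ep M\lesssim\Ep Z$ and hence control of the recentering correction by a multiple of $\Ep Z$), not through any moment upper bound. Second, the claim that the choice $U=c'(\eta,\beta)\,t$ ``makes the Bernstein denominator essentially $\sigma^2$'' is false once $t$ is large relative to $\sigma$: with $U\asymp t$ the $Ut$ term dominates and Bousquet's bound degenerates to a constant, not $\exp(-t^2/(3\sigma^2))$. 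Adamczak resolves this by exploiting the $\eta\,\Ep Z$ slack together with the form of Bousquet's inequality in which the $U$-dependent linear term can be traded against $(1+\eta')\Ep Z'$; the truncation level is tied to $\Ep M$ (hence to $\Ep Z$ via the L\'evy inequality) rather than to $t$. Your case split (``the complementary regime is trivial'') does not rescue this, because in the regime $t\gg\sigma$ the Gaussian term is negligible but your Bousquet bound is still only $O(1)$, not bounded by the envelope term.

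In short: right framework, but the two delicate steps are precisely the content of \cite{A08,A10}, and your proposed shortcuts for them do not work.
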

\begin{proof}
See Theorem 4 in \cite{A08} for case (i) and Theorem 2 in \cite{A10} for case (ii).
\end{proof}

\begin{lemma}
\label{lem: maximal ineq nonnegative}
Let $X_1,\dots,X_n$ be independent random vectors in $\R^d$ with $d\geq 2$ such that $X_{ij} \geq 0$ for all $i=1,\dots,n$ and $j=1,\dots,d$. Define $Z := \max_{1 \leq j \leq d} \sum_{i=1}^{n} X_{ij}$ and $M := \max_{1 \leq i \leq n} \max_{1 \leq j \leq d}  X_{ij} $. Then
\[
\Ep [Z] \leq C \left( \max_{1\leq j\leq d}\Ep \left[ \sum_{i=1}^n X_{ij} \right]+\Ep [M] \log d \right),
\]
where $C>0$ is a universal constant. 
\end{lemma}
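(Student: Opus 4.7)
The plan is to combine symmetrization with a conditional sub-Gaussian tail bound for the associated Rademacher process, and then exploit non-negativity through the pointwise estimate $X_{ij}^2 \leq M X_{ij}$ to absorb a residual $\Ep Z$ back into the left-hand side. This detour is necessary because a direct appeal to Lemma~\ref{lem: maximal} applied to the centered variables $X_{ij}-\Ep X_{ij}$ would produce $\sqrt{\Ep M^2}\,\log d$ on the right, which is strictly weaker than the claimed $\Ep M\,\log d$.

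First I would bound
$$
Z \;\leq\; \max_{1\leq j\leq d}\sum_{i=1}^n \Ep X_{ij} \;+\; \max_{1\leq j\leq d}\biggl|\sum_{i=1}^n (X_{ij}-\Ep X_{ij})\biggr|
$$
and apply the standard symmetrization inequality for independent random vectors to obtain
$$
\Ep Z \;\leq\; a + 2\,\Ep \max_{1\leq j\leq d}\biggl|\sum_{i=1}^n \epsilon_i X_{ij}\biggr|,\qquad a := \max_{1\leq j\leq d}\Ep\sum_{i=1}^n X_{ij},
$$
where $\epsilon_1,\dots,\epsilon_n$ are i.i.d.\ Rademacher variables independent of $(X_i)$. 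Conditionally on $X$, the sum $\sum_i \epsilon_i X_{ij}$ is sub-Gaussian with variance proxy $v_j:=\sum_i X_{ij}^2$, so the classical maximum-of-sub-Gaussian bound applied to the $2d$ signed sums yields $\Ep_\epsilon \max_j|\sum_i \epsilon_i X_{ij}|\lesssim \sqrt{(\max_j v_j)\log d}$.

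Now non-negativity of $X_{ij}$ gives the pointwise bound $\max_j v_j \leq M\max_j\sum_i X_{ij} = MZ$. Taking the outer expectation and applying Cauchy--Schwarz,
$$
\Ep\max_{1\leq j\leq d}\biggl|\sum_{i=1}^n \epsilon_i X_{ij}\biggr| \;\lesssim\; \sqrt{\log d}\,\Ep\sqrt{MZ} \;\leq\; \sqrt{\log d\cdot \Ep M \cdot \Ep Z}.
$$
Plugging this into the previous display and writing $t:=C\sqrt{\log d\cdot \Ep M}$ and $x:=\sqrt{\Ep Z}$, one arrives at the quadratic inequality $x^2 \leq a + tx$; solving and using $\sqrt{u+v}\leq\sqrt u+\sqrt v$ gives $x\leq t+\sqrt a$, hence $\Ep Z \leq 2t^2 + 2a$, which is the claimed bound.

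The main obstacle is organizing the expectations so that only $\Ep M$, and not $\Ep M^2$, appears on the right. That is why the factor $\sqrt{MZ}$ must be extracted \emph{before} any integration: once Cauchy--Schwarz produces the product $(\Ep M)^{1/2}(\Ep Z)^{1/2}$, the Young-type absorption step becomes available. All remaining pieces of the argument (symmetrization, the sub-Gaussian maximum, and solving the quadratic) are entirely standard.
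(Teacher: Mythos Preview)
Your argument is correct. The symmetrization step $\Ep\max_j|\sum_i(X_{ij}-\Ep X_{ij})|\leq 2\,\Ep\max_j|\sum_i\epsilon_i X_{ij}|$ holds (via the usual decoupling $X_i-X_i'$ and triangle inequality), the conditional sub-Gaussian maximal inequality is standard, the pointwise bound $X_{ij}^2\leq M X_{ij}$ gives $\max_j v_j\leq MZ$, the Cauchy--Schwarz step $\Ep\sqrt{MZ}=\Ep[\sqrt M\,\sqrt Z]\leq(\Ep M)^{1/2}(\Ep Z)^{1/2}$ is legitimate, and the quadratic self-bounding closes the loop.

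The paper itself does not give a proof but simply refers to Lemma~9 of \cite{CCK15}; your write-up is essentially that argument spelled out. So there is no genuine difference in route to report --- you have reconstructed the cited proof rather than found an alternative one.
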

\begin{proof}
See Lemma 9 in \cite{CCK15}.
\end{proof}

\begin{lemma}
\label{lem: deviation ineq nonnegative}
Assume the setting of Lemma \ref{lem: maximal ineq nonnegative}.
(i) For every $\eta > 0, \beta \in (0,1]$ and $t > 0$, 
\[
\Pr \{ Z \geq (1+\eta) \Ep[Z] + t \} \leq 3 \exp \{ -( t/(C\| M \|_{\psi_{\beta}}))^{\beta} \},
\]
where $C > 0$ is a constant depending only on $\eta, \beta$. 
(ii) For every $\eta > 0, s \geq 1$ and $t > 0$, 
\[
\Pr \{ Z  \geq (1+\eta) \Ep[Z] + t \} \leq C'  \Ep[M^{s}]/t^{s},
\]
where $C' > 0$ is a constant depending only on $\eta,s$. 
\end{lemma}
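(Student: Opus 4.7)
The plan is to derive both parts of the lemma from the empirical-process concentration inequalities of \cite{A08} and \cite{A10}, by recognizing $Z$ as the supremum of a \emph{nonnegative} empirical process. Explicitly, I would write $Z = \sup_{f \in \mathcal F} \sum_{i=1}^n f(X_i)$ with $\mathcal F = \{x \mapsto x_j \colon 1 \leq j \leq d\}$, and note that every $f \in \mathcal F$ takes only nonnegative values. Theorem 4 of \cite{A08} (for part (i), with $\psi_\beta$-control on $M$) and Theorem 2 of \cite{A10} (for part (ii), with moment control on $M$) then apply and yield precisely the tail of the claimed form, without any residual sub-Gaussian summand; this is exactly the advantage that one gains over Lemma~\ref{lem: fuk-nagaev}, whose sub-Gaussian summand $\exp(-t^2/(3\sigma^2))$ is absent here.

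The mechanism by which that summand disappears in the nonnegative setting is the following. The weak variance $\sigma^2 := \sup_{f \in \mathcal F}\sum_i \Var_i(f)$ which enters any Bousquet/Klein--Rio-type inequality is controlled by
\[
\sigma^2 \leq \max_{1\leq j\leq d}\sum_{i=1}^n \Ep X_{ij}^2 \leq \Ep[M\cdot Z],
\]
since $X_{ij}^2 \leq M X_{ij}$. Coupling this with Lemma \ref{lem: maximal ineq nonnegative}, which bounds $\Ep Z$ by $\max_j \sum_i \Ep X_{ij} + C\Ep M \log d$, forces the Gaussian contribution $\exp(-t^2/(3\sigma^2))$ to be dominated by the Fuk--Nagaev-type contribution $\exp(-(t/C\|M\|_{\psi_\beta})^\beta)$ on the range of $t$ where the probability is nontrivial.

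Alternatively, one can give a more self-contained proof by centering: set $Y_{ij}:=X_{ij}-\Ep X_{ij}$, use the pointwise inequality
\[
Z - \Ep Z \;\leq\; \max_j \sum_i Y_{ij} \;\leq\; \max_j \Bigl|\sum_i Y_{ij}\Bigr|,
\]
which holds because $\Ep Z \geq \max_j \sum_i \Ep X_{ij}$, and apply Lemma \ref{lem: fuk-nagaev} to the centered maximum, then eliminate the resulting Gaussian summand via the variance bound above (using Hölder in the $L_s$ case and the straightforward $\sigma^2 \leq \|M\|_{\psi_\beta}\Ep Z$-type bound in the $\psi_\beta$ case). The main obstacle in either route is the bookkeeping required to track which of the two summands dominates in which regime of $t$, and to do it while keeping the constants in the exponent cleanly dependent only on $\eta,\beta$ (resp.\ $\eta,s$); this is exactly the Fuk--Nagaev-style argument carried out in detail in \cite{A08, A10}, so the most economical proof is simply to invoke those theorems after the empirical-process reformulation above.
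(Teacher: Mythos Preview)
The paper's ``proof'' is a one-line citation to Lemma~E.4 in \cite{CCK17}, so there is no in-paper argument to compare against. Your proposal is thus an attempt to supply an argument, and it has a genuine gap.

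The opening claim---that Theorem~4 of \cite{A08} and Theorem~2 of \cite{A10} applied to $\mathcal F=\{x\mapsto x_j\}$ already yield the one-term tail ``without any residual sub-Gaussian summand''---is false. Those theorems are for \emph{centered} empirical processes and always carry the Gaussian piece $\exp(-t^2/(c\sigma^2))$; that is precisely how the paper restates them in Lemma~\ref{lem: fuk-nagaev}. Eliminating that piece for nonnegative summands is the whole content of the lemma you are proving, not something Adamczak's results hand you for free.

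Your fallback route---center, apply Lemma~\ref{lem: fuk-nagaev}, then absorb the Gaussian term using $\sigma^2\le\Ep[MZ]$---also does not close as written. After the pointwise bound $Z-\Ep Z\le \max_j|\sum_i Y_{ij}|=:Z_c$, Lemma~\ref{lem: fuk-nagaev} is available only at thresholds of the form $(1+\eta')\Ep Z_c+s$. But $\Ep Z_c$ can be of the same order as $\Ep Z$ (not $\eta\Ep Z$), so for small $\eta$ and moderate $t$ you cannot even arrange $s>0$, let alone $s$ comparable to $t$. And the variance bound $\sigma^2\le\Ep[MZ]$ does not by itself let you dominate $\exp(-t^2/(3\sigma^2))$ by $\exp(-(t/C\|M\|_{\psi_\beta})^\beta)$ or by $C'\Ep[M^s]/t^s$: the quantity $\Ep[MZ]$ involves the correlated product and is not controlled by $\|M\|_{\psi_\beta}$ or $\|M\|_{L_s}$ alone, since $\Ep Z$ can be arbitrarily large relative to any norm of $M$. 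The ``bookkeeping'' you defer to \cite{A08,A10} is therefore not routine and is \emph{not} carried out there; it is the substantive step, and the actual proof in \cite{CCK17} handles it with additional arguments (truncation and a Bennett-type bound for the bounded part, exploiting $\sigma^2\le b\,\Ep Z$ after truncation at level $b$) beyond what you have sketched.
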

\begin{proof}
See Lemma E.4 in \cite{CCK17}.
\end{proof}


\end{document}